\theoremstyle{definition}
\newtheorem{theorem}{Theorem}[section]
\newtheorem{definition}[theorem]{Definition}
\newtheorem{lemma}[theorem]{Lemma}
\newtheorem{proposition}[theorem]{Proposition}
\newenvironment{scheme}[1]
  {\schemetmp}
  {\endschemetmp}
\crefname{enumi}{}{}
\newcommand{\ov}[1]{\overline{{#1}}}
\newcommand{\un}[1]{\underline{{#1}}}
\theoremstyle{remark}
\newtheorem*{remark}{Remark}
\newcommand*\diff{\mathop{}\!\mathrm{d}}
\newcommand*\discreteDiff{\boldsymbol{d}}
\newcommand{\norm}[1]{\left\lVert#1\right\rVert}
\newcommand{\weakto}{\rightharpoonup}
\newcommand{\weakstarto}{\overset{\ast}{\rightharpoonup}}
\newcommand{\project}{\mathcal{P}_h}
\newcommand{\projectL}{\mathcal{R}_h}
\newcommand{\projectV}{\mathcal{Q}_h}
\newcommand{\interpol}{\mathcal{I}_{h} }
\newcommand{\abs}[1]{\left \lvert #1 \right \rvert}
\DeclareMathOperator{\tr}{tr}
\newcommand{\W}[2]{W^{#1,#2}\left(\Omega \right)}
\renewcommand{\L}[1]{L^{#1} \left(\Omega \right)}
\renewcommand{\H}[1]{H^{#1} \left(\Omega \right) }
\newcommand{\qspace}[1]{H^{1}_0 (\Omega)}
\newcommand{\velspace}{H^2(\Omega)\cap \V }
\newcommand{\sobnorm}[3]{\left\lVert#1\right\rVert_{\W{#2}{#3}}  }
\newcommand{\Hsobnorm}[2]{\left\lVert#1\right\rVert_{\H{#2}}  }
\newcommand{\Hseminorm}[2]{\left \lVert #1\right \rVert_{\H{#2}}  }
\newcommand{\lebnorm}[2]{\left\lVert#1\right\rVert_{\L{#2}}  }
\newcommand{\LTwoNorm}[1]{\lebnorm{#1}{2}}
\newcommand{\hnorm}[1]{\left\lVert#1\right\rVert_{h}}
\DeclareMathOperator{\Rr}{\mathbb{R}}
\DeclareMathOperator{\Cont}{\mathcal{C}}
\DeclareMathOperator{\ra}{\rightarrow}
\newcommand{\de}{\text{d}}
\newcommand{\sym}{\text{sym}}
\newcommand{\skw}{\text{skw}}
\DeclareMathOperator{\BV}{{BV}([0,T])}
\newcommand{\fat}[1]{{\pmb{ #1}}}
\DeclareMathOperator{\di}{\nabla\cdot}
\newcommand{\tu}{\tilde{\fat u}}
\renewcommand{\t}{\partial_t}
\newcommand{\dd}{\tilde{\fat d}}
\newcommand{\vv}{\tilde{\fat v}}
\newcommand{\He}{H^1(\Omega; \mathbb{S}^2)}
\newcommand{\V}{H^1_{0,\sigma}(\Omega)}
\newcommand{\Ha}{{L}^2_{\sigma}(\Omega)}
\DeclareMathOperator{\inter}{\mathcal{I}_h}
\DeclareMathOperator{\interad}{\widetilde{\mathcal{I}}_h}
\newcommand{\E}{\mathcal{E}}
\newcommand{\expon}{\text{\textit{e}}}
\author{
Robert Lasarzik%
\thanks{Weierstrass Institute,
Mohrenstr. 39, 10117 Berlin, Germany,
\texttt{robert.lasarzik@wias-berlin.de}}, Maximilian E.V. Reiter\thanks{Technical University of Berlin,
Straße~des~17.~Juni~135,~10623~Berlin,~Germany,
\texttt{m.reiter@tu-berlin.de}}}
\date{\today}
\title%
{Analysis and numerical approximation of energy-variational solutions to the \\Ericksen--Leslie equations}
\begin{document}
\maketitle

\begin{abstract}
We define the concept of energy-variational solutions for the Ericksen--Leslie equations in three spatial dimensions. 
This solution concept is finer than dissipative solutions and satisfies the weak-strong uniqueness property. 
For a certain choice of the regularity weight, the existence of energy-variational solutions implies the existence of measure-valued solutions and for a different choice, 
we construct an energy-variational solution with the help of an implementable, structure-inheriting  space-time discretization. 
Computational studies are performed in order to provide some evidence of the applicability of the proposed algorithm.  
 \\
 \textit{MSC(2020):} 35A35, 35Q35, 65M60, 76A15
 \\
 \textit{Keywords:}
Existence,
liquid crystal,
Ericksen--Leslie,
energy-variational solutions,
numerical approximation,
unit-norm constraint,
mass-lumping,
finite element method%
\end{abstract}
\tableofcontents

\section{Introduction}
Liquid crystals comprise the structural properties of crystals within a fluid.
The fluid flow of the nematic phase of liquid crystals can be  described by  the Ericksen--Leslie system.
In this model, the material behaves like a liquid, \textit{i.e.,} no positional order is present, but the molecules exhibit a long-range self-alignment along a direction (see Figure \ref{fig:nematic_phase}). 
\begin{figure}[h]
     \centering
     \includegraphics[ width=\textwidth]{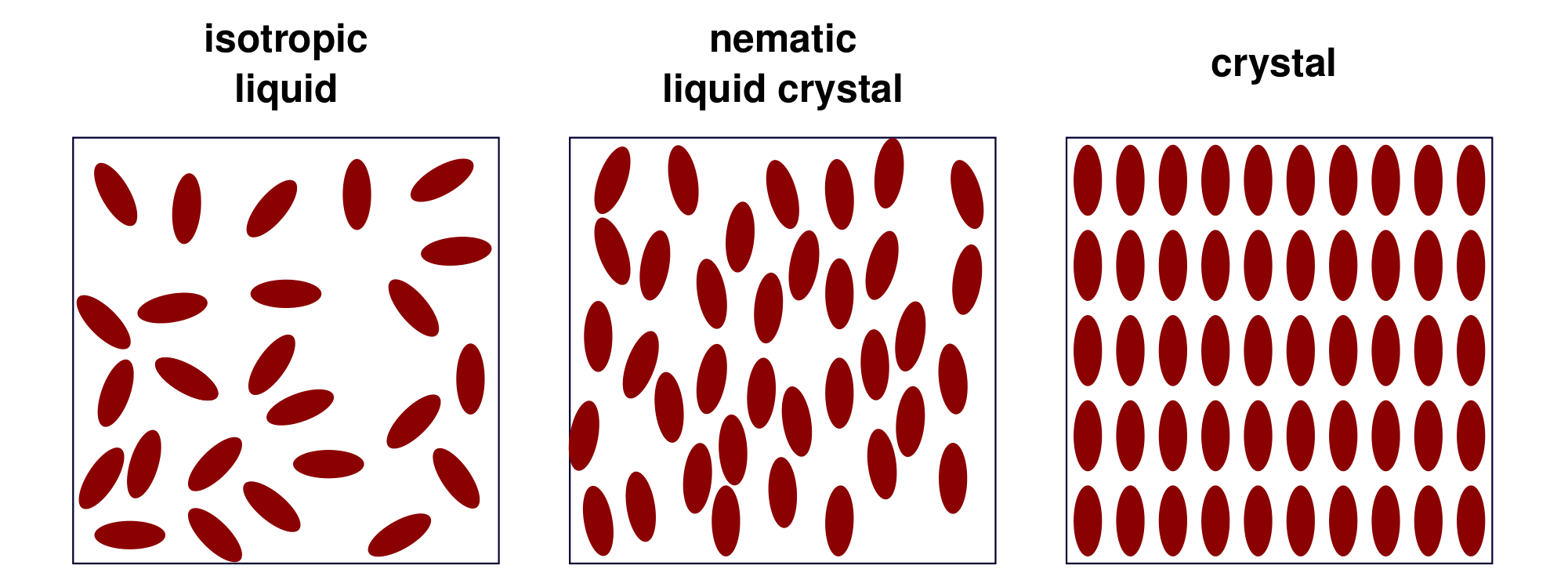}
     \caption{Alignment of the rod-like molecules in an isotropic liquid, the nematic phase of a liquid crystal, and a solid, {\cite[Fig. 1.1]{lasarzik_thesis}}}
     \label{fig:nematic_phase}
 \end{figure}
In this way, liquid crystals sustain anisotropic dynamics such as the polarization of light or the transfer of heat, but at the same time offer the physical flexibility of a fluid, which makes these materials interesting for engineering and sciences~\cite{bild}. 

Ericksen \cite{ericksen2} and Leslie \cite{Leslie1968} derived the Ericksen-Leslie equations during their development of an instationary theory of liquid crystals in the 1960s.
Let $\fat v: \overline{\Omega} \times [0,T] \to \mathbb{R}^3$ denote the velocity of the fluid,  $\fat p: \overline{\Omega} \times [0,T] \to \mathbb{R}$ its pressure and $\fat d: \overline{\Omega} \times [0,T] \to \mathbb{R}^3$ the director.
We consider the system governed by the equations
\begin{subequations}\label{system}
\begin{align}
\begin{split}\label{system_a}
    \partial_t \fat v + (\fat v \cdot \nabla) \fat v  + \nabla \fat p + \di \left ( (\nabla \fat d )^T\nabla \fat d  \right )   - \nabla \cdot \fat T^L &= 0, 
\end{split}
\\
\begin{split}\label{system_b}    
    \nabla \cdot \fat v &= 0, 
\end{split}
\\
\begin{split}\label{system_c}
    \partial_t \fat d + (\fat v \cdot \nabla) \fat d - (\nabla \fat v)_{skw} \fat d +\left ( I - \fat d \otimes \fat d \right ) \left ( \lambda (\nabla \fat v)_{sym} \fat d - \Delta \fat d \right )&=0, 
\end{split}
\\
\begin{split}\label{system_d}
\vert \fat d \vert & = 1 ,
\end{split}
\end{align}
\end{subequations}
where we employ the initial conditions
\begin{align*}
    \fat v(0) = \fat v_0, \quad \fat  d(0) = \fat d_0 \text{ with } \abs{\fat d_0} = 1 \text{ in } \Omega
\end{align*}
and boundary conditions
\begin{align*}
    \fat v = 0, \quad \fat d =\fat  d_{\Gamma}  \text{ with } \abs{\fat d_\Gamma} = 1 \text{ on } \partial \Omega .
\end{align*}
The Leslie stress tensor $T^L$ is defined by
\begin{equation*}
  \fat   T^L \coloneqq
     \fat T^D 
    + \lambda [\fat d \otimes [\fat d]_x^T  [\fat d]_x \cdot\Delta \fat  d]_{sym}
    + [\fat d \otimes\Delta\fat  d]_{skw},
\end{equation*}
where $T^D$ collects the dissipative terms of the Leslie stress tensor, \textit{i.e.}
\begin{align*}
    \fat   T^D \coloneqq
    & (\mu_1 + \lambda^2) (\fat d \cdot (\nabla \fat v)_{sym} \fat d) (\fat d \otimes \fat d)
    + \mu_4 (\nabla \fat v)_{sym} \\&
    +(\mu_5 + \mu_6 - \lambda^2) (\fat d \otimes (\nabla \fat v)_{sym}  \fat d)_{sym}
\end{align*}
with $ \mu_4 > 0, \quad \mu_5 + \mu_6 - \lambda^2\geq 0, \quad \mu_1 +\lambda^2 \geq 0$ in order to ensure the dissipative character of our model.

So far, a vast majority of the mathematical work on the Ericksen--Leslie model considers a simplified system with a relaxed unit-norm constraint that is only enforced approximately by adding a Ginzburg--Landau penalization term $\fat f_\epsilon (d) = \frac{1}{4\epsilon} (\abs{\fat d}^2-1)^2$ to the free energy potential $\frac{1}{2}\vert\nabla \fat d\vert^2$.
With simplified Leslie-stress tensor, the momentum and director equation~\eqref{system} are  replaced by
\begin{align}
\begin{aligned}\label{simplification}
    \begin{split}
    \partial_t \fat v - \frac{1}{Re}\Delta \fat v +(\fat v \cdot \nabla)\fat  v  + \nabla \fat p + \gamma \nabla \cdot \left((\nabla \fat d)^T \nabla \fat d\right) &= 0, \quad
    \di \fat v = 0 \,,
\end{split}\\[2ex]
\begin{split}
   \partial_t \fat d + (\fat v \cdot \nabla) \fat d - \Delta \fat d +  \fat f_\epsilon (\fat d) &=0,
\end{split}
\end{aligned}
\end{align} 
respectively. In the first analysis of this system~\cite{lin_and_liu}, the authors were
able to prove the existence of weak solutions to~\eqref{simplification}. 
A rather general model including the full Leslie stress tensor is considered by \cite{rocca}, where again a Ginzburg--Landau penalization approach is introduced to replace the unit-norm constraint of the director. In this setting the authors prove that weak solutions exist and a blow-up criterion for local strong solutions. 
In \cite{lasarzik-weak_solutions} the existence of weak solutions is generalized to a  larger class of free energy functions.
An overview of the analytical results regarding the Ericksen--Leslie equations and its connection to other models for liquid crystals can be found in \cite{sabine}.
In two spacial dimensions, for the limiting system of~\eqref{simplification}, where $ f_\varepsilon(\fat d)$ is replaced by $-\vert \nabla \fat d \vert^2$, it is known that a unique weak solution exists  that is smooth except for finitely many points in time~\cite{lin_liu_solutions_to_el}.

For a general model of the Ericksen--Leslie equations equipped with the naturally arising an\-iso\-tro\-pic Oseen--Frank energy, the concept of dissipative solutions is applied in \cite{lasarzik_dissipative_solutions}, which are shown to be the local average of measure-valued solutions~\cite{lasarzik_measure_valued} and inherit their weak-strong uniqueness~\cite{weakstrongmeas}. 
In comparison to measure-valued solutions, dissipative solutions have the advantage that they have less degrees of freedom and can be approximated by numerical schemes. Nevertheless, they form only a subset of measure-valued solutions and are thus not as precise. In the work at hand, we introduce energy-variational solutions (\textit{cf.}~\cite{lasarzik_incompressible_fluids}), which have one degree of freedom more than dissipative solutions and can be argued to be as fine as measure-valued solutions, but as we will show, they can also be approximated by numerical schemes and have additional advantages. 
This solution concept is useful where one might either not be able to derive weak solutions for physically relevant models or where weak solutions admit unphysical behaviour, like unphysical non-uniqueness~\cite{Isett}. The solution concept was first introduced in \cite{lasarzik_existence_2021}. 
We will prove that this concept is finer than the so-called dissipative solutions introduced by Lions~\cite{lions}  for the Euler equations and also applied to the Ericksen--Leslie equations by one of the authors~\cite{lasarzik_dissipative_solutions}. 
Additionally, in certain scenarios this concept is finer than the concept of measure-valued solutions. 
Energy-variational solutions do not only fulfill the standard weak-strong uniqueness property of generalized solution concepts (\textit{cf.}~\cite{weakstrongmeas}), but they also fulfill the semi-flow property such that prolongations and restrictions of solutions on larger, and smaller time intervals, respectively, are energy-variational solutions again. 
In~\cite{lasarzik_existence_2021} it was also argued that energy-variational solutions are amenable for different selection criteria. 
The set of energy-variational solutions can be seen as a convex, weakly$^*$-closed superset of weak solutions but a subset of dissipative solutions and for a special choice of the regularity weight also a subset of measure-valued solutions. This may allow to introduce techniques from optimization theory in order to select the physically relevant solution maximizing the dissipation in every point-in-time~\cite{lasarzik_existence_2021}. 

In this work, we define the concept of energy-variational solutions for the Ericksen–Leslie equations in three spatial dimensions.
This definition has some freedom, since it depends on the choice of a certain regularity weight~$\mathcal K$. For one choice of such a regularity weight, we prove the equivalence to a certain class of measure-valued solutions, and for another choice, we construct an energy-variational solution with the help of an implementable, structure-inheriting space-time discretization based on the finite element method.

Concerning the numerical approximation of the Ericksen--Leslie equations, a first  study for the simplified system \eqref{simplification} equipped with the Ginzburg--Landau approximation is conducted in \cite{Liu_Walkington}. They combined an implicit Euler scheme in time with Hermite type finite elements for the director and $Q2$-$Q1$-Taylor--Hood elements for the velocity and pressure.
In their subsequent work \cite{liuwalkingtonmixed}, the authors replace the  demanding Hermite finite elements for the director by piecewise quadratic functions. 
Even a relaxation from $C^1$ finite elements to $C^0$ finite elements is realized in \cite{lin_and_liu_num}.
A different approach is proposed in \cite{badia_saddle_point}, where  the Ginzburg--Landau approximation of the unit-norm constraint is interpreted as a saddle-point structure.
For the simplified and penalized Ericksen--Leslie system also decoupling techniques and mixed methods are examined in \cite{girault-gonzalez,cabralesFEM}.
In \cite{Becker_Prohl_2008} two numerical schemes for the simplified model are proposed. The first one uses the Ginzburg--Landau approximation for the unit-norm constraint and the second proposed scheme does not depend on a regularization parameter and fulfills the unit-norm constraint in the limit. However both schemes do not fulfill the unit-norm constraint at the discrete level exactly. We therefore use an approach from~\cite{lasarzik_main} by implementing a midpoint rule at the finite element level which solves the sphere constraint exactly at every node of the mesh. But we have to refine this approach by introducing a special projection (see Remark~\ref{rem:proj}), which will allow to identify the limit as an energy-variational solution. 
The proposed scheme is the first numerical scheme implementing the main properties of the continuous system including the algebraic norm restriction at every node of the mesh
such that the approximate solutions converge to an  energy-variational solution fulfilling the physically relevant semi-flow property. 
We think that the concept of energy-variational solutions is a strong tool for identifying limits of solutions to numerical schemes and can also serve as such in other related models. 

After providing the considered system and an overview over the existing literature, we
introduce the necessary notation  in Section~\ref{Sec:not}, we provide the definitions and the main results in Subsection~\ref{ch:2}. The weak-strong uniqueness proof and the relation to measure-valued solutions in considered in Subsection~\ref{sec:ana}, whereas the preliminaries are provided in the following subsections on auxiliary lemmata~\ref{sec:aux}, finite elements~\ref{sec:fem_spaces}, and interpolation~\ref{sec:interpol}.  The discrete system is introduced and analysed in Section~\ref{sec:dis}, we provide the necessary \textit{a priori} estimates~\ref{sec:apri}, extract converging subsequences~\ref{sec:extract}, and prove convergence to the director equations~\ref{sec:dir} and the energy-variational inequality~\ref{sec:envar}. Finally, some computational studies are presented in Section~\ref{sec:comp} in order to show some evidence of the applicability of the proposed algorithm.

\section{Preliminaries, main results, and continuous system\label{Sec:not}}
We denote the space of smooth solenoidal functions with compact support by $\mathcal{C}_{c,\sigma}^\infty(\Omega;\Rr^3)$. By $ L^p_{\sigma}( \Omega) $, $\V$,  and $  W^{1,p}_{0,\sigma}( \Omega)$, we denote the closure of $\mathcal{C}_{c,\sigma}^\infty(\Omega;\Rr^3)$ with respect to the norm of $ L^p(\Omega) $, $  H^1( \Omega) $, and $  W^{1,p}(\Omega)$, respectively. By $\He$, we denote the functions $ \fat d \in  H^1(\Omega)$ such that $ \vert\fat d \vert = 1$ a.e.~in $\Omega$. 
The $ L^2 (\Omega)$ inner product is thereby denoted by $(.,.)$.
The Dual space of a Banach space $V$ is denoted by $V^*$, where the dual pairing is denoted as $\langle \cdot , \cdot \rangle$.
In order to define the cross product, we use the Levi-Cita symbol. The cross product of two vectors $x,y \in \mathbb{R}^3$ is then defined as $(x \times y)_i = \sum_{j,k} \epsilon_{i,j,k} x_j y_k$, the cross product of a vector $x \in \mathbb{R}^3$ with a matrix $A \in \mathbb{R}^{d\times d}$ as $(x \times A)_{i,j} = \sum_{l,m} \epsilon_{i,l,m} x_l A_{m,j}$. We further will make use of the matrix notation $ $ of the cross product for three dimensions, \textit{i.e.} $([x]_{\times})_{ik}  =\sum_{j} \epsilon_{i,j,k} x_j $.
We further introduce the discrete derivative $\discreteDiff_t$ as $\discreteDiff_t f^j \coloneqq \frac{f^j -f^{j-1}}{k}$ for a constant time-step size $k>0$ and for a sequence in a normed space $(f_j)_{j=1,...,n}$, where we set $\discreteDiff_t f^0 = 0$.
Usually  functions in the continuous setting are denoted by bold letters in contrast to the discrete functions. 

By $  \mathbb{R}^{d\times d}$ we denote $d$-dimensional quadratic matrices, by $  \mathbb{R}^{d\times d}_{\sym}$ the symmetric subset, and by $  \mathbb{R}^{d\times d}_{\sym,+}$ the symmetric positive semi-definite matrices. 
The symmetric and skew-symmetric part of a matrix $A\in \Rr^{d\times d}$ are denoted by $(A)_{\sym} $ and $(A)_{\skw}$, respectively. The positive and negative semi-definite part of  a matrix $A\in \Rr^{d\times d}$ are denoted by $(A)_{+} $ and $(A)_{-}$, respectively.
We equip the last set with the usual spectral norm $\vert A\vert_2 = \max_{i\in\{ 1,\ldots,d\}} \lambda_i$, where $\lambda _i$ are the nonnegative eigenvalues of the matrix $A\in  \mathbb{R}^{d\times d}_{\sym,+}$. The dual norm of the spectral norm with respect to the Frobenius product ($A : B := \sum_{i,j=1}^d A_{ji}B_{ij}$ for $A,B\in \Rr^{d\times d}$) is given by $\vert A \vert '_2 = \sum_{i=1}^d \lambda _i = A:I = \tr(A)$ for a matrix $A \in  \mathbb{R}^{d\times d}_{\sym,+}$. 
The Radon measures taking values in  $\mathbb{R}^{d\times d}_{\sym}$ are denoted by $\mathcal{M}(\ov\Omega ; \mathbb{R}^{d\times d}_{\sym} ) $, which may be interpreted as the dual space of the continuous functions,\textit{i.e.,}
$\mathcal{M}(\ov\Omega; \mathbb{R}^{d\times d}_{\sym} )
= ( 
\Cont
(\ov 
\Omega; 
\mathbb{R}^{d\times d}_{\sym} ) )^{*}$.
Note that an element $\mu \in \mathcal{M}(\ov\Omega;  \mathbb{M}^{d\times d}_{\sym,+} ) $ is a Radon measure taking values in the symmetric matrices such that for any $\fat \xi \in\Rr^d$ the measure $ \fat \xi \otimes \fat \xi : \mu $ is nonnegative. 
By $I$, we denote the identity matrix in $\Rr^{d\times d}$.

For a given Banach space $\mathbb{X}$, the space $\Cont_w([0,T];\mathbb X )$ denotes the functions on $[0,T]$ taking values in $\mathbb X$ that are continuous with respect to the weak topology of $\mathbb X$. The space $L^\infty_{w^*} ([0,T];\mathbb X^*)$ is the space of all function  on $[0,T]$ taking values in $\mathbb X^*$ that are Bochner measurable with respect to $\mathbb X^*$ equipped with the weak-stark topology and essentially bounded.  
The total variation of a function $E:[0,T]\ra \Rr$ is given by 
$$ \vert E \vert_{\text{TV}([0,T])}= \sup_{0=t_0<\ldots <t_n=T} \sum_{k=1}^n \lvert E(t_{k-1})-E(t_k) \rvert\,, $$
where the supremum is taken over all finite partitions of the interval $[0,T]$. 
We denote the space of all bounded functions of bounded variations on $[0,T]$ by~$\BV$. 
Note that the total variation of a monotone decreasing nonnegative function $E$ only depends on the initial value, \textit{i.e.,}
\begin{align*}
\vert E\vert_{\text{TV}([0,T])} = \sup_{0=t_0<\ldots <t_n=T}\sum_{k=1}^N\lvert E(t_{k-1})-E(t_k)\rvert \leq E(0) - E(T) \leq E(0) \,.
\end{align*}

\subsection{Main results}\label{ch:2}
We define the total energy as
\begin{align}
\mathcal{E}(\fat v , \fat d) 
\coloneqq
 \frac{1}{2}\lVert \fat v\rVert_{L^2(\Omega)}^ 2 + \frac{1}{2}\LTwoNorm{\nabla \fat d}^2 \,.
\end{align}
The main new idea when defining energy-variational solutions is to introduce an auxiliary variable $E$ as an upper bound of the total energy $\E(\fat v, \fat d)$ and add the difference of these two variables to a variational inequality of the energy-dissipation mechanism to close this formulation with respect to the appropriate weak topologies.  The difference between the variable $E$ and the energy $\E(\fat v , \fat d)$ can be interpreted as a measure of the difference between weak and strong convergence of the approximate solutions. The associated error term represents this difference in the limit of vanishing discretization parameters. 

\begin{definition}[Energy-variational solutions]\label{def:envar}
We call  $(\fat v, \fat d , E )$ an energy-variational solution, if 
\begin{align}
\begin{split}
&\fat v \in \Cont_w([0,T];\Ha) \cap L^2(0,T;\V)\,, \quad E \in \BV\,,\\
&\fat d \in  \Cont_w([0,T];\He) \cap H^1(0,T;L^{3/2}(\Omega;\Rr^3)  
\text{ such that }
\\&
\fat d \times \Delta \fat d \in L^2(0,T;L^2(\Omega;\Rr^3))\,,
\end{split}
\label{reg}
\end{align}
and
$E \geq \mathcal{E} (\fat v,\fat d ) $ on $[0,T]$ as well as $ \vert \fat d  \vert =1$ a.e.~in $\Omega \times (0,T)$. The term $ \fat d \times \Delta \fat d$ has to be understood as the weak divergence of $\fat d \times \nabla \fat d $.  The solution fulfills the energy-variational inequality 
\begin{multline}
\left[  E -  \int_\Omega \fat v \cdot\vv  
 \diff x \right ] \Big \vert_{s-}^t + \int_s^t   \int_\Omega \fat v \cdot \t \vv 
  -  ( \fat v \cdot \nabla) \fat v \cdot  \vv 
  \diff x \diff \tau  + \int_s^t \langle \fat T^E(\fat d)  , \vv \rangle \de \tau 
\\
+\int_s^t \int_\Omega \left ((\mu_1 + \lambda^2) (\fat d \cdot (\nabla \fat v)_{sym} \fat d) (\fat d \otimes \fat d)
    + \mu_4 (\nabla \fat v)_{sym} 
  \right ):(\nabla \fat v - \nabla \vv) \diff x \diff \tau 
    \\+\int_s^t \int_\Omega
    (\mu_5 + \mu_6 - \lambda^2) (\fat d \otimes (\nabla \fat v)_{sym}  \fat d)_{sym}: (\nabla \fat v - \nabla \vv) 
+ \vert \fat d \times \Delta \fat d \vert^2 
     \diff x \diff \tau 
    \\
    + \int_s^t \int_\Omega \left ( \lambda [\fat d \otimes [\fat d]_x^T  [\fat d]_x \cdot\Delta \fat  d]_{sym}
    +  [\fat d \otimes \Delta \fat  d]_{skw}\right ) : \nabla \vv \diff x \diff \tau 
    \\
     + \int_s^t 
 \mathcal{K}( \vv ) \left [\mathcal{E} (\fat v,\fat d ) - E \right ]  \de \tau  \leq 0 \,. \label{envarform}
\end{multline}
 for all~$s$, $t\in (0,T)$ and for all $\vv \in \Cont^1([0,T]; (\V)^*) \cap L^2(0,T;\V)$ such that $ \mathcal{K}(\vv) \in L^1(0,T)$. The initial values are fulfilled in a weak sense $\fat v(0) = \fat v_0 $, $ \fat d(0) = \fat d_0$ and $\fat d$ fulfills the inhomogeneuous Dirichlet boundary conditions in the sense of the trace $ \tr(\fat d(t)) = \tr (\fat d_\Gamma)$ for a.e.~$t\in(0,T)$. 
Additionally, it holds 
\begin{align}
\t \fat d + ( \fat v \cdot \nabla ) \fat d - (\nabla \fat v )_{\skw}\fat d + (I - \fat d \otimes \fat d) \left ( \lambda (\nabla \fat v)_{\sym}\fat d  - \Delta \fat d  \right ) = 0 \label{weak:d}
\end{align}
a.e. in $\Omega \times (0,T)$. 
In this work, we  choose $ \mathcal{K}_1(\vv) = \frac{1}{2}\Vert \vv \Vert_{L^\infty(\Omega)}^2$ and $ \mathcal{K}_2(\vv)=2 \Vert( \nabla \vv)_{\sym,-} \Vert_{\Cont(\ov \Omega)}$, where $c_L$ is given in~\eqref{lump_estim_1}, with the associated Ericksen stresses being defined as $$ \langle \fat T_1^E (\fat d) , \vv \rangle =  \int_\Omega ( \fat d \times ( \vv \cdot \nabla ) \fat d) \cdot ( \fat d \times  \Delta \fat d)  \de x \text{ or } \langle \fat T_2^E(\fat d), \vv\rangle  = -\int_\Omega (\nabla \fat d^T \nabla \fat d ): \nabla \vv \de  x \,,$$
respectively.
\end{definition}
\begin{remark}[Ericksen stress]
The main obstacle when passing to the limit in an approximation of the Ericksen--Leslie equations is often the so-called Ericksen stress, which can be seen as a kind of Korteweg stress coupling the elastic deformations of the director field to the evolution of the velocity field. 
This term is the main reason that we have to consider a generalized solution framework different from usual weak solutions. 
Via Lemma~\ref{lem:EulerLagrange} we observe that both formulations $\fat T^E_1$ and $\fat T^E_2$ are formally equivalent via the reformulation
\begin{align*}
    -  \int_\Omega ( \vv \cdot \nabla ) \fat d\cdot  [\fat d ]_{x} ^T & [\fat d ]_{x} \Delta \fat d \de  x
={} -  \int_\Omega ( \vv \cdot \nabla ) \fat d\cdot   (\Delta \fat d + \vert \nabla \fat d \vert ^2 \fat d )  \de  x\\
= {}&  \int_\Omega (\nabla \fat d^T \nabla \fat d ): \nabla \vv \de  x + \frac{1}{2}\int_\Omega ( \vv \cdot \nabla )  \vert \nabla \fat d \vert^2 - \vert \nabla \fat d \vert ^2 ( \vv \cdot \nabla ) \vert  \fat d \vert^2 \de  x\,,
\end{align*} 
where the second term on the right-hand side vanish formally due to the fact that $\vv \in \V$ and $\vert \fat d  \vert =1$. 
Thus, in the case of a regular solution with $\fat d \in L^2(0,T;H^2(\Omega; \mathbb{S} ^2))$ and $E = \mathcal E(\fat v , \fat d )$ a.e. in $ (0,T)$ both solution concepts coincide with usual weak solutions. 
\end{remark}
\begin{remark}[Strong continuity of the initial value]
From the monotony of $t \mapsto E(t)$ and the weak continuity of $t \mapsto (\fat v(t) , \fat d(t))$, we infer by the lower semi-continuity of the energy functional and the inequality $E (t) \geq \mathcal{E}(\fat v(t), \fat d(t))$ for all $t\in [0,T]$ that for any sequence $t\searrow 0$ it holds
\begin{align*}
\mathcal{E}(\fat v_0,\fat d_0) = E (0) \geq \lim_{t\searrow 0}E(t) \geq \liminf_{t \searrow 0} \mathcal{E}(\fat v (t) , \fat d(t)) \geq  \mathcal{E}(\fat v _0 ,\fat d_0)\,.
\end{align*}
This implies that all inequalities are in-fact equations such that we infer from the uniform convexity of $L^2(\Omega)$ that 
\begin{multline*}
\left \{
\begin{matrix}
(\fat v(t), \fat d(t))  \rightharpoonup& (\fat v _0, \fat d_0) \text{ as }t\searrow 0\text{ in } \Ha\times L^2(\Omega;\Rr^3)\\
\mathcal{E}(\fat v (t) , \fat d(t) ) \longrightarrow  & \mathcal{E}(\fat v_0,\fat d_0) \text{ as } t\searrow 0 \text{ in } \Rr 
\end{matrix}
\right \}
\Longrightarrow
\\
(\fat v(t), \fat d(t))  \longrightarrow  (\fat v _0, \fat d_0) \text{ as }t\searrow 0\text{ in } \Ha\times L^2(\Omega;\Rr^3)\,.
\end{multline*}
\end{remark}
\begin{remark}[Properties of generalized solutions]
Energy-variational solutions fulfill several properties, which are desirable for generalized solution concepts. 
That generalized solutions coincide with classical solutions, if the latter exists, is the so-called weak-strong uniqueness property and covered by Theorem~\ref{thm:weakstrong}. 

Furthermore, energy-variational solutions fulfill the so-called semi-flow property such that restrictions and concatinations of solutions are solutions again, which is an important property of a reasonable solution concept (\textit{cf.}~\cite{lasarzik_incompressible_fluids}). 
\end{remark}
\begin{remark}[Different definition]
We could also identify $E (t) = E^1(t) + \frac{1}{2}\Vert \fat v(t) \Vert_{L^2(\Omega)}^2 $ for a.e. $t\in (0,T)$ such that $ E^1(t) \geq \frac{1}{2} \Vert \nabla \fat d(t) \Vert_{L^2(\Omega)}^2 $ for a.e. $t\in (0,T)$ in the inequality~\eqref{envarform}. 
This can be done due to the strong convergence of $\fat v$ a.e.~in $\Omega \times (0,T)$. 
We would also replace $ \E(\fat v, \fat d ) - E = \frac{1}{2}\Vert \nabla \fat d \Vert_{L^2(\Omega)}^2 - E^1$. 
The disadvantage would be that  the adapted inequality~\eqref{envarform} is only fulfilled for a.e.~$s \in(0,T)$ and all $t\in (s,T]$. 
\end{remark}

\begin{theorem}\label{main_result}
Let $\Omega \subset \Rr^3$ be a bounded convex  polyhedral domain. Let $\bigl(\fat v_0,\fat d_0\bigr)
 \in \V \times  H^{2}(\Omega;\Rr^3) $, with $\vert \fat d_0  \vert = 1$ a.e.~in $\Omega$ and $ \fat d _\Gamma  
 = \tr(\fat d_0)$. 
Then there exists an energy-variational solution in the sense of Definition~\ref{def:envar} with $\mathcal{K}(\vv) =\frac{1}{2} \Vert \vv \Vert_{L^\infty(\Omega)}^2$ such that $ E(0)= \E(\fat v_0,\fat d_0)$. 
\end{theorem}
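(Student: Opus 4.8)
The plan is to construct the energy-variational solution as a limit of the structure-preserving finite element scheme described in Section~\ref{sec:dis}, following the four-step program outlined in the introduction: a~priori estimates, extraction of subsequences, passage to the limit in the director equation~\eqref{weak:d}, and passage to the limit in the energy-variational inequality~\eqref{envarform} with $\mathcal K=\mathcal K_1$. First I would set up the fully discrete midpoint-in-time/finite-element-in-space scheme of~\cite{lasarzik_main}, using mass-lumping so that the pointwise constraint $\abs{\fat d}=1$ holds at every node, together with the special projection of Remark~\ref{rem:proj} that is needed to make the discrete Ericksen stress compatible with the form $\fat T_1^E$. Testing the discrete momentum equation with the discrete velocity and the discrete director equation with $\fat d\times(\fat d\times\Delta_h\fat d)$ (the discrete analogue of the term appearing in~\eqref{envarform}) and summing, the dissipative structure forced by $\mu_4>0$, $\mu_5+\mu_6-\lambda^2\ge 0$, $\mu_1+\lambda^2\ge 0$ yields the discrete energy inequality; this gives uniform bounds for $\fat v_h$ in $L^\infty(0,T;\Ha)\cap L^2(0,T;\V)$, for $\nabla\fat d_h$ in $L^\infty(0,T;L^2)$, and crucially for $\fat d_h\times\Delta_h\fat d_h$ in $L^2(0,T;L^2)$. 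From these and the director equation one extracts a bound on $\t\fat d_h$ in $L^2(0,T;L^{3/2})$ (using $\vert\nabla\fat v\vert\,\vert\fat d\vert$-type terms and Sobolev embedding in $3$D), which will be the source of the $H^1(0,T;L^{3/2})$ regularity in~\eqref{reg}.

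Next I would define the piecewise-constant and piecewise-affine time interpolants of the discrete solution and pass $h,k\to 0$. By the Banach--Alaoglu theorem and Aubin--Lions--Simon (compactness of $\fat d_h$ in, say, $L^2(0,T;L^2)$ via the $\t\fat d_h\in L^2(0,T;L^{3/2})$ bound and the $H^1$ spatial bound) one gets $\fat v_h\weakstarto\fat v$, $\fat d_h\weakto\fat d$ with $\fat d_h\to\fat d$ strongly and a.e., hence $\abs{\fat d}=1$ a.e.\ and the boundary/initial traces pass to the limit; the discrete energies $\mathcal E(\fat v_h,\fat d_h)(t)$ are monotone-ish up to the dissipation and bounded, so one can additionally extract $E_h\weakstarto E$ in $\BV$ using Helly's selection theorem on the (essentially monotone decreasing) discrete energy, obtaining $E\in\BV$ with $E\ge\mathcal E(\fat v,\fat d)$ by weak lower semicontinuity and $E(0)=\mathcal E(\fat v_0,\fat d_0)$ from the exact initialization. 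Passing to the limit in the discrete director equation to obtain~\eqref{weak:d} a.e.\ requires identifying the nonlinear terms: the transport term $(\fat v_h\cdot\nabla)\fat d_h$ and the products $(\fat d_h\otimes\fat d_h)(\ldots)$ are handled by the strong convergence of $\fat d_h$ plus the weak convergence of $\nabla\fat v_h$; the term $\Delta\fat d$ is recovered only in the combination $\fat d\times\Delta\fat d$, which is why the statement only asks for $\fat d\times\Delta\fat d\in L^2(0,T;L^2)$ and interprets it as the weak divergence of $\fat d\times\nabla\fat d$ — I would carry this identification via Lemma~\ref{lem:EulerLagrange} and the reformulation in the Remark after Definition~\ref{def:envar}.

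The main obstacle, and the step I would spend most effort on, is the passage to the limit in the energy-variational inequality~\eqref{envarform} itself. The point of the $E$-variable and of choosing $\mathcal K_1(\vv)=\tfrac12\norm{\vv}_{L^\infty(\Omega)}^2$ is precisely that the error between weak and strong convergence of $\fat v_h$ (the defect $\mathcal E(\fat v,\fat d)-E\le 0$ in the limit) can be absorbed into the term $\int_s^t\mathcal K_1(\vv)[\mathcal E(\fat v,\fat d)-E]\,\diff\tau$: writing the discrete energy balance tested against the discrete analogue of $\vv$ and rearranging, one produces a term of the form $\int\tfrac12\norm{\vv}_{L^\infty}^2\,(\mathcal E(\fat v_h,\fat d_h)-\langle\fat v_h,\vv\rangle+\tfrac12\norm{\vv}^2)$-type quadratic completion, where the sign condition and the $L^\infty$ bound on $\vv$ are exactly what is needed to keep the limsup on the correct side; the non-negative dissipative quadratic forms in $\nabla\fat v_h$ and the term $\vert\fat d_h\times\Delta_h\fat d_h\vert^2$ are lower semicontinuous, so they survive in the limit with the correct inequality direction. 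The consistency error of the mass-lumping and of the special projection (controlled by $c_L$ from~\eqref{lump_estim_1} and standard interpolation estimates from Subsections~\ref{sec:fem_spaces}--\ref{sec:interpol}) must be shown to vanish; handling the skew and symmetric Leslie-stress terms $[\fat d\otimes\Delta\fat d]_{\skw}$ and $\lambda[\fat d\otimes[\fat d]_x^T[\fat d]_x\cdot\Delta\fat d]_{\sym}$ under only the weak convergence of $\Delta_h\fat d_h$ requires rewriting them, as in the Remark, in terms of $\fat d_h\times\nabla\fat d_h$ and $\nabla\fat d_h^T\nabla\fat d_h$ so that strong $H^1$-type convergence of $\fat d_h$ can be used. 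Once all these terms are shown to converge (or to have the right-signed limsup), the discrete inequality passes to~\eqref{envarform} for a.e.\ $s,t$, and weak continuity of $\fat v,\fat d$ together with the $\BV$ regularity of $E$ upgrades it to all $s,t\in(0,T)$ with the one-sided limit $s-$.
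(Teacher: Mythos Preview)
Your outline matches the paper's proof essentially step for step: the discrete midpoint scheme with mass-lumping and the special projection $\projectL$, the discrete energy \emph{equality} giving the a~priori bounds, Helly for $E$, Aubin--Lions--Simon for $\fat d$, weak lower semicontinuity for the dissipative quadratic terms, and the quadratic-completion identity in Section~\ref{sec:envar} that packages $\hnorm{\fat d_h\times\Delta_h\fat d_h}^2+\mathcal K_1(\projectV\vv)\tfrac12\|\nabla\fat d_h\|^2$ minus the discrete Ericksen stress into a sum of nonnegative squares, each of which is weakly lower semicontinuous.

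One correction is worth making. You invoke ``strong $H^1$-type convergence of $\fat d_h$'' to pass to the limit in the Leslie coupling terms; this is not available (only $\nabla\fat d_h\weakstarto\nabla\fat d$ in $L^\infty_t L^2_x$) and not what the paper does. Instead, the paper rewrites both coupling terms nodewise in terms of $\fat d_h\times\Delta_h\fat d_h$, using the pointwise identity $[\fat d]_x^T[\fat d]_x(A)_{\skw}\fat d=(A)_{\skw}\fat d$ valid for $|\fat d|=1$, and then combines the weak $L^2(0,T;L^2)$ convergence of $\mathcal I_h(\fat d_h\times\Delta_h\fat d_h)$ with the strong $L^p$ convergence of $\fat d_h$ (Aubin--Lions plus the uniform $L^\infty$ bound). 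The reformulation via $\nabla\fat d^T\nabla\fat d$ you mention is the $\fat T_2^E$ variant and is not used in the convergence proof for $\mathcal K_1$.
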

We are going to prove the theorem by the convergence of a fully discrete, implementable, unconditionally solvable scheme in the following sections. 
The proposed scheme in Section~\ref{sec:dis} is a fully implementable numerical scheme for the Ericksen--Leslie system that fulfills the norm restriction $\vert \fat d \vert =1$ for every approximate solution in every node of the mesh. 
We even infer that the norm of the approximate solutions  converges to $1$ with the order $1$ (\textit{cf.} Proposition~\ref{prop:unitconv}). Note that the condition $ \fat d _\Gamma  
 = \tr(\fat d_0)$ is a usual compatibility condition for parabolic problems.

Energy-variational solutions can be seen as a generalized solution concept, which generalizes the well-known weak solutions, but is finer than the usual concept of so-called measure-valued solutions~\cite{lasarzik_measure_valued}. 
\begin{definition}\label{def:meas}
We call the tuple $( \fat v ,\fat d , \fat m)$ a measure-valued solution to the Ericksen--Leslie equations~\eqref{system}, if    $\fat v\in \Cont_w ([0,T]; \Ha) \cap L^2(0,T; \V) $, ${\fat d \in  \Cont_w ( [0,T]; \He) }$, as well as $ \fat m \in  L^\infty_{w^*}(0,T; \mathcal{M}(\ov\Omega;\Rr^{3\times 3}_{\sym,+}))  $, and if~\eqref{weak:d} is fulfilled as well as the measure-valued formulation of the Navier--Stokes equation 
\begin{align}
\begin{split}\label{meas:eq}
\int_0^T\Big[  \int_\Omega \fat T_L : ( \nabla \vv )_{\sym} -  ( \fat v  \otimes  \fat v  + \nabla \fat d ^T & \nabla \fat d   ) :\nabla  \vv 
\\&
- \fat v \cdot \t \vv \de  x - 2\int_{\ov\Omega} (\nabla \vv)_{\sym} : \de \fat m ( x)  \Big]  \de s = 0
\end{split}
\end{align}
for all $\vv \in \Cont^\infty_c(\Omega\times (0,T))$ and the energy-inequality 
\begin{align}
\begin{split}\label{meeas:en}
\frac{1}{2}\left ( \Vert \fat v \Vert_{L^2(\Omega)}^2 + \Vert \nabla \fat d \Vert_{L^2(\Omega)}^2 
 + \int_{\ov{\Omega}} I : \de \fat m(x)   \right ) & \Big  \vert_0^t 
\\ + \int_0^t \int_\Omega \fat T^D :& ( \nabla \fat v )_{\sym} + \vert \fat d \times \Delta \fat d \vert^2 \de  x \de s \leq 0
\end{split}
\end{align}
for a.e.~$t\in(0,T)$. 
\end{definition}

\begin{theorem}\label{thm:meas}
Let $(\fat v, \fat d , E ) $ be an energy-variational solution with ${\mathcal{K}(\vv)=2 \Vert (\nabla \vv)_{\sym,-}\Vert_{\Cont(\ov\Omega)}}$ in the sense of Definition~\ref{def:envar}  such that $E(0)=\E(\fat v_0,\fat d_0)$. 
Then there exists a measure-valued solution $( \fat v ,\fat d , \fat m)$ in the sense of Definition~\ref{def:meas} with  $$ \frac{1}{2} \langle \fat m , I \rangle =
  \frac{1}{2} \int_{\ov\Omega} I : \de \fat m = \frac{1}{2} \int_{\ov\Omega} \de \tr(\fat m)  \leq E - \mathcal{E}(\fat v , \fat d )\quad \text{a.e.~in }(0,T)\,.$$ 
\end{theorem}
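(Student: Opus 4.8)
The plan is to construct the defect measure $\fat m$ by a Hahn--Banach/Riesz-type argument applied to the gap functional $t \mapsto E(t) - \mathcal{E}(\fat v(t),\fat d(t))$, using the specific structure of the regularity weight $\mathcal{K}_2(\vv)=2\Vert(\nabla\vv)_{\sym,-}\Vert_{\Cont(\ov\Omega)}$. First I would observe that for any fixed symmetric-matrix-valued test function the quantity $\Vert(\nabla\vv)_{\sym,-}\Vert_{\Cont(\ov\Omega)}$ is exactly a support-functional evaluation: for a continuous field $A\colon\ov\Omega\to\Rr^{3\times3}_{\sym}$ one has $\Vert(A)_-\Vert_{\Cont(\ov\Omega)} = \sup\{ -\langle \nu, A\rangle : \nu\in\mathcal{M}(\ov\Omega;\Rr^{3\times3}_{\sym,+}),\ \langle\nu,I\rangle\le 1\}$, since the dual of the spectral norm restricted to positive semidefinite matrices is the trace (as recalled in Section~\ref{Sec:not}). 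This is the key identity that turns the $\mathcal{K}_2$-term in~\eqref{envarform} into an infimum over candidate defect measures.

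Next I would localize in time: since $E\in\BV$ is monotone and $t\mapsto\mathcal{E}(\fat v(t),\fat d(t))$ is lower semicontinuous (by weak continuity of $(\fat v,\fat d)$ and convexity of the energy), the gap $g(t):=E(t)-\mathcal{E}(\fat v(t),\fat d(t))\ge 0$ is bounded and measurable. For a.e.\ $t$ I want to produce $\fat m(t)\in\mathcal{M}(\ov\Omega;\Rr^{3\times3}_{\sym,+})$ with $\tfrac12\langle\fat m(t),I\rangle\le g(t)$ such that the measure-valued momentum balance~\eqref{meas:eq} and energy inequality~\eqref{meeas:en} hold. Concretely, I would take in~\eqref{envarform} the test function $\vv$ ranging over $\Cont^1_c$-in-time solenoidal fields, differentiate the resulting family of inequalities in $t$ (using the Lebesgue differentiation theorem for the $\BV$ and $L^1$ terms), and compare with the weak formulation one gets by testing the director equation~\eqref{weak:d} against appropriate quantities and using the Euler--Lagrange identity of Lemma~\ref{lem:EulerLagrange} to rewrite $\langle\fat T_2^E(\fat d),\vv\rangle=-\int_\Omega(\nabla\fat d^T\nabla\fat d):\nabla\vv\,\de x$ and the skew/symmetric Leslie contributions into the form appearing in~\eqref{meas:eq}. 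What remains after this bookkeeping is a linear functional $\vv\mapsto \ell_t(\nabla\vv)$ acting on $(\nabla\vv)_{\sym}$, bounded by $2 g(t)\,\Vert(\nabla\vv)_{\sym,-}\Vert_{\Cont(\ov\Omega)}$; by the support-functional identity above together with a Hahn--Banach extension from the subspace $\{(\nabla\vv)_{\sym}\}$ to all of $\Cont(\ov\Omega;\Rr^{3\times3}_{\sym})$, this functional is represented by $-2\,\de\fat m(t)$ for some $\fat m(t)\in\mathcal{M}(\ov\Omega;\Rr^{3\times3}_{\sym,+})$ with $\langle\fat m(t),I\rangle\le 2g(t)$, which is the asserted bound. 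Measurability of $t\mapsto\fat m(t)$ and the $L^\infty_{w^*}$ bound follow from the $L^1$-bound on $g$ and a measurable-selection argument.

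Finally, the energy inequality~\eqref{meeas:en} would be recovered by testing~\eqref{envarform} with $\vv=\fat v$ itself (admissible since $\fat v\in L^2(0,T;\V)$, and $\mathcal{K}_2(\fat v)\in L^1$ because $\fat v\in L^2(0,T;\V)\hookrightarrow$ does not immediately give $\Cont(\ov\Omega)$-control, so instead I would approximate $\fat v$ by smooth solenoidal fields and pass to the limit, or argue directly that the $\mathcal{K}_2$-term times the gap is integrable): this choice makes all the $\nabla\fat v-\nabla\vv$ brackets vanish, leaves exactly the dissipative Leslie terms $\fat T^D:(\nabla\fat v)_{\sym}$ and $\vert\fat d\times\Delta\fat d\vert^2$, and converts the boundary bracket $[E-\int\fat v\cdot\vv]$ into $E-\Vert\fat v\Vert_{L^2}^2$, which combined with $\tfrac12\langle\fat m,I\rangle\le E-\mathcal{E}(\fat v,\fat d)$ yields~\eqref{meeas:en} up to absorbing the gap into the defect measure.

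\emph{Main obstacle.} The delicate point is the differentiation-in-time step: extracting from the family of integrated inequalities~\eqref{envarform} (indexed by $s,t$ and by $\vv$) a \emph{pointwise-in-$t$} inequality with the correct constant $2g(t)$ in front of $\Vert(\nabla\vv)_{\sym,-}\Vert_{\Cont(\ov\Omega)}$, uniformly over $\vv$, so that the Hahn--Banach representation can be performed at a.e.\ fixed $t$ with a measurably-varying measure. One must be careful that the time-derivative terms $\int_\Omega\fat v\cdot\t\vv$ and the boundary term $[\,\cdot\,]\vert_{s-}^t$ combine correctly under $s\to t$, and that the Lebesgue points of all the $L^1$-in-time integrands can be chosen simultaneously; handling the one-sided limit $s-$ coming from the $\BV$ function $E$ is where the monotonicity of $E$ is essential.
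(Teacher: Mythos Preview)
Your overall architecture (Hahn--Banach plus Riesz to represent the defect as a nonnegative symmetric matrix measure) matches the paper, but two concrete steps are off and one key simplification is missing.

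\textbf{Energy inequality.} Testing~\eqref{envarform} with $\vv=\fat v$ is not admissible: $\mathcal{K}_2(\fat v)=2\Vert(\nabla\fat v)_{\sym,-}\Vert_{\Cont(\ov\Omega)}$ requires $\nabla\fat v\in\Cont(\ov\Omega)$, which $\fat v\in L^2(0,T;\V)$ does not give, and approximating $\fat v$ by smooth fields will not make the $\mathcal{K}_2$-term disappear in the limit. The correct choice is simply $\vv=0$: then $\mathcal{K}_2(0)=0$, the $(\nabla\fat v-\nabla\vv)$ brackets become $\nabla\fat v$, and~\eqref{envarform} reduces directly to $E\vert_0^t+\int_0^t\int_\Omega\fat T^D{:}(\nabla\fat v)_{\sym}+\vert\fat d\times\Delta\fat d\vert^2\le 0$, which together with the trace bound on $\fat m$ gives~\eqref{meeas:en}.

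\textbf{Isolating the linear part.} The ``bookkeeping'' you describe must remove the $\vv$-independent terms ($E$, $\fat T^D{:}\nabla\fat v$, $\vert\fat d\times\Delta\fat d\vert^2$) from~\eqref{envarform}. The paper does this by a scaling trick: set $\vv=\alpha\tu$, divide by $\alpha$, and send $\alpha\to\infty$. Since all terms are either degree~0 or degree~1 in $\vv$ and $\mathcal{K}_2$ is positively homogeneous, the degree-0 terms vanish and one is left with a genuinely linear-in-$\tu$ functional bounded by $2\Vert(\nabla\tu)_{\sym,-}\Vert_{\Cont(\ov\Omega)}[E-\mathcal{E}(\fat v,\fat d)]$ \emph{in integrated form over $(s,t)$}. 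No differentiation in $t$ is needed.

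\textbf{Space--time vs.\ pointwise Hahn--Banach.} Your plan to perform Hahn--Banach at a.e.\ fixed $t$ and then invoke measurable selection is the hard road, and the ``main obstacle'' you flag (simultaneous Lebesgue points, one-sided limits of $E$) is real. The paper avoids it entirely: it views the defect functional $\fat l$ as an element of $L^\infty_{w^*}(0,T;(\Cont^1_0(\Omega))^*)$, applies Hahn--Banach once on the space--time functional (extending from the range of $\nabla$ in $L^1(0,T;\Cont(\ov\Omega;\Rr^{3\times3}))$), and gets $\fat m\in L^\infty_{w^*}(0,T;\mathcal{M}(\ov\Omega;\Rr^{3\times3}))$ directly. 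Symmetry, positive semidefiniteness, and the trace bound then follow by plugging $\fat A=\fat\xi\otimes\fat\xi\,\phi$ and $\fat A=\phi I$ into the extended inequality.
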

\begin{theorem}\label{thm:weakstrong}
Let  $(\fat v, \fat d , E )  
$ be an energy-variational solution in the sense of Definition~\ref{def:envar} with $E(0) = \mathcal{E}(\fat v_0, \fat d_0)$ and let $(\vv, \dd) $ fulfill
\begin{subequations}\label{regvd}
\begin{align}
\begin{split}\label{reg:v}
\vv \in{}& L^2(0,T;\V\cap L^\infty(\Omega)\cap W^{1,3}(\Omega)) \cap L^1(0,T;\Cont^1(\ov \Omega))
\\ & \quad\quad\quad\quad\quad\quad\quad\quad\quad\quad\quad\quad\quad\quad
\cap H^1(0,T;(\V)^*) 
\end{split}
\\
\dd  \in{}& L^1(0,T;W^{2,\infty}(\Omega) ) \cap L^2(0,T;W^{3,3}(\Omega)) \cap H^1(0,T;H^1(\Omega))\label{reg:d} \,
\end{align}
\end{subequations}
such that $\mathcal{K}(\vv) \in L^1(0,T)$.
Then the relative energy inequality
\begin{align}
\begin{split}
 \frac{1}{2}&\left [  \Vert \fat v (t)- \vv(t) \Vert_{L^2(\Omega)}^2 + \Vert \nabla \fat d(t) - \nabla \dd(t) \Vert_{L^2(\Omega)}^2  \right ]
\\
+&\frac{1}{2}\int_0^t \int_\Omega (\mu_1 + \lambda^2) (\fat d \cdot (\nabla \fat v- \nabla \vv)_{sym} \fat d) ^2
    + \mu_4 \vert (\nabla \fat v-\nabla \vv)_{sym} \vert^2
  \diff x \expon^{\int_s^t \mathcal{Q}(\vv,\dd)\de \tau  }\diff s 
    \\+&\frac{1}{2}\int_0^t \int_\Omega
    (\mu_5 + \mu_6 - \lambda^2) \vert (\nabla \fat v-\nabla \vv)_{sym}  \fat d\vert^2 
+ \vert \fat d \times \Delta (\fat d - \dd) \vert^2 
     \diff x  \expon^{\int_s^t \mathcal{Q}(\vv,\dd)\de \tau  }\diff s 
     \\
     +&\int_0^t \left \langle 
      \t \vv + ( \vv \cdot\nabla) \vv + \di ( \nabla \dd^T \nabla \dd) - \di \tilde{\fat T}^L , \fat v -\vv 
     \right \rangle \expon^{\int_s^t \mathcal{Q}(\vv,\dd)\de \tau  }\diff s 
     \\
     +&\int_0^t \left \langle 
    \nabla \left (  \t \dd +(\vv \cdot \nabla) \dd - ( \nabla \vv )_{\skw} \dd 
    \right ) ,\nabla (\fat d - \dd)
     \right \rangle \expon^{\int_s^t \mathcal{Q}(\vv,\dd)\de \tau  }\diff s 
     \\
     +&\int_0^t \left \langle 
    \nabla \left (  
    ( I - \dd \otimes \dd )( \lambda (\nabla\vv)_{\sym} - \Delta \dd ) \right ) ,\nabla (\fat d - \dd)
     \right \rangle \expon^{\int_s^t \mathcal{Q}(\vv,\dd)\de \tau  }\diff s 
     \\
     \leq& 
     \frac{1}{2}\left [  \Vert \fat v _0- \vv(0) \Vert_{L^2(\Omega)}^2 + \Vert \nabla \fat d_0 - \nabla \dd(0) \Vert_{L^2(\Omega)}^2  \right ]
      \expon^{\int_0^t \mathcal{Q}(\vv,\dd)\de s  }
   \,, 
   \end{split}
\label{enineq}
\end{align}
holds for a.e.~$t\in(0,T)$,
where  
\begin{align*}
\mathcal{Q}(\vv,\dd) :=&  \Vert \vv \Vert_{L^\infty(\Omega)} ^2 + \Vert \nabla \vv \Vert_{L^\infty(\Omega)} + \Vert \Delta  \dd \Vert_{L^3(\Omega)}^2+\Vert\nabla \vv \Vert_{L^3(\Omega)}^2\\&+ \Vert \Delta \dd \Vert_{L^\infty(\Omega)}+\Vert  \Delta \dd \Vert_{W^{1,3}(\Omega)}^2+\mathcal{K}(\vv)\,.
\end{align*} 
Additionally, if $(\vv,\dd)$ is
  a classical solution to the Ericksen--Leslie system on $(0,T^*)$, then both solutions coincide on this time-interval, \textit{i.e.},
\begin{align*}
\fat v \equiv \vv \qquad \fat d \equiv \dd \quad\text{ a.e.~on~}(0,T^*)\,.
\end{align*}
\end{theorem}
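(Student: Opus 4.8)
The plan is to establish the relative energy inequality~\eqref{enineq} by the relative‑energy (weak--strong) method and then to read off uniqueness from it. The quantity I want to estimate is $\mathcal R(t):=\tfrac12\LTwoNorm{\fat v(t)-\vv(t)}^2+\tfrac12\LTwoNorm{\nabla\fat d(t)-\nabla\dd(t)}^2$, which I would control through the \emph{shifted} relative energy
\[
\mathcal R_E(t):=E(t)-(\fat v(t),\vv(t))+\tfrac12\LTwoNorm{\vv(t)}^2+\tfrac12\LTwoNorm{\nabla\dd(t)}^2-(\nabla\fat d(t),\nabla\dd(t)),
\]
for which $\mathcal R_E(t)-\mathcal R(t)=E(t)-\E(\fat v(t),\fat d(t))\ge 0$ on $[0,T]$ and, thanks to the normalisation $E(0)=\E(\fat v_0,\fat d_0)$, $\mathcal R_E(0)=\mathcal R(0)=\tfrac12\LTwoNorm{\fat v_0-\vv(0)}^2+\tfrac12\LTwoNorm{\nabla\fat d_0-\nabla\dd(0)}^2$. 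The target is an integral inequality
\[
\mathcal R_E(t)-\mathcal R_E(s)+\int_s^t(\text{quadratic dissipation in the differences})\,\de\tau+\int_s^t(\text{three residual pairings})\,\de\tau\le\int_s^t\mathcal Q(\vv,\dd)\,\mathcal R_E\,\de\tau,
\]
from which~\eqref{enineq} follows by the integral (variation‑of‑constants) form of Gronwall's lemma with $s=0$, after bounding $\mathcal R(t)\le\mathcal R_E(t)$ in the leading term and identifying the three residual integrands with the defect terms on the left of~\eqref{enineq}.

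First I would insert the given $\vv$ as a test function in~\eqref{envarform} (admissible since $\mathcal K(\vv)\in L^1(0,T)$ and $\vv$ satisfies the regularity~\eqref{regvd}) and add to it the elementary identities $\tfrac12\LTwoNorm{\vv}^2\big\vert_s^t=\int_s^t\langle\t\vv,\vv\rangle\,\de\tau$, $\tfrac12\LTwoNorm{\nabla\dd}^2\big\vert_s^t=\int_s^t(\nabla\t\dd,\nabla\dd)\,\de\tau$ and the product rule $(\nabla\fat d,\nabla\dd)\big\vert_s^t=\int_s^t[-\langle\t\fat d,\Delta\dd\rangle+(\nabla\fat d,\nabla\t\dd)]\,\de\tau$; the last is justified by $\t\fat d\in L^2(0,T;L^{3/2})$, $\Delta\dd\in L^2(0,T;L^3)\cap L^1(0,T;L^\infty)$, $\nabla\fat d\in L^\infty(0,T;L^2)$ and $\nabla\t\dd\in L^2(0,T;L^2)$, together with a regularisation in time to cope with the merely $\BV$‑regularity of $E$, whose monotone decrease makes the jump contributions carry the correct sign. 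Into $\langle\t\fat d,\Delta\dd\rangle$ I would substitute the director equation~\eqref{weak:d}, and into the pairings involving $\t\vv$ and $\nabla\t\dd$ the Ericksen--Leslie residuals of $(\vv,\dd)$, so that the three defect expressions of~\eqref{enineq} appear exactly.

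Next I would complete squares: the dissipative contributions of~\eqref{envarform} (the $\mu_1+\lambda^2$, $\mu_4$ and $\mu_5+\mu_6-\lambda^2$ terms tested against $\nabla\fat v-\nabla\vv$, together with $\vert\fat d\times\Delta\fat d\vert^2$) combine with the matching quadratic expressions in $\vv,\dd$ coming from the residual pairings and from~\eqref{weak:d} into the squares on the left of~\eqref{enineq}, namely $\vert(\nabla\fat v-\nabla\vv)_{\sym}\vert^2$, $\vert(\nabla\fat v-\nabla\vv)_{\sym}\fat d\vert^2$, $(\fat d\cdot(\nabla\fat v-\nabla\vv)_{\sym}\fat d)^2$ and $\vert\fat d\times\Delta(\fat d-\dd)\vert^2$; the $\fat d\times\Delta\fat d$‑related cross terms are assembled from the elastic part $(I-\fat d\otimes\fat d)\Delta\fat d$ of~\eqref{weak:d}, from $\langle\fat T^E(\fat d),\vv\rangle$ and from the director residual of $(\vv,\dd)$, using the identities of Lemma~\ref{lem:EulerLagrange} — in particular $-(I-\fat d\otimes\fat d)\Delta\fat d=\fat d\times(\fat d\times\Delta\fat d)$ and the two equivalent forms of $\fat T^E$ — together with $\vert\fat d\vert=1$; since $\dd$ is not assumed of unit norm, the square that survives is $\vert\fat d\times\Delta(\fat d-\dd)\vert^2$ with $\fat d$ as the leading factor. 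The remaining lower‑order terms — products of $\fat v-\vv$, $\nabla(\fat d-\dd)$ and $\fat d-\dd$ with coefficients built from $\vv$ and $\dd$ — I would bound, using Hölder's inequality, the Sobolev embeddings in $\Omega\subset\Rr^3$, Young's inequality and $\vert\fat d\vert=1$, by $C\,\mathcal Q(\vv,\dd)\,\mathcal R$ plus an arbitrarily small multiple of the quadratic dissipation, absorbed on the left; the norms collected in $\mathcal Q(\vv,\dd)$ are exactly those these estimates require, which forces the regularity classes~\eqref{regvd}. Using $\mathcal R\le\mathcal R_E$ this yields the stated integral inequality and hence~\eqref{enineq}. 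For the last claim, a classical solution $(\vv,\dd)$ on $(0,T^*)$ has the regularity~\eqref{regvd}, its three Ericksen--Leslie residuals vanish identically, and $\vv(0)=\fat v_0$, $\dd(0)=\fat d_0$, so the right‑hand side of~\eqref{enineq} vanishes; hence $\mathcal R\equiv0$ a.e.\ on $(0,T^*)$, which forces $\fat v\equiv\vv$ and $\nabla\fat d\equiv\nabla\dd$ a.e.\ on $(0,T^*)$, and then $\fat d\equiv\dd$ by Poincaré's inequality and the shared trace.

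The hard part will be the square‑completion step and the role of the regularity weight $\mathcal K$. The Leslie‑stress terms in~\eqref{envarform} are tested against $\nabla\vv$ or $\nabla\fat v-\nabla\vv$ rather than against symmetric difference quantities, so producing \emph{exact} squares — and in particular matching $\vert\fat d\times\Delta\fat d\vert^2$ simultaneously with $\langle\fat T^E(\fat d),\vv\rangle$ and with the $(I-\fat d\otimes\fat d)\Delta\fat d$ contribution of~\eqref{weak:d} — requires careful bookkeeping of the identities of Lemma~\ref{lem:EulerLagrange} and of $\vert\fat d\vert=1$, all while never using $\vert\dd\vert=1$. Moreover, after this completion a residual of the Ericksen‑stress coupling survives that is controllable only by $\mathcal K(\vv)$ times an energy‑type quantity — with $\mathcal K_1=\tfrac12\Vert\vv\Vert_{L^\infty(\Omega)}^2$ matched to $\fat T^E_1$ and $\mathcal K_2=2\Vert(\nabla\vv)_{\sym,-}\Vert_{\Cont(\ov\Omega)}$ matched to $\fat T^E_2$, the negative‑part structure of $\mathcal K_2$ exploiting that $\nabla\fat d^T\nabla\fat d$ is positive semidefinite together with the spectral/trace duality recalled in Section~\ref{Sec:not} — and this term closes only because the relaxation term $\mathcal K(\vv)[\E(\fat v,\fat d)-E]$ is present in~\eqref{envarform} and $\mathcal K(\vv)\le\mathcal Q(\vv,\dd)$ with $\mathcal R_E\ge\mathcal R$. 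A secondary technical point is the low‑regularity time integration by parts in the first step ($\t\fat d\in L^2(0,T;L^{3/2})$, $E$ of bounded variation only) and checking that every product in the remainder estimate makes sense under~\eqref{regvd}.
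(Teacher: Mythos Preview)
Your proposal is correct and follows essentially the same relative-energy strategy as the paper: insert $\vv$ into~\eqref{envarform}, test~\eqref{weak:d} with $\Delta\dd$, add the strong-solution residual identities for $(\vv,\dd)$, complete the dissipative squares, estimate the remainder by $\mathcal Q(\vv,\dd)\,\mathcal R_E$, and apply Gronwall. The paper organises the same computation slightly differently---it writes out the tested strong equations explicitly as one block~\eqref{tested:strong} rather than phrasing them as residual substitutions, and it offloads most of the Ericksen-stress algebra and the H\"older/Young remainder estimates to the reference~\cite{weakstrongmeas}---but the ingredients, the square-completion, the role of the $\mathcal K(\vv)[\E-E]$ term, and the final Gronwall step are the same.
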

\begin{remark}[Dissipative solutions]
Theorem~\ref{thm:weakstrong} does not only show the weak-strong uniqueness of energy-variational solutions, it also shows that every energy-variational solution is a dissipative solution. A dissipative solution is defined via the relative-energy inequality~\eqref{enineq}, \textit{i.e.,} $(\fat v , \fat d)$ is called a dissipative solution, if it fulfills the relative energy-inequality~\ref{enineq} for all function~$(\vv,\dd)$ fulfilling the regularity properties~\eqref{regvd}. The concept of energy-variational solutions is thus strictly finer than the concept of dissipative solutions. 
\end{remark}
\subsection{Proofs for the continuous system\label{sec:ana}}
\begin{proof}[Proof of Theorem~\ref{thm:meas}]
Let $(\fat v , \fat d , E)$ be an energy-variational solution according to Definition~\ref{def:envar} with $ \mathcal{K}(\vv) =2 \Vert \nabla \vv \Vert_{\Cont(\ov\Omega)}$.
Then, we may choose $\vv = 0$ in order to infer
\begin{align}
E \Big\vert_0^t + \int_0^t \int_\Omega (\mu_1+\lambda^2) (\fat d \cdot (\nabla \fat v )_{\sym} \fat d )^2 + \mu_4 \vert (\nabla \fat v )_{\sym}\vert^2 \diff  x \diff \tau &\notag \\
+\int_0^t \int_\Omega  (\mu_5+\mu_6 -\lambda^2)\vert (\nabla \fat v )_{\sym} \fat d \vert^2 + \vert \fat d \times \Delta \fat d \vert^2 \diff  x \diff \tau &{}\leq 0 \,.\label{enneu}
\end{align}
Furthermore, we may choose $ \vv = \alpha \tu$ with $\alpha \geq 0$ 
and multiply the energy-variational inequality~\eqref{envarform} by $1/\alpha$. 
In the limit $\alpha \ra \infty$, we find
\begin{multline*}
- \int_\Omega \fat v \cdot \tu \diff x \Big\vert_0^t + \int_0^t\int_\Omega \fat v \cdot \t \tu +  \left[ \fat v \otimes \fat v + \nabla \fat d^T \nabla \fat d - \fat T_L \right] : \nabla \tu \diff x\diff s \\  +\int_0^t 2\Vert(\nabla \tu)_{\sym,-} \Vert_{\Cont(\ov\Omega;\Rr^{3\times 3})} \left [\mathcal{E}(\fat v , \fat d ) - E  \right ]  \diff s \leq 0 \,
\end{multline*}
for all  $\tu \in \Cont^1([0,T])\otimes \Cont^1_0 (\Omega)$.
By the definition of the weak time derivative, we infer that
\begin{align*}
-\int_0^T  \langle \t \fat v ,{}& \tu \rangle\de t  = - \int_\Omega \fat v \cdot \tu \diff x \Big\vert_0^T+  \int_0^T \int_\Omega \fat v \cdot \t \tu \diff x \diff t \\
\leq{}&  \left ( \Vert \fat v \Vert_{L^\infty(0,T;L^2(\Omega))}^2 + \Vert \nabla \fat d \Vert_{L^\infty(0,T;L^2(\Omega))}^2 \right )  \Vert \nabla \tu\Vert_{L^1(0,T;\Cont(\ov\Omega))} 
\\+& 2\Vert\nabla \tu \Vert_{L^1(0,T;\Cont(\ov\Omega))} \left \Vert E- \mathcal{E}(\fat v , \fat d )   \right \Vert_{L^\infty(0,T)}   + \Vert \fat T_L \Vert_{L^2(0,T;L^2(\Omega ))} \Vert \nabla \tu \Vert _{L^2(0,T;L^2(\Omega))}\,.
\end{align*}
First this is only well defined for $\tu \in \Cont^1([0,T])\otimes \Cont^1_0 (\Omega)$. 
Since this is a linear subspace of the Banach space $L^1(0,T;\Cont^1_0(\Omega)) \cap L^2(0,T;H^1(\Omega))$, we infer from the Hahn--Banach theorem~\cite[Thm.~1.1]{brezis} the existence of an element
$$ \t \fat v \in \left ( L^1(0,T;\Cont^1_0(\Omega))\cap L^2(0,T;H^1(\Omega))\right )^* = L^\infty_{w^*} (0,T; (\Cont^1_0 (\Omega))^*) + L^2(0,T;W^{-1,2} (\Omega)) \,.$$
This allows to write the  linear form 
\begin{multline}
\langle \fat l(t) , \tu(t) \rangle_{
L^\infty_{w^*}(0,T;
(\Cont^1_0(\Omega))^*
),L^1(0,T;
\Cont^1_0(\Omega)
)
} : ={}
\\- 
\int_0^T 
\langle \t\fat v(t) , \tu(t) \rangle 
\diff  t 
+ 
\int_0^T
 \int_\Omega \left [ \fat v(t) \otimes \fat v(t) + \nabla \fat d ^T(t) \nabla \fat d(t)- \fat T_L(t)\right  ] : \nabla \tu(t) \diff x  \diff t
  \\ \leq{}  2 \int_0^T\Vert (\nabla \tu (t))_{\sym,-}\Vert_{ L^1(0,T;
  \Cont(\ov\Omega;\Rr^{3\times 3} )
  )
   } 
 \left [ E (t)- \mathcal{E}( \fat v (t), \fat d(t) )\right ]\de t 
    \,\label{def:l}
\end{multline}
for a.e.~$t\in(0,T)$. 
This implies that the linear form $\fat l \in L^\infty_{w^*} (0,T;(\Cont^1_0(\Omega))^*)$ can be identified via considering the  gradient operator $$\nabla  : L^1(0,T;\Cont^1_0 (\Omega ) )\ra L^1(0,T;\Cont(\ov\Omega; \Rr^{3\times 3}  ))\,,$$ we set $ G = \nabla (L^1(0,T;\Cont^1_0(\Omega)))$ the range of the gradient, the set of all gradient fields equipped with the norm of $L^1(0,T;\Cont(\ov \Omega;\Rr^{3\times 3}))$. Note that this mapping is injective due to the prescribed Dirichlet boundary conditions. 
We may set $ S = \nabla ^{-1} : G \ra L^1(0,T;\Cont^1_0(\Omega))$ such that for $h \in G$ the functional $h  \mapsto \langle \fat l , S h \rangle $ is a continuous linear functional on $G$. By the Hahn--Banach theorem~\cite[Thm.~1.1]{brezis}, this linear functional can be extended to a continuous linear functional $\Phi$ on $(L^1(0,T;\Cont(\ov\Omega;\Rr^{3\times 3})))^* = L^\infty_w (0,T;\mathcal{M}(\ov\Omega;\Rr^{3\times 3}))$ with \begin{align}
\langle \Phi (t) ; \fat A \rangle \leq 2 \Vert (\fat A)_{\sym,-}\Vert_{\Cont(\ov\Omega;\Rr^{3\times 3})} [ E (t)- \mathcal{E}( \fat v(t) , \fat d (t))]  \,\label{estMeas}
\end{align} 
for all $\fat A\in \Cont(\ov\Omega;\Rr^{3\times 3})$ a.e.~in $(0,T)$,  where the estimate follows from~\eqref{def:l} and Hahn--Banach's theorem written in its point-wise a.e.-form. 
By the Riesz representation theorem, we know that there exists a measure $\fat m\in L^\infty_{w^*}(0,T;\mathcal{M}(\ov\Omega; \Rr^{3\times 3})$ such that 
\begin{align*}
\langle \fat l(t) , \tu (t)\rangle_{
L^\infty_w(0,T:
(\Cont^1_0(\Omega))^*,
),L^1(0,T;
\Cont^1_0(\Omega)
)
}& = \langle \Phi (t), \nabla \tu(t) \rangle _{
L^\infty_w(0,T;
(\Cont(\ov\Omega))^*,
),L^1(0,T;
\Cont(\ov\Omega)
) 
}\\& =-\int_0^T  \int_{\ov\Omega} \nabla \tu(t) : \diff \fat m_t  \de t \,,
\end{align*}
which implies the formulation~\eqref{meas:eq} by the definition of $\fat l$ in~\eqref{def:l}. 
From the estimate~\eqref{estMeas}, we infer that the measure~$\fat m$ vanishes for all continuous functions with values in the skew-symmetric matrices, such that $ \fat m \in L^\infty_w(0,T;\mathcal{M}(\ov \Omega ; \Rr^{3\times 3}_{\sym}))$. Additionally, \eqref{estMeas} allows to insert $ \fat A = \fat \xi \otimes \fat \xi \phi$ for any $\fat \xi \in \Rr^3$ and $\phi \in \Cont(\ov\Omega; [0,\infty))$ such that $\int_{\ov\Omega}\phi  \fat \xi \otimes \fat \xi : \de \fat m_t \geq 0 $ for all $\phi \in \Cont(\ov\Omega; [0,\infty))$, which is the definition of a positive definite measure $ \fat m \in L^\infty_w(0,T;\mathcal{M}(\ov \Omega ; \Rr^{3\times 3}_{\sym,+}))$.  
From the estimate in~\eqref{estMeas}, we additionally infer by choosing $ \Phi (t) = \phi(t) I$ for $\phi \in\Cont^1_0([0,T])$ with $\phi \geq 0 $ a.e.~in $(0,T)$ that 
\begin{align*}
\int_0^T \phi (t) \int_{\ov\Omega} I:  \de \fat m  (x) \de t=\int_0^T \phi (t) \int_{\ov\Omega}  \tr (\fat m)  ( x) \de t   \leq 2 \int_0^T \phi(t) [  E(t) - \mathcal{E}(\fat v (t) , \fat d (t)) ] \de t \,.
\end{align*}
This implies that $\langle \fat m , I \rangle \leq  2 [E - \E(\fat v, \fat d )] $ 
for a.e.~$t\in(0,T)$, which in turn implies together with~\eqref{enneu} the inequality~\eqref{meeas:en}. 

Note that we use the usual spectral-norm for symmetric matrices $\vert A\vert_2 = \max_{i\in\{1,2,3\}} \vert \lambda_i\vert $, where $\lambda_i$ denote the eigenvalues of the matrix $A\in \Rr^{3\times 3}_{\sym}$. The dual-norm with respect to the Frobenius product $:$ is given by $ \vert A\vert_2' =\sum_{i=1}^3 \vert \lambda_i\vert $ for $A \in \Rr^{3\times 3}$. For a positive definite matrix all eigenvalues are non-negative, such that we may write $ \vert A\vert_2' = \sum_{i=1}^3 \lambda_i = \tr(A)=A:I$. This implies that we may identify the norm $\vert A\vert_2' = I : A $. 

\end{proof}
\begin{proof}[Proof of Theorem~\ref{thm:weakstrong}]
We observe that~\eqref{weak:d} tested with $\Delta\dd$ and integrated-in-time gives
\begin{align}
\int_0^t \int_\Omega \left ( \t \fat d + ( \fat v \cdot \nabla) \fat d - ( \nabla \fat v)_{\skw} \fat d + ( I - \fat d \otimes \fat d ) ( \lambda (\nabla \fat v )_{\sym} \fat d - \Delta \fat d ) \right ) \Delta \dd \de  x \de s = 0 \,.\label{tested:d}
\end{align}
Additionally, the equation~\eqref{system_a} for the strong solution tested with $\vv -\fat v$ and~\eqref{system_c} for the strong solution tested with  $\Delta (\fat d - \dd )$ implies
\begin{align}
0 {}&=\frac{1}{2} \Vert \vv \Vert_{L^2(\Omega)}^2 \Big\vert_0^t - \int_0^t \int_\Omega \t \vv \cdot \fat v +  (\vv \cdot \nabla) \vv\cdot \fat v  - \nabla \dd ^T \nabla \dd : \nabla \fat v  + \mu_4 (\nabla \vv)_{\sym} :(\nabla \fat v - \nabla \vv ) \de  x \de s \notag
\\
&- \int_0^t \int_\Omega (\mu_1+\lambda^2) ( \dd \cdot (\nabla \vv)_{\sym}\dd )(\dd \otimes \dd ):(\nabla \fat v - \nabla \vv ) \de  x \de s \notag
\\
&- \int_0^t \int_\Omega (\mu_5+\mu_6-\lambda^2 ) ( \dd \otimes (\nabla \vv)_{\sym} \dd ):(\nabla \fat v - \nabla \vv ) \de  x \de s \notag
\\
&- \int_0^t \int_\Omega \lambda\left  ( (\nabla \fat v )_{\sym} \dd \cdot (I - \dd \otimes \dd) \Delta \dd + \nabla\left ( (I- \dd \otimes \dd )(\nabla \vv)_{\sym} \dd\right ) :  \nabla \fat d \right ) \de  x \de s \notag
\\
& + \frac{1}{2}\Vert \nabla \dd\Vert_{L^2(\Omega)}^2 \Big\vert_0^t -\int_0^t \int_\Omega\nabla \left( (I-\dd \otimes \dd ) \Delta \dd  \right): ( \nabla \fat d - \nabla \dd )   \diff x \diff t  \notag 
\\& - \int_0^t \int_\Omega \t \nabla \dd : \nabla \fat d + \nabla \left ((\vv \cdot \nabla) \dd\right ):  \nabla \fat d - (\nabla \fat v )_{\skw} \dd \cdot \Delta \dd - \nabla\left ( (\nabla \vv)_{\skw} \dd \right): \nabla  \fat d \de   x \de s\notag
 \\
     &+\int_0^t \left \langle 
      \t \vv + ( \vv \cdot\nabla) \vv + \di ( \nabla \dd^T \nabla \dd) - \di \tilde{\fat T}^L , \fat v -\vv 
     \right \rangle \diff s \notag
     \\
     &+\int_0^t \left \langle 
    \nabla \left (  \t \dd +(\vv \cdot \nabla) \dd - ( \nabla \vv )_{\skw} \dd + ( I - \dd \otimes \dd )( \lambda (\nabla\vv)_{\sym}\dd  - \Delta \dd ) \right ) ,\nabla (\fat d - \dd)
     \right \rangle \diff s 
 \,.\label{tested:strong}
\end{align}
We observe that 
\begin{align*}
- \int_0^t \int_\Omega \t \nabla \dd : \nabla \fat d - \t \fat d \cdot \Delta \dd \de  x \de s  =\int_\Omega \nabla \fat d : \nabla \dd \de  x \Big\vert_0^t\,.
\end{align*}
Adding up~\eqref{envarform},~\eqref{tested:d}, and~\eqref{tested:strong} implies
\begin{align}
 [ E - &\mathcal{E}(\fat v , \fat d )  ]\Big\vert_{0-}^t +\frac{1}{2}\left [  \Vert \fat v - \vv \Vert_{L^2(\Omega)}^2 + \Vert \nabla \fat d - \nabla \dd \Vert_{L^2(\Omega)}^2  \right ]\Big\vert_0^t \notag
\\
+&\int_0^t \int_\Omega (\mu_1 + \lambda^2) (\fat d \cdot (\nabla \fat v- \nabla \vv)_{sym} \fat d) ^2
    + \mu_4 \vert (\nabla \fat v-\nabla \vv)_{sym} \vert^2
  \diff x \diff s \notag
    \\+&\int_0^t \int_\Omega
    (\mu_5 + \mu_6 - \lambda^2) \vert (\nabla \fat v-\nabla \vv)_{sym}  \fat d\vert^2 
+ \vert \fat d \times \Delta (\fat d - \dd) \vert^2 
     \diff x \diff s \notag
      \\
     +&\int_0^t \left \langle 
      \t \vv + ( \vv \cdot\nabla) \vv + \di ( \nabla \dd^T \nabla \dd) - \di \tilde{\fat T}^L , \fat v -\vv 
     \right \rangle\diff s \notag
     \\
     +&\int_0^t \left \langle 
    \nabla \left (  \t \dd +(\vv \cdot \nabla) \dd - ( \nabla \vv )_{\skw} \dd + ( I - \dd \otimes \dd )( \lambda (\nabla\vv)_{\sym} - \Delta \dd ) \right ) ,\nabla (\fat d - \dd)
     \right \rangle \diff s \notag
     \\
     \leq{}&
      \int_0^t \int_\Omega ( \fat v \cdot\nabla ) \fat v \cdot \vv + ( \vv \cdot \nabla) \vv \cdot \fat v 
\de  x \de s \notag
     \\
     &+\int_0^t \int_\Omega     \fat T^E(\fat d) : \nabla \vv  - \nabla \dd ^T \nabla \dd : \nabla \fat v - ( \fat v \cdot \nabla )\fat d \cdot \Delta \dd +\nabla \left ((\vv \cdot \nabla) \dd\right ):  \nabla \fat d \de  x \de s \notag
     \\
     &-\int_0^t \int_\Omega ( \mu_1+ \lambda^2 ) \left (\fat d \cdot (\nabla \vv)_{\sym} \fat d (\fat d \otimes \fat d ) - \dd \cdot ( \nabla \vv)_{\sym\dd } (\dd \otimes \dd )  \right ): ( \nabla \fat v -\nabla \vv)_{\sym} \de  x \de s \notag
     \\
     &- \int_0^t \int_\Omega (\mu_5+\mu_6-\lambda^2) \left ( (\nabla \vv)_{\sym}( \fat d \otimes \fat d - \dd \otimes \dd ) : (\nabla \fat v -\nabla \vv )_{\sym} \right ) \de  x \de s \notag
     \\
     &-\int_0^t \int_\Omega\nabla \left[ \left ( \fat d \otimes \fat d - \dd \otimes \dd \right ) \Delta \dd \right]: \left (  \nabla \fat d - \nabla \dd  \right ) \de  x \de s \notag
     \\
     &-  \lambda\int_0^t \int_\Omega\left ((I - \fat d \otimes \fat d)(\nabla \fat v )_{\sym} \fat d -  (I - \dd \otimes \dd)(\nabla \fat v )_{\sym} \dd \right )\cdot  \Delta \dd \de  x \de s\notag
     \\
      &-  \lambda \int_0^t \int_\Omega\nabla \fat d  : \nabla \left (  (I- \fat d \otimes \fat d ) (\nabla \vv)_{\sym} \fat d -   (I- \dd \otimes \dd ) (\nabla \vv)_{\sym}\dd  \right )\de  x \de s \notag
  \\
     &+\int_0^t \int_\Omega \nabla  ((\nabla \vv)_{\skw}(\fat d - \dd )): \nabla  \fat d +( ( \nabla \fat v )_{\skw} (\fat d - \dd) )\cdot \Delta \dd \de  x \de s\notag \\
     &\hspace*{5cm}
     + 
 \int_0^t \mathcal{K}( \vv ) \left [E- \mathcal{E} (\fat v,\fat d )  \right ]  \de s  \,.\label{together}
\end{align}
We recall the identity, which is a special case of the identity (6.7) in~\cite{weakstrongmeas},
\begin{align*}
0= \int_\Omega \nabla \vv : ( \nabla \dd^T \nabla \fat d ) + \nabla \vv : (\nabla \fat d^T \nabla \dd) + ( \vv \cdot \nabla) \fat d \cdot \Delta \dd - \nabla \left ((\vv \cdot \nabla) \dd\right ):  \nabla \fat d \de  x \,.
\end{align*}
this identity helps to transform the second line on the right-hand side of~\eqref{together} in the case of $\fat T^E = \fat T^E_2$ 
into 
\begin{align*}
-\int_\Omega     & \nabla \fat d ^T \nabla \fat d : \nabla \vv + \nabla \dd ^T \nabla \dd : \nabla \fat v + ( \fat v \cdot \nabla )\fat d \cdot \Delta \dd -\nabla \left ((\vv \cdot \nabla) \dd\right ):  \nabla \fat d \de  x
\\
&=-\int_\Omega  (\nabla \fat d - \nabla \dd) ^T (\nabla \fat d-\nabla \dd)  : \nabla \vv  +( \nabla \dd^T \nabla \dd) : (\nabla \fat v - \nabla \vv) + (( \fat v - \vv ) \cdot \nabla) \fat d \cdot \Delta \dd \de  x 
\\
&=-\int_\Omega  (\nabla \fat d - \nabla \dd) ^T (\nabla \fat d-\nabla \dd)  : \nabla \vv  + (( \fat v - \vv ) \cdot \nabla) (\fat d - \dd ) \cdot \Delta \dd \de  x \,.
\end{align*}
In case of $\fat T^E=\fat T^E_1$ we may observe from the uni-norm restriction  and additional manipulations that
\begin{align*}
    \int_\Omega  &    (\vv\cdot \nabla )\fat d [\fat d ]_{x}[\fat d]_{x}\Delta \fat d   - \nabla \dd ^T \nabla \dd : \nabla \fat v - ( \fat v \cdot \nabla )\fat d \cdot \Delta \dd +\nabla \left ((\vv \cdot \nabla) \dd\right ):  \nabla \fat d \de  x \qquad\qquad\qquad
    \\
    ={}&  \int_\Omega      (\vv\cdot \nabla )\fat d [\fat d ]_{x}^T [\fat d]_{x} \Delta \fat d   + \left[ [\dd]_{x}^T[\dd]_x\nabla \dd   - [ \fat d ]_{x}^T [\fat d ]_{x}\nabla \fat d  \right] \cdot (\Delta \dd\otimes \fat v) \de  x
    \\& +\int_\Omega  \nabla \left ((\vv \cdot \nabla) \dd [\dd]_{x}^T[\dd]_x \right ):  \nabla \fat d \de  x
\\ ={}&  \int_\Omega      (\vv\cdot \nabla )(\fat d-\dd) [\fat d ]_{x}^T [\fat d]_{x} (\Delta \fat d- \Delta \dd)    - ( (\fat v-\vv) \cdot \nabla )(\fat d-\dd)  \cdot [ \fat d ]_{x}^T [\fat d ]_{x}  \Delta \dd  \de  x
\\ & - \int_\Omega     \nabla \left[  (\vv\cdot \nabla )\dd( [\fat d ]_{x}^T [\fat d]_{x} -  [\dd ]_{x}^T [\dd]_{x})\right]: (\nabla \fat d- \nabla \dd) 
\de   x \\ &+ \int_{\Omega} 
( (\fat v-\vv) \cdot \nabla )\dd  \cdot( [\fat d ]_{x}^T [\fat d]_{x} -  [\dd ]_{x}^T [\dd]_{x}) \Delta \dd  \de  x\,.
\end{align*}
The resulting terms can now be estimated in terms of the relative energy and the relative dissipation, \textit{i.e.,} the terms on the left-hand side. Similar arguments have already been applied in~\cite{weakstrongmeas} and~\cite{Lasarzik_opt_control}. 
We remark, that in the term in the fifth line on the right-hand side of~\eqref{together}, we have to apply the product rule to infer
\begin{multline*}
-\int_\Omega \nabla \left[ \left ( \fat d \otimes \fat d - \dd \otimes \dd \right ) \Delta \dd \right]:  \left (\nabla \fat d - \nabla \dd  \right ) \de  x
\\
=-\int_\Omega ( \nabla \fat d - \nabla  \dd ) : \nabla \left ( \fat d \otimes \fat d - \dd \otimes \dd \right ) \cdot \Delta \dd +\left (( \nabla \fat d - \nabla  \dd ) ^T\left ( \fat d \otimes \fat d - \dd \otimes \dd \right ) \right ): \nabla \Delta \dd  \de  x\,.
\end{multline*}

In order to prove the weak-strong uniqueness result, the terms on the right-hand side of~\eqref{together} have to estimated by terms on the left-hand side and in the end Gronwall's inequality is applied. 
Since most of the estimates are rather standard and done using H\"older's and Young's inequality, we refrain from displaying it in full detail here. 
The interested reader may consult the article~\cite{weakstrongmeas} for more details, where these  estimates were performed for a more involved system. 

Estimating all terms appropriately, we find that there exists a constant $C>0$ such that
\begin{align}
 [ E -& \mathcal{E}(\fat v , \fat d )  ]\Big\vert_{0-}^t +\frac{1}{2}\left [  \Vert \fat v - \vv \Vert_{L^2(\Omega)}^2 + \Vert \nabla \fat d - \nabla \dd \Vert_{L^2(\Omega)}^2  \right ]\Big\vert_0^t \notag
\\
+&\frac{1}{2}\int_0^t \int_\Omega (\mu_1 + \lambda^2) (\fat d \cdot (\nabla \fat v- \nabla \vv)_{sym} \fat d) ^2
    + \mu_4 \vert (\nabla \fat v-\nabla \vv)_{sym} \vert^2
  \diff x \diff s \notag
    \\+&\frac{1}{2}\int_0^t \int_\Omega
    (\mu_5 + \mu_6 - \lambda^2) \vert (\nabla \fat v-\nabla \vv)_{sym}  \fat d\vert^2 
+ \vert \fat d \times \Delta (\fat d - \dd) \vert^2 
     \diff x \diff s \notag \\
     +&\int_0^t \left \langle 
      \t \vv + ( \vv \cdot\nabla) \vv + \di ( \nabla \dd^T \nabla \dd) - \di \tilde{\fat T}^L , \fat v -\vv 
     \right \rangle\diff s \notag
     \\
     +&\int_0^t \left \langle 
    \nabla \left (  \t \dd +(\vv \cdot \nabla) \dd - ( \nabla \vv )_{\skw} \dd + ( I - \dd \otimes \dd )( \lambda (\nabla\vv)_{\sym} - \Delta \dd ) \right ) ,\nabla (\fat d - \dd)
     \right \rangle \diff s \notag
     \\
     \leq{}&
    C  \int_0^t \left ( \Vert \vv \Vert_{L^\infty(\Omega)} ^2 + \Vert \nabla \vv \Vert_{L^\infty(\Omega)} + \Vert \Delta  \dd \Vert_{L^3(\Omega)}^2 \right )\left [  \Vert \fat v - \vv \Vert_{L^2(\Omega)}^2 + \Vert \nabla \fat d - \nabla \dd \Vert_{L^2(\Omega)}^2  \right ] \de s \notag
\\
    &+C  \int_0^t \left (  \Vert\nabla \vv \Vert_{L^3(\Omega)}^2+ \Vert \Delta \dd \Vert_{L^\infty(\Omega)}+\Vert  \Delta \dd \Vert_{W^{1,3}(\Omega)}^2  \right )\left [  \Vert \fat v - \vv \Vert_{L^2(\Omega)}^2 + \Vert \nabla \fat d - \nabla \dd \Vert_{L^2(\Omega)}^2  \right ] \de s \notag
    \\
   &   + \int_0^t
 \mathcal{K}( \vv ) \left [E- \mathcal{E} (\fat v,\fat d )  \right ]  \de s  \,.\label{estGron}
\end{align}
Gronwall's inequality implies the relative energy inequality~\eqref{enineq}. Choosing $(\vv,\dd)$ to be a solution, implies that the fourth and fifth line of~\eqref{enineq} vanish. Since all terms on the left-hand side of~\eqref{enineq} are non-negative, if $\fat v_0 = \vv(0) $ and $\fat d _0 = \dd(0)$ the right-hand side of~\eqref{enineq} is zero such that 
$\fat v(t) = \vv(t)$ and $\fat d(t) = \dd(t)$ for a.e.~$t\in(0,T^*)$, which implies the assertion of Theorem~\ref{thm:weakstrong}. 

\end{proof}
\begin{remark}[Weak-strong uniqueness extended]
It may seem strange that all calculations in the above proof are conducted on the time interval $(0,t)$, when the definition of energy-variational solutions~\ref{def:envar} is formulated on every interval $(t,s)$ for all $s<t\in(0,T)$. Indeed, the calculations of the above proof can also be carried out for all intervals $(s,t)$ such that we may infer inequality~\eqref{estGron} with $0$ replaced by $s$. 
Thus, the weak-strong uniqueness result can be sharpened: If there exists an $s\in(0,T)$ such that there exists a solution $(\vv,\dd)$ fulfilling the regularity assumptions~\eqref{regvd} on $(s,T)$ with $ \fat v (s) = \vv(s)$, $ \fat d(s) = \dd(s)$ and additionally $\mathcal E(\vv(s),\dd(s)) = E(s)$, then it holds
\begin{align*}
    \fat v\equiv \vv\,, \qquad \fat d \equiv \dd \qquad \text{on }(s,T)\,.
\end{align*}
\end{remark}

\subsection{Auxiliary Lemmata\label{sec:aux}}
\begin{lemma}\label{lem:EulerLagrange}
Let $\fat d \in L^\infty (\Omega)\cap H^1(\Omega)$ with $\fat d \times \Delta \fat d \in L^2(\Omega)$ and $\vert \fat d ( x ,t)\vert = 1 $ a.e.~in $\Omega\times (0,T)$ such that for the trace $\vert \tr(\fat d)\vert = 1$ a.e.~on $\partial \Omega$.  Then we can Identify the terms $-  \fat d \times ( \fat d \times \Delta  \fat d)= [\fat d ]_{x} ^T [\fat d ]_{x} \Delta \fat d = (I-\fat d \otimes \fat d )\Delta \fat d  = \Delta  \fat d + \vert \nabla \fat d \vert^2 \fat d $ in $L^2(\Omega)$, which implies 
\begin{align*}
    \int_\Omega \vert \fat d \times \Delta\fat d \vert ^2 \de  x = \int_\Omega \vert \Delta  \fat d + \vert \nabla \fat d \vert^2 \fat d \vert ^2 \de  x \,.
\end{align*}
\end{lemma}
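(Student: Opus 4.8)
The plan is to reduce the lemma to one distributional identity, because the three expressions $-\fat d\times(\fat d\times\Delta\fat d)$, $[\fat d]_x^T[\fat d]_x\Delta\fat d$ and $(I-\fat d\otimes\fat d)\Delta\fat d$ agree pointwise by elementary linear algebra, so that all the analytic content lies in their identification with $\Delta\fat d+\vert\nabla\fat d\vert^2\fat d$ in $L^2(\Omega)$. First I would record the algebraic facts: for $\fat a,\fat w\in\Rr^3$ with $\vert\fat a\vert=1$, the expansion $\fat a\times(\fat a\times\fat w)=(\fat a\cdot\fat w)\fat a-\vert\fat a\vert^2\fat w$ and the componentwise identity $\sum_i\epsilon_{ijk}\epsilon_{iml}=\delta_{jm}\delta_{kl}-\delta_{jl}\delta_{km}$ (which gives $[\fat a]_x^T[\fat a]_x=\vert\fat a\vert^2 I-\fat a\otimes\fat a$) yield $-\fat a\times(\fat a\times\fat w)=(I-\fat a\otimes\fat a)\fat w=[\fat a]_x^T[\fat a]_x\fat w$. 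I write $\fat g:=\fat d\times\Delta\fat d$ for the $L^2(\Omega;\Rr^3)$ field which, by hypothesis, is the weak divergence of $\fat d\times\nabla\fat d\in L^2(\Omega)$. Applying the algebraic identities pointwise a.e.\ with $\fat a=\fat d$, $\fat w=\fat g$ shows that all three expressions equal the $L^2$ function $-\fat d\times\fat g$, where $(I-\fat d\otimes\fat d)\Delta\fat d$ and $[\fat d]_x^T[\fat d]_x\Delta\fat d$ are read as the distribution $\Delta\fat d-(\fat d\cdot\Delta\fat d)\fat d$, and the unit-norm constraint forces $\fat d\cdot\Delta\fat d=-\vert\nabla\fat d\vert^2$, so this distribution equals $\Delta\fat d+\vert\nabla\fat d\vert^2\fat d$ (the sum of a distribution and an $L^1$ function). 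Thus the lemma reduces to the claim that this distribution is represented by the $L^2$ function $-\fat d\times\fat g$.

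To prove the claim I would test against $\fat\phi\in\mathcal{C}_c^\infty(\Omega;\Rr^3)$. The crucial point is that $\fat d\in H^1(\Omega)\cap L^\infty(\Omega)$, hence $\fat d\times\fat\phi\in H^1(\Omega)$ has compact support and is an admissible test field for the weak divergence defining $\fat g$; using the scalar triple product identity $(-\fat d\times\fat g)\cdot\fat\phi=\fat g\cdot(\fat d\times\fat\phi)$ one then gets
\[
\int_\Omega(-\fat d\times\fat g)\cdot\fat\phi\,\diff x=-\int_\Omega(\fat d\times\nabla\fat d):\nabla(\fat d\times\fat\phi)\,\diff x.
\]
Expanding $\nabla(\fat d\times\fat\phi)$ column-wise as $\nabla\fat d\times\fat\phi+\fat d\times\nabla\fat\phi$, applying the contraction identity $(\fat a\times\fat b)\cdot(\fat c\times\fat e)=(\fat a\cdot\fat c)(\fat b\cdot\fat e)-(\fat a\cdot\fat e)(\fat b\cdot\fat c)$, and using $\fat d\cdot\partial_j\fat d=0$ a.e.\ (rigorous since $\nabla\vert\fat d\vert^2=2(\nabla\fat d)^T\fat d=0$ for $\fat d\in H^1\cap L^\infty$), the integrand collapses to $\nabla\fat d:\nabla\fat\phi-\vert\nabla\fat d\vert^2\,\fat d\cdot\fat\phi$, so that $\int_\Omega(-\fat d\times\fat g)\cdot\fat\phi\,\diff x=-\int_\Omega\nabla\fat d:\nabla\fat\phi\,\diff x+\int_\Omega\vert\nabla\fat d\vert^2\fat d\cdot\fat\phi\,\diff x=\langle\Delta\fat d+\vert\nabla\fat d\vert^2\fat d,\fat\phi\rangle$. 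This is precisely the claim, and in particular $\Delta\fat d+\vert\nabla\fat d\vert^2\fat d\in L^2(\Omega;\Rr^3)$.

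For the $L^2$-norm identity I would note that $\fat g=\fat d\times\Delta\fat d$ is orthogonal to $\fat d$ a.e.: testing the scalar field $\fat d\cdot\fat g$ against $\psi\in\mathcal{C}_c^\infty(\Omega)$ and unwinding the weak divergence as above makes it vanish, because $\fat d\times\partial_j\fat d$ is pointwise orthogonal to both $\fat d$ and $\partial_j\fat d$. With $\vert\fat d\vert=1$, the Lagrange identity gives $\vert-\fat d\times\fat g\vert^2=\vert\fat d\vert^2\vert\fat g\vert^2-(\fat d\cdot\fat g)^2=\vert\fat g\vert^2$ a.e., and integrating over $\Omega$ yields $\int_\Omega\vert\fat d\times\Delta\fat d\vert^2\diff x=\int_\Omega\vert\Delta\fat d+\vert\nabla\fat d\vert^2\fat d\vert^2\diff x$. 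The main obstacle is purely the low regularity: since $\Delta\fat d$ is only a distribution, none of the products in the statement is individually meaningful, and the real content is that the particular combination $\Delta\fat d+\vert\nabla\fat d\vert^2\fat d$ actually lies in $L^2$; this hinges on $\fat d\times\fat\phi$ being a bona fide $H^1_0$ test field (so that the weak divergence defining $\fat d\times\Delta\fat d$ can be unwound) and on the pointwise constraint $\fat d\cdot\nabla\fat d=0$ killing every uncontrolled term.
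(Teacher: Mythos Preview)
Your proof is correct and follows essentially the same route as the paper: both reduce the identification to a weak (integration-by-parts) computation showing that $(I-\fat d\otimes\fat d)\Delta\fat d$ and $\Delta\fat d+\vert\nabla\fat d\vert^2\fat d$ agree as distributions, using $\nabla\vert\fat d\vert^2=0$ a.e.\ to eliminate the uncontrolled cross terms.

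There are two technical differences worth noting. First, the paper tests against $\fat\varphi\in H^1(\Omega)$ and therefore picks up boundary integrals, which are killed by the hypothesis $\vert\tr(\fat d)\vert=1$ on $\partial\Omega$ (so that $\fat n\cdot\nabla\vert\fat d\vert^2=0$ there); you test against $\fat\phi\in\mathcal{C}^\infty_c(\Omega)$ and use $\fat d\times\fat\phi\in H^1_0(\Omega)$ as the test field for the weak divergence, so no boundary terms appear and the trace hypothesis is in fact not needed in your argument. Second, for the norm identity the paper invokes the idempotency $(I-\fat d\otimes\fat d)=(I-\fat d\otimes\fat d)^2$ and writes the chain $\int\vert\fat d\times\Delta\fat d\vert^2=\int\Delta\fat d\cdot(I-\fat d\otimes\fat d)\Delta\fat d=\int\vert(I-\fat d\otimes\fat d)\Delta\fat d\vert^2$, which is algebraically transparent but formally involves $\Delta\fat d$ as if it were a function; your argument via the Lagrange identity together with the separate verification that $\fat d\cdot\fat g=0$ a.e.\ (by testing against $\psi\fat d$ and using orthogonality of $\fat d\times\partial_j\fat d$ to both $\fat d$ and $\partial_j\fat d$) is a cleaner way to make this step rigorous at the stated regularity.
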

\begin{remark}
We note that the notation $\Delta \fat d + \vert \nabla \fat d\vert^2 \fat d $ is the usual formulation in the Euler--Lagrange equation of harmonic maps into the sphere~\cite{sphere}. The norm of this term appears as a dissipative term in the energy-dissipation mechanism of the system (see~\eqref{envarform} below). Formally this implies that the solution to such a system is forced to approximately fulfill the Euler--Lagrange equation of the Dirichlet energy minimized over all functions with values in the sphere, in the large-time limit. 
\end{remark}
\begin{proof}
For all $\fat \varphi \in \He$, we find by an integration-by-parts and  $ \nabla \vert \fat d\vert^2 = 0$ a.e. in~$\Omega $ as well as  a.e. on~$\partial \Omega $  that
\begin{align*}
\int_\Omega \fat \varphi  \cdot  \left ( I - \fat d \otimes \fat d \right ) \Delta \fat d \de x 
= {} &  - \int_\Omega\nabla \fat \varphi : ( I - \fat d \otimes \fat d ) \nabla \fat d - \fat \varphi \cdot \fat d  \vert \nabla \fat d\vert^2 - (\nabla \fat d ^T\fat \varphi)\cdot(\nabla \fat d^T \fat d ) \de  x 
\\&+ \int_{\partial \Omega } \fat \varphi \cdot ( I - \fat d \otimes \fat d ) \nabla \fat d \cdot \fat n \de S 
\\
={}&-\int_\Omega \nabla \fat \varphi : \nabla \fat d -  \fat \varphi \cdot  \vert \nabla \fat d\vert^2  \fat d \de  x -\frac{1}{2}\int_\Omega  \nabla ( \fat \varphi \cdot \fat d) \cdot \nabla \vert \fat d \vert^2 \de  x 
\\&+ \int_{\partial \Omega } \fat \varphi \cdot \nabla \fat d \cdot \fat n - \frac{1}{2}\fat \varphi \cdot \fat d \fat n \cdot \nabla \vert \fat d\vert^2 \de S
\\ ={}& \int_\Omega \fat \varphi \cdot ( \Delta \fat d + \vert \nabla \fat d \vert^2 \fat d ) \de  x
 \,,
\end{align*}
which implies the equivalence of the terms in $L^2(\Omega)$. The unit-norm constraint implies $( I -\fat d\otimes \fat d ) = ( I -\fat d\otimes \fat d )( I -\fat d\otimes \fat d )$ such that we infer the second equality
\begin{multline*}
    \int_\Omega \vert \fat d \times \Delta\fat d \vert ^2 \de  x = \int_\Omega \Delta \fat d \cdot ( I -\fat d\otimes \fat d ) \Delta \fat d  \de  x= \int_\Omega \left ( ( I -\fat d\otimes \fat d ) \Delta \fat d \right) \cdot( I -\fat d\otimes \fat d ) \Delta \fat d  \de  x\\  = \int_\Omega \vert \Delta  \fat d + \vert \nabla \fat d \vert^2 \fat d\vert ^2 \de  x \,.
\end{multline*}
This implies the assertion. 
\end{proof}

\begin{lemma}[{\cite[Lemma 2.4]{lasarzik_incompressible_fluids}}]\label{lem:invar}
Let $f\in L^1(0,T)$ and $g\in L^\infty(0,T)$ with $g\geq 0$ a.e.~in $(0,T)$.
Then the two inequalities 
\begin{align*}
-\int_0^T \phi'(t) g(t) \de t  + \int_0^T \phi(t) f(t) \de t \leq 0 
\end{align*}
for all $\phi \in {\Cont}^1_c ((0,T))$ with $\phi\geq 0$ for all $t\in (0,T)$ and 
\begin{align}
g(t) -g(s) + \int_s^t f(\tau) \de \tau \leq 0 \quad \text{for a.e.~}t,\, s\in(0,T), s\leq t \,\label{ineq2}
\end{align}
are equivalent. 

Moreover, from \eqref{ineq2} we can infer that there exists a function $h\in \BV$ such that $h = g $ a.e.~in $(0,T)$ and the inequality~\eqref{ineq2} holds for every $s,t\in (0,T)$  with $g $ replaced by $h$. 
\end{lemma}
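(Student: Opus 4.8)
The equivalence of the two inequalities is an entirely standard measure-theoretic fact, so I would treat it by separating the two implications. For the direction from the pointwise inequality \eqref{ineq2} to the test-function inequality, I would pick $\phi\in\Cont^1_c((0,T))$ with $\phi\geq 0$, and use the Fubini--Tonelli theorem to rewrite $\int_0^T\phi(t)f(t)\,\de t$ by first writing $\phi(t)=-\int_t^T\phi'(\tau)\,\de\tau$ (using $\phi(T)=0$), so that
\begin{align*}
\int_0^T \phi(t) f(t)\,\de t = -\int_0^T \phi'(\tau)\left(\int_0^\tau f(t)\,\de t\right)\de\tau.
\end{align*}
Then I would compare this with $-\int_0^T\phi'(\tau)g(\tau)\,\de\tau$; after subtracting $-\int_0^T\phi'(\tau)g(0)\,\de\tau$-type terms one is left with an integrand against $-\phi'$ that is controlled by \eqref{ineq2}. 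Since $\phi'$ does not have a sign, one has to be a little careful: the cleanest route is to integrate the pointwise inequality once more. Concretely, multiply \eqref{ineq2} (with $s$ fixed, say $s\to 0^+$ along Lebesgue points, or simply keep it in the form $g(t)+\int_0^t f\le g(0)$ after noting $g$ has an a.e.-defined value and is essentially bounded) and integrate against a nonnegative function, or better: observe that \eqref{ineq2} says the function $t\mapsto g(t)+\int_0^t f(\tau)\,\de\tau$ is a.e.\ non-increasing, hence equals a.e.\ a genuine non-increasing (in particular $\BV$) function $G$; then $-\int_0^T\phi' G\,\de t=\int_0^T\phi\,\de G\le 0$ because $\phi\geq 0$ and $\de G$ is a non-positive measure, and expanding $G=g+\int_0^{\cdot}f$ and integrating by parts in the $\int_0^{\cdot}f$ term recovers exactly $-\int\phi' g+\int\phi f\le 0$.

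For the converse direction, given the test-function inequality, I would fix Lebesgue points $s<t$ of $g$ and of $\tau\mapsto\int_0^\tau f$, and insert an approximation of $\mathbf 1_{(s,t)}$: take $\phi=\phi_\varepsilon$ a smooth nonnegative bump that increases from $0$ to $1$ on $(s-\varepsilon,s)$ and decreases back to $0$ on $(t,t+\varepsilon)$, so that $\phi_\varepsilon'$ is concentrated near $s$ and near $t$ with total mass $+1$ near $s$ and $-1$ near $t$. Plugging $\phi_\varepsilon$ into the test-function inequality, the term $\int\phi_\varepsilon f\to\int_s^t f$ by dominated convergence ($f\in L^1$), and $-\int\phi_\varepsilon' g\to g(s)-g(t)$ at Lebesgue points by the Lebesgue differentiation theorem (this is where $g\in L^\infty$ is used, to dominate and to make the local averages converge). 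This yields \eqref{ineq2} for a.e.\ $s\le t$.

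For the final assertion --- existence of a $\BV$ representative $h$ for which \eqref{ineq2} holds for \emph{all} $s,t$ --- I would argue as follows. From \eqref{ineq2}, $t\mapsto g(t)+\int_0^t f(\tau)\,\de\tau=:\tilde g(t)$ is a.e.\ non-increasing on $(0,T)$; any function that is non-increasing off a null set agrees a.e.\ with an everywhere non-increasing function (e.g.\ $\overline g(t):=\esssup_{\tau>t}\tilde g(\tau)$, or the right-continuous version), which is automatically $\BV$. Then set $h(t):=\overline g(t)-\int_0^t f(\tau)\,\de\tau$; since $t\mapsto\int_0^t f$ is absolutely continuous, $h\in\BV$, $h=g$ a.e., and for every $s\le t$ one has $h(t)-h(s)+\int_s^t f=\overline g(t)-\overline g(s)\le 0$. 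Boundedness of $h$ follows from $g\in L^\infty$ together with $f\in L^1$ once one picks the representative $\overline g$ so that its values lie in the essential range of $\tilde g$.

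I do not anticipate a genuine obstacle here; the only thing to be careful about is bookkeeping with Lebesgue points and the sign of $\phi'$ in the first implication, which the ``monotone representative'' reformulation $\tilde g=g+\int_0^{\cdot}f$ makes transparent. Since the statement is quoted from \cite[Lemma 2.4]{lasarzik_incompressible_fluids}, I would either reproduce this short argument or simply cite it.
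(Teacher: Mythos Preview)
The paper does not give a proof of this lemma at all; it simply quotes the result from \cite[Lemma 2.4]{lasarzik_incompressible_fluids} and moves on. Your proposed argument is a correct and standard proof --- the reformulation via the a.e.\ non-increasing function $\tilde g(t)=g(t)+\int_0^t f$ is exactly the right way to organize both implications and the $\BV$-representative claim --- so either reproducing it or, as you yourself suggest, just citing the reference would match the paper.
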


\subsection{Finite element spaces}\label{sec:fem_spaces}
For the rest of this paper we presume the following assumptions.
\begin{enumerate}
    \item\label{ass1} The domain $\Omega \subset \Rr^3$ is a convex polyhedron.
    \item\label{ass2} The subdivision $\mathcal{T}^h$ of 
    tetrahedra 
    of our domain $\Omega \subset \Rr^3$ is quasi-uniform (in the sense of \cite[4.4.15]{Scott_Brenner_FEM}). 
    \item\label{ass3} All $T\in \mathcal{T}^h$ have at least one node that is not on the boundary $\partial \Omega$.
    \item\label{ass4} Every $T\in \mathcal{T}^h$ is equipped with a Lagrangian finite element.
\end{enumerate}
In order to apply well known results for the numerical approximation of incompressible flows, we choose ($P2$-$P1$) Taylor--Hood elements (\textit{cf.} \cite{hood_taylor_original}) for the velocity. By $X_h$ and $M_h$ we denote the finite element spaces for the velocity and pressure,
\begin{align*}
    M_h &\coloneqq \{  q \in   C (\overline{\Omega}) :  \int_{\Omega} q \diff x =0, \quad q \lvert_K \in \mathbb{P}_1 (K) \quad\forall K \in  \mathcal{T}_h  \}
    ,\\
    X_h &\coloneqq \{  v \in C (\overline{\Omega}) : v= 0 \text{ on }\partial \Omega, \quad v \lvert_K \in \mathbb{P}_2 (K) \quad\forall K \in  \mathcal{T}_h 
    \}
    ,
\end{align*}
where $\mathbb{P}_k(\Omega)$ is the set of polynomials of degree $k$ or less on the domain $\Omega$.
For better readability, we denote approximately divergence-free functions in $X_h$ as the space space $V_h$, \textit{i.e.}
\begin{align}\label{discrete_inf_sup}
    V_h \coloneqq \{  v\in [X_h]^3 , \quad
    (\nabla \cdot v, q ) = 0 \text{ for all } q\in M_h 
    \}.
\end{align}
It is a well-known result (\textit{cf.} \cite{verfurth}) that this choice of spaces fulfills the inf-sup condition (sometimes also called \textit{LBB} or \textit{Ladyzhenskaya–Babuska–Brezzi} condition) under our given assumptions (\ref{ass1}- \ref{ass4}), that is
\begin{align*}
     \sup_{v_h \in [X_h]^3} \frac{(\nabla \cdot v_h,  q_h)}{\LTwoNorm{v_h}}
     \geq
     C
     \LTwoNorm{ q_h}
\end{align*}
for all $q_h \in M_h$ and for some constant $C>0$.
The director and discrete Laplacian are approximated using piecewise linear functions,
\begin{align*}
    Z_h \coloneqq \{  v \in C (\overline{\Omega}) : v \lvert_K \in \mathbb{P}_1 (K) \quad\forall K \in  \mathcal{T}_h \}.
\end{align*}
The homogeneous Dirichlet boundary conditions can be implied in the space by
\begin{align*}
    Y_h \coloneqq \{  v \in C (\overline{\Omega}) : v= 0 \text{ on }\partial \Omega, \quad v \lvert_K \in \mathbb{P}_1 (K) \quad\forall K \in  \mathcal{T}_h \}.
\end{align*}
Our final mixed finite element space will be denoted by,
\begin{align*}
    U_h \coloneqq V_h \times [Y_h]^3 
\end{align*}

\begin{lemma}[Inverse estimate, \textit{cf.} {\cite[Thm. 4.5.11]{Scott_Brenner_FEM}}]\label{lem:inverse_estimate}
Under the assumptions (\ref{ass1}-\ref{ass4}), let $p,q \in [1, \infty]$ and $0\leq m \leq l \leq 1$. Then there exists a constant $C$ independent of $h$, such that
\begin{align*}
    \norm{v}_{W^{l,p}(\Omega)}\leq C h^{m-l + \min\{0, 3/p-3/q \}}  \norm{v}_{W^{m,q}(\Omega)}
\end{align*}
holds for all $v \in [Z_h]^3, V_h$.
\end{lemma}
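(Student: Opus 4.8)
\emph{Proof plan.} The statement is the classical inverse inequality for finite element spaces (\cite[Thm.~4.5.11]{Scott_Brenner_FEM}), and the plan is the standard scaling argument: reduce to a reference element, where all norms on a finite-dimensional polynomial space are equivalent, transport the estimate to an arbitrary element by an affine pull-back, and finally sum over the triangulation, with quasi-uniformity (Assumption~\ref{ass2}) ensuring that all constants and powers of $h$ are uniform over $\mathcal{T}^h$.

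First I would fix a reference simplex $\hat T$. Since the local restrictions $v|_K$ of functions in $[Z_h]^3$ and in $V_h \subset [X_h]^3$ lie in $\mathbb{P}_2(K)^3$, and $\mathbb{P}_2(\hat T)$ is finite-dimensional, all Sobolev (semi-)norms are equivalent on it: there is $\hat C = \hat C(\hat T, l, m, p, q)$, independent of $h$, with $\norm{\hat v}_{W^{l,p}(\hat T)} \le \hat C \norm{\hat v}_{W^{m,q}(\hat T)}$ for every polynomial $\hat v$ of degree at most $2$ (using here $m \le l$). Next I would write an arbitrary $T \in \mathcal{T}^h$ as $T = F_T(\hat T)$ for an affine map $F_T$ and invoke the standard affine change-of-variables estimates for Sobolev norms, which, for $v = \hat v \circ F_T^{-1}$, give $\norm{v}_{W^{l,p}(T)} \sim h_T^{\,3/p - l}\,\norm{\hat v}_{W^{l,p}(\hat T)}$ and $\norm{\hat v}_{W^{m,q}(\hat T)} \sim h_T^{\,m - 3/q}\,\norm{v}_{W^{m,q}(T)}$, the implied constants depending only on the shape-regularity of $T$. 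Chaining these with the reference estimate yields the \emph{local} inverse inequality $\norm{v}_{W^{l,p}(T)} \le C\, h_T^{\,m - l + 3/p - 3/q}\,\norm{v}_{W^{m,q}(T)}$ with $C$ independent of $T$ and $h$.

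Finally I would sum over $T \in \mathcal{T}^h$, using $h_T \sim h$ uniformly by quasi-uniformity. This is the only place requiring care, and it is exactly where the exponent $\min\{0,\, 3/p - 3/q\}$ originates: if $p \ge q$, then $3/p - 3/q \le 0$ and, writing $a_T := \norm{v}_{W^{m,q}(T)}^q$, one has $\sum_T a_T^{p/q} \le \bigl(\sum_T a_T\bigr)^{p/q}$ by superadditivity of $t \mapsto t^{p/q}$, so raising the summed local estimates to the power $p$ and back gives the exponent $m - l + 3/p - 3/q = m - l + \min\{0, 3/p - 3/q\}$; if $p < q$, then $3/p - 3/q > 0$ and instead I would first apply H\"older on the bounded domain, $\norm{v}_{W^{l,p}(\Omega)} \le |\Omega|^{1/p - 1/q}\norm{v}_{W^{l,q}(\Omega)}$, and then the already established case $p = q$ to pick up the factor $h^{m-l} = h^{m - l + \min\{0, 3/p-3/q\}}$. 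The endpoint cases $p = \infty$ or $q = \infty$ are handled identically with suprema over $T$ in place of the sums. I do not expect a genuine obstacle: the one subtle point is this $\ell^p$-versus-$\ell^q$ bookkeeping, and the one structural hypothesis that matters is that $\mathcal{T}^h$ is quasi-uniform (not merely shape-regular), which is what makes both the constant and the power of $h$ uniform over the mesh.
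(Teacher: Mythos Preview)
Your argument is correct and is exactly the standard scaling proof of the global inverse inequality as presented in the cited reference \cite[Thm.~4.5.11]{Scott_Brenner_FEM}. Note that the paper itself does not give a proof of this lemma at all; it merely states the result with the citation, so there is no ``paper's own proof'' to compare against---your sketch is essentially the proof one finds in Brenner--Scott.
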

For the finite element spaces $[Y_h]^3$ and $ V_h$ and a function $f \in \L{2}$ the standard $L^2$-projection, is
denoted by $ \project{:}L^2(\Omega)^3 \ra [Y_h]^3$ and $\projectV {:}\Ha \ra V_h$, respectively. 
\begin{lemma}[Projection operator, \textit{cf.}~\cite{wahlbin},~{\cite[Lem.~1.131, Prop.~1.134]{guermond}}]
\label{thm:stability_projection}
Under the previously given assumptions (\ref{ass1} - \ref{ass4}), the $L^2$ projection onto $[Y_h]^3$ is stable in $\L{p}$ for all $1\leq p \leq \infty$ and $\H{1}$, \textit{i.e.} we have
\begin{align*}
    \lebnorm{\project (\fat u_1)}{p}
  &  \leq
   C  \lebnorm{\fat  u_1}{p}, \\
    \lebnorm{\project \fat  u_2 - \fat  u_2}{2} + h \Hsobnorm{\project (\fat u_2)}{1}
   & \leq
    C h\Hsobnorm{ \fat u_2}{1}
\end{align*}
for all $\fat u_1 \in \L{2} \cap \L{p}$, $\fat u_2 \in \H{1}$.
\end{lemma}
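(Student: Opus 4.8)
The plan is to prove the two estimates separately. The $H^1$ bound needs only the best-approximation property of $\project$, a standard quasi-interpolation error estimate, and the inverse inequality of Lemma~\ref{lem:inverse_estimate}; the $L^p$-stability rests on an $L^\infty$-stability estimate, whose proof is the genuinely technical part.

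\textbf{The $L^2$ and $H^1$ estimates.} For $p=2$ the bound $\|\project u\|_{L^2(\Omega)}\le\|u\|_{L^2(\Omega)}$ is immediate, since $\project$ is the orthogonal projection onto the closed subspace $[Y_h]^3\subset L^2(\Omega)^3$. For the second estimate I would introduce a quasi-interpolation operator $\interpol$ onto $[Y_h]^3$ (for instance the Scott--Zhang interpolant, chosen so that it preserves the homogeneous Dirichlet condition) satisfying the standard bounds $\|u-\interpol u\|_{L^2(\Omega)}\le Ch\|u\|_{H^1(\Omega)}$ and $\|\interpol u\|_{H^1(\Omega)}\le C\|u\|_{H^1(\Omega)}$. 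Since $\project$ is the $L^2$-best approximation in $[Y_h]^3$, one gets $\|\project u-u\|_{L^2(\Omega)}\le\|\interpol u-u\|_{L^2(\Omega)}\le Ch\|u\|_{H^1(\Omega)}$, which is the first summand. For the gradient term I split $\nabla\project u=\nabla(\project u-\interpol u)+\nabla\interpol u$; the second piece is bounded by $C\|u\|_{H^1(\Omega)}$, and to the first piece, which is a finite element function, I apply the global inverse estimate of Lemma~\ref{lem:inverse_estimate} (here quasi-uniformity, Assumption~\ref{ass2}, is essential):
\[
\|\nabla(\project u-\interpol u)\|_{L^2(\Omega)}\le Ch^{-1}\|\project u-\interpol u\|_{L^2(\Omega)}\le Ch^{-1}\bigl(\|\project u-u\|_{L^2(\Omega)}+\|u-\interpol u\|_{L^2(\Omega)}\bigr)\le C\|u\|_{H^1(\Omega)}.
\]
Combined with $\|\project u\|_{L^2(\Omega)}\le\|u\|_{L^2(\Omega)}$, this yields $h\|\project u\|_{H^1(\Omega)}\le Ch\|u\|_{H^1(\Omega)}$.

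\textbf{The $L^p$-stability.} The heart of the matter is $L^\infty$-stability, $\|\project u\|_{L^\infty(\Omega)}\le C\|u\|_{L^\infty(\Omega)}$. Writing $\project u=\sum_i c_i\phi_i$ in the nodal basis $\{\phi_i\}$ of $[Y_h]^3$, the coefficient vector solves $Mc=b$ with mass matrix $M_{ij}=\int_\Omega\phi_i\cdot\phi_j$ and $b_i=\int_\Omega u\cdot\phi_i$. On a quasi-uniform mesh $M$ is symmetric positive definite, sparse with respect to the mesh graph, and spectrally equivalent to $h^3 I$; hence, by the classical Demko-type decay estimate for inverses of banded well-conditioned matrices, $|(M^{-1})_{ij}|\le Ch^{-3}\gamma^{\,\operatorname{dist}(x_i,x_j)/h}$ for some $\gamma\in(0,1)$. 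Since $|b_i|\le Ch^3\|u\|_{L^\infty(\Omega)}$ and, by quasi-uniformity, the number of nodes at mesh-distance $\ell$ from a fixed node grows at most polynomially in $\ell$, summing the resulting geometric series gives $|c_i|\le C\|u\|_{L^\infty(\Omega)}$ uniformly in $i$ and $h$, i.e.\ $\|\project u\|_{L^\infty(\Omega)}\le C\|u\|_{L^\infty(\Omega)}$. The $L^1$-bound then follows by duality, using that $\project$ is self-adjoint on $L^2(\Omega)^3$:
\[
\|\project u\|_{L^1(\Omega)}=\sup_{\|\psi\|_{L^\infty(\Omega)}\le 1}\int_\Omega\project u\cdot\psi\,\diff x=\sup_{\|\psi\|_{L^\infty(\Omega)}\le 1}\int_\Omega u\cdot\project\psi\,\diff x\le\|u\|_{L^1(\Omega)}\sup_{\|\psi\|_{L^\infty(\Omega)}\le 1}\|\project\psi\|_{L^\infty(\Omega)}\le C\|u\|_{L^1(\Omega)}.
\]
Riesz--Thorin interpolation between the $L^1$ and $L^\infty$ bounds (the $L^2$ bound is also recovered this way) then yields $\|\project u\|_{L^p(\Omega)}\le C\|u\|_{L^p(\Omega)}$ for all $1\le p\le\infty$.

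\textbf{Main obstacle.} The only non-routine ingredient is the $L^\infty$-stability, i.e.\ the exponential off-diagonal decay of $M^{-1}$ on quasi-uniform meshes; everything else combines best approximation, standard interpolation error estimates, the inverse inequality, a duality argument, and interpolation of operators. One should also be slightly careful with the homogeneous Dirichlet condition encoded in $Y_h$: the quasi-interpolant must be chosen to preserve it, and for the $L^2$ approximation estimate applied to a generic $u\in H^1(\Omega)$ only a boundary-layer correction of size $O(h)$ is needed. These technical points are carried out in detail in~\cite{wahlbin} and~\cite{guermond}, which we simply invoke here.
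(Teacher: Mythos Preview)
The paper does not prove this lemma; it merely cites \cite{wahlbin} and \cite{guermond}. Your outline is the standard route and is essentially correct: best approximation plus a trace-preserving quasi-interpolant plus the inverse estimate of Lemma~\ref{lem:inverse_estimate} for the $H^1$-bound, and the Demko-type exponential decay of the inverse mass matrix for $L^\infty$-stability followed by duality and Riesz--Thorin for general $p$. These are exactly the arguments behind the cited references.

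There is, however, a genuine gap in your treatment of the boundary condition. Your claim that for a generic $u\in H^1(\Omega)$ ``only a boundary-layer correction of size $O(h)$ is needed'' is false. Take $u\equiv 1$, so $\|u\|_{H^1(\Omega)}=|\Omega|^{1/2}$. Any $v_h\in[Y_h]^3$ vanishes on $\partial\Omega$; the strip of elements touching $\partial\Omega$ has volume of order $h$ and the pointwise error there is of order $1$, so $\|u-v_h\|_{L^2(\Omega)}\gtrsim h^{1/2}$, not $h$. The same example shows that $\|\nabla\project u\|_{L^2(\Omega)}\sim h^{-1/2}$, so the $H^1$-stability fails as well. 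The second estimate of the lemma therefore cannot hold for all $u_2\in H^1(\Omega)$; it requires $u_2\in H^1_0(\Omega)$, which is precisely the setting in which your Scott--Zhang argument goes through. This is an imprecision in the paper's statement rather than in your method---every application of the lemma in the paper is to compactly supported test functions, hence in $H^1_0(\Omega)$---but you should not claim to have repaired it with an $O(h)$ boundary-layer term.
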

For the projection onto the finite element space for the velocity $V_h$, $\projectV$ admits the error and stability estimates 
\begin{align}\label{projection_approximation_vh}
\begin{split}
\LTwoNorm{\projectV (\fat v) - \fat v} &\leq C h \LTwoNorm{\nabla \fat v}\\ \Vert \projectV \vv - \vv \Vert _{H^1(\Omega)} + h \Vert \projectV \vv \Vert _{H^2(\Omega)}& \leq C h  \Vert \vv \Vert  _{H^2(\Omega)} 
\end{split}
\end{align}
for all $\fat v \in H^1_{0,\sigma} ( \Omega)$ and $\vv \in H^2(\Omega)\cap \V$. This follows from~\cite[Lem.~4.3]{projection} and the regularity of the Stokes problem on convex polygons (see~\cite{RegSobDauge}, see also~\cite[Proposition~2]{regStokes3} or~\cite[Cor.~1.8]{RegStokes}).

\subsection{Interpolation and mass-lumping}\label{sec:interpol}
We define the nodal interpolation operator $\interpol : \Cont(\Omega) \to [Z_h]^3$ by
\begin{align*}
    \interpol (f) \coloneqq \sum_{z \in \mathcal{N}_h} f(z) \phi_z,
\end{align*}
for all functions $f \in  C(\Omega) $, where $\mathcal{N}_h$ is the set of all nodes in our mesh. 
For continuous functions $y_1,y_2 \in C(\Omega; \mathbb{R}^3)$ we introduce the lumped inner product (\textit{cf.} \cite{mass_lumping_raviart,ciavaldini}) and the according norm as
\begin{align}
\begin{aligned}\label{def:lumped_norm_product}
    (y_1,y_2)_h &\coloneqq 
    \int_{\Omega}
    \interpol (y_1 \cdot y_2) \diff x
    =\sum_{z \in \mathcal{N}_h} y_1(z) \cdot y_2(z) \int_{\Omega} \phi_z \diff x, \\
    \norm{y_1}_h^2 &\coloneqq (y_1,y_1)_h.
\end{aligned}
\end{align}
For the interpolation operator we observe the following standard error estimate:
\begin{lemma}\label{thm:interpolation_error}
Under the assumptions (\ref{ass1}-\ref{ass4}), let $3/2 < p \leq \infty$. Then there exists a constant $C > 0$ independent of $h$ and the function $f$, such that 
\begin{align*}
    \norm{f - \interpol (f)}_{W^{k,p}(\Omega)}
    \leq
    C h^{2-k}
    \abs{f}_{W^{2,p}(\Omega)}
\end{align*}
holds for all $0 \leq k \leq 2$ and $f\in  W^{2,p}(
\Omega)$.
\end{lemma}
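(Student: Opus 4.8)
The plan is to prove the estimate elementwise, by combining the Bramble--Hilbert lemma on a fixed reference tetrahedron with an affine scaling argument, and then to obtain the global bound by summing over the triangulation. First I would record that the hypothesis $p>3/2$ is exactly what makes the statement meaningful: in $\mathbb R^3$ one has the Sobolev embedding $W^{2,p}(K)\hookrightarrow \Cont(\overline K)$ for each tetrahedron $K$, so the nodal values defining $\mathcal I_h$ are bounded linear functionals on $W^{2,p}$, the interpolant $\mathcal I_h f$ is well defined, and $\mathrm{id}-\hat{\mathcal I}$ is a bounded operator $W^{2,p}(\hat K)\to W^{k,p}(\hat K)$ for $0\le k\le 2$. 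Since the reference nodal interpolant $\hat{\mathcal I}$ reproduces affine functions, $\mathrm{id}-\hat{\mathcal I}$ annihilates $\mathbb P_1(\hat K)$, so the Bramble--Hilbert lemma provides a constant $\hat C=\hat C(\hat K,p,k)$ with $\norm{\hat f-\hat{\mathcal I}\hat f}_{W^{k,p}(\hat K)}\le \hat C\,\abs{\hat f}_{W^{2,p}(\hat K)}$ for all $\hat f\in W^{2,p}(\hat K)$.

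Next, for $K\in\mathcal T^h$ I would let $F_K\colon\hat K\to K$ be the affine bijection, set $\hat f=f\circ F_K$, and note that $\widehat{\mathcal I_h f}=\hat{\mathcal I}\hat f$ because $F_K$ maps nodes to nodes. Transporting the reference estimate back to $K$ uses the standard scaling of Sobolev seminorms under affine maps, namely bounds of the form $\abs{v}_{W^{m,p}(K)}\le C\,\abs{\det F_K'}^{1/p}\,\rho_K^{-m}\,\abs{v\circ F_K}_{W^{m,p}(\hat K)}$ and $\abs{\hat v}_{W^{m,p}(\hat K)}\le C\,\abs{\det F_K'}^{-1/p}\,h_K^{m}\,\abs{v}_{W^{m,p}(K)}$, where $h_K=\diam(K)$ and $\rho_K$ is the inradius. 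Chaining these with the Bramble--Hilbert bound, the Jacobian factors cancel and one is left with $\norm{f-\mathcal I_h f}_{W^{k,p}(K)}\le C\,h_K^{2-k}\,(h_K/\rho_K)^{k}\,\abs{f}_{W^{2,p}(K)}$; by the quasi-uniformity assumption~(\ref{ass2}) the shape ratio $h_K/\rho_K$ is bounded by a fixed $\sigma$, so the local estimate sharpens to $\norm{f-\mathcal I_h f}_{W^{k,p}(K)}\le C\,h_K^{2-k}\,\abs{f}_{W^{2,p}(K)}$ with $C$ depending only on $\hat K$, $p$, $k$ and $\sigma$.

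It then remains to assemble the global bound: if $p<\infty$ I would raise the local estimate to the power $p$, sum over $K\in\mathcal T^h$, use $h_K\le h$ so that $h_K^{(2-k)p}\le h^{(2-k)p}$, and take $p$-th roots; if $p=\infty$ one takes the maximum over $K$ instead. Either way this yields $\norm{f-\mathcal I_h f}_{W^{k,p}(\Omega)}\le C\,h^{2-k}\,\abs{f}_{W^{2,p}(\Omega)}$, as claimed. The step I expect to demand the most care is the scaling argument of the middle paragraph: one has to keep precise track of the powers of $h_K$ and $\rho_K$ (handling the several seminorm orders $j\le k$ separately) and ensure that quasi-uniformity enters only through the bound on $h_K/\rho_K$, so that the Bramble--Hilbert constant depends on the reference element alone; the endpoint $p=\infty$ and the fact that the spaces are $L^p$ rather than $L^2$ add nothing beyond replacing integrals by suprema throughout. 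Equivalently, the whole statement is the standard local interpolation estimate, e.g.~\cite[Thm.~4.4.20]{Scott_Brenner_FEM}, specialized to the space $\mathbb P_1$ on tetrahedra.
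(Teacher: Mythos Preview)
Your proof is correct and follows precisely the standard Bramble--Hilbert plus affine scaling argument; the paper itself does not give a proof but simply cites \cite[Thm.~4.4.20]{Scott_Brenner_FEM}, the same reference you invoke at the end. So your approach coincides with what the paper defers to.
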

The above theorem is standard and can for example be found in \cite[Thm. 4.4.20]{Scott_Brenner_FEM}.
There exists a generic constant $c_L>0$ independent of $h>0$ such that for functions $y_1,y_2 \in [Z_h]^3$ the estimates
\begin{align}
   \LTwoNorm{y_1} \leq   \norm{y_1}_h &\leq c_L\LTwoNorm{y_1} \label{lump_estim_1} 
\end{align}
hold. 
\begin{lemma}\label{cor:lumping_error_yh_yh}
Let $\frac{1}{p}+\frac{1}{q}=1$. There exists a generic constant $C>0$ independent of $h>0$ such that 
\begin{align}
\begin{aligned}\label{lump_estim_2}
   \vert (y_1,y_2)_h -(y_1,y_2)\vert
   &\leq
   C h \lebnorm{ y_1}{p} \lebnorm{\nabla y_2}{q}  
   \\
      \vert (y_h,g)_h -(y_h,g)\vert
   &\leq
  C h \lebnorm{y_h}{p}
    \sobnorm{ g}{2}{q} 
    ,\, \, 
\end{aligned}
\end{align}
for all $y_1,y_2, y_h \in [Z_h]^3$, $ g\in W^{2,q}(\Omega) $ with $q>3/2$.
\end{lemma}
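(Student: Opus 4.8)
The plan is to reduce both inequalities to an elementwise quadrature error that is controlled by the standard nodal interpolation estimate of Lemma~\ref{thm:interpolation_error}, combined with the inverse estimate of Lemma~\ref{lem:inverse_estimate}.

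\emph{First estimate.} Using the definition of the lumped product in~\eqref{def:lumped_norm_product} we write $(y_1,y_2)_h-(y_1,y_2)=\sum_{K\in\mathcal{T}_h}\int_K\bigl(\interpol(y_1\cdot y_2)-y_1\cdot y_2\bigr)\diff x$. On each $K$ the integrand is the $\mathbb{P}_1$ nodal interpolation error of the polynomial $y_1\cdot y_2\in\mathbb{P}_2(K)$ (as $y_1,y_2\in[\mathbb{P}_1(K)]^3$), and a scaling/Bramble--Hilbert argument on the reference simplex — which needs no restriction on integrability exponents, since on a fixed element a polynomial of fixed degree is automatically continuous — yields $\|\interpol(y_1\cdot y_2)-y_1\cdot y_2\|_{L^1(K)}\le C h_K^2\,\abs{y_1\cdot y_2}_{W^{2,1}(K)}$. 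Because $y_1,y_2$ are affine on $K$, the pointwise bound $\abs{D^2(y_1\cdot y_2)}\le 2\abs{\nabla y_1}\abs{\nabla y_2}$ holds, so Hölder's inequality with exponents $p,q$ gives $\abs{y_1\cdot y_2}_{W^{2,1}(K)}\le 2\,\lebnormK{\nabla y_1}{p}\lebnormK{\nabla y_2}{q}$. Summing over $K$, bounding $h_K\le h$ and applying the discrete Hölder inequality (using $\tfrac1p+\tfrac1q=1$, so that the leftover factor $\abs{K}^{1-1/p-1/q}$ equals $1$), we arrive at $\abs{(y_1,y_2)_h-(y_1,y_2)}\le C h^2\lebnorm{\nabla y_1}{p}\lebnorm{\nabla y_2}{q}$. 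Finally, the inverse estimate of Lemma~\ref{lem:inverse_estimate}, applied to $y_1\in[Z_h]^3$ with $m=0$, $l=1$ and both its integrability exponents taken equal to $p$, turns $h\lebnorm{\nabla y_1}{p}$ into $C\lebnorm{y_1}{p}$, which is the claimed bound.

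\emph{Second estimate.} Since $q>3/2$, the Sobolev embedding $W^{2,q}(\Omega)\hookrightarrow\Cont(\ov\Omega)$ makes the nodal interpolant $\interpol g$ well defined, and we split $g=\interpol g+(g-\interpol g)$. For the interpolated part we invoke the first estimate with $y_2=\interpol g\in[Z_h]^3$, obtaining $\abs{(y_h,\interpol g)_h-(y_h,\interpol g)}\le C h\,\lebnorm{y_h}{p}\lebnorm{\nabla\interpol g}{q}$, and $\lebnorm{\nabla\interpol g}{q}\le\lebnorm{\nabla(\interpol g-g)}{q}+\lebnorm{\nabla g}{q}\le C\sobnorm{g}{2}{q}$ by Lemma~\ref{thm:interpolation_error}. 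For the remainder, the key observation is that $y_h\cdot(g-\interpol g)$ vanishes at every node of the mesh (as $\interpol g$ matches $g$ there), hence $\interpol\bigl(y_h\cdot(g-\interpol g)\bigr)\equiv 0$ and therefore $(y_h,g-\interpol g)_h=0$; what is left is bounded directly by $\abs{(y_h,g-\interpol g)}\le\lebnorm{y_h}{p}\lebnorm{g-\interpol g}{q}\le C h^2\lebnorm{y_h}{p}\abs{g}_{W^{2,q}(\Omega)}$, again by Lemma~\ref{thm:interpolation_error}. Adding the two contributions gives $\abs{(y_h,g)_h-(y_h,g)}\le C h\,\lebnorm{y_h}{p}\sobnorm{g}{2}{q}$.

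\emph{Main obstacle.} None of the steps is genuinely deep; the only points requiring care are establishing the elementwise $L^1$-interpolation bound for the quadratic polynomial $y_1\cdot y_2$ without appealing to the $p>3/2$ hypothesis of Lemma~\ref{thm:interpolation_error} (which is why one argues by scaling on the reference element rather than quoting that lemma globally), and the exponent bookkeeping that makes the passage from elementwise to global norms clean. The observation that the lumped product $(y_h,g-\interpol g)_h$ vanishes identically is what allows the regularity of the non-discrete function $g$ to enter only through the standard interpolation error estimate.
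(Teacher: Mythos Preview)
Your proof is correct and follows essentially the same route as the paper: elementwise interpolation error for $y_1\cdot y_2$, collapse of the Hessian to $\nabla y_1:\nabla y_2$ for affine functions, Hölder, and a final inverse estimate; for the second inequality the same split $g=\interpol g+(g-\interpol g)$ is used, with the lumped remainder vanishing and the discrete part handled by the first estimate. The only cosmetic difference is that you bound the local interpolation error in $L^1(K)$ (justified by Bramble--Hilbert on the reference element, since $y_1\cdot y_2$ is a polynomial), whereas the paper stays in $L^2(K)$ and carries an extra factor $\|1\|_{L^2(K)}$ so as to quote Lemma~\ref{thm:interpolation_error} directly with its $p>3/2$ hypothesis.
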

\begin{proof}
We proceed as in \cite{ciavaldini} and find for $ y_1$, $y_2\in Z_h$ 
\begin{align}
\begin{aligned}\label{proof_lump_convergence}
   \vert (y_1,y_2)_h -( y_1,  y_2)\vert
   &\leq
   \int_{\Omega} \left \vert \interpol (y_1\cdot y_2) - y_1 \cdot y_2 \right \vert \diff x 
  \\& 
  \leq
  \sum_{K\in \mathcal{T}^h}
  \norm{ \interpol (y_1\cdot y_2) - y_1\cdot y_2}_{L^2(K)}
  \norm{1}_{L^2(K)}
  \\&
  \leq
  Ch^2
  \sum_{K\in \mathcal{T}^h}
  \norm{\nabla^2 ( y_1 \cdot  y_2)}_{L^2(K)}
  \norm{1}_{L^2(K)}
  \\&
  \leq
  Ch^2
  \sum_{K\in \mathcal{T}^h}
  \norm{\nabla  y_1 : \nabla   y_2}_{L^2(K)}
  \norm{1}_{L^2(K)}
  \\&
  \leq
  Ch^{2}
  \sum_{K\in \mathcal{T}^h}
  \norm{\nabla  y_1}_{L^p(K)} \norm{\nabla  y_2}_{L^q(K)}
  \\&
  \leq
  Ch^{2}
   \lebnorm{ \nabla  y_1}{p} \lebnorm{\nabla  y_2}{q}
   \\&
  \leq
  Ch
   \lebnorm{  y_1}{p} \lebnorm{\nabla  y_2}{q}.
 \end{aligned}
\end{align}
The first inequality is a triangle inequality, the second one uses H\"older's inequality, the third bounds the interpolation error by Lemma~\ref{thm:interpolation_error}. Since $[Z_h]^3$ consists of piecewise affine-linear functions the second derivative of a function in $[Z_h]^3$ vanishes. This together with H\"older's inequality, local and global inverse estimates~(\textit{cf.}~\ref{lem:inverse_estimate}) allow to conclude. 
Note that the choice of exponents in the third step is necessary in order to make sure that we can indeed apply Lemma~\ref{thm:interpolation_error}. 

In case that one function is not a finite element functions, we have to estimate the difference with the interpolated function, \textit{i.e.,}
\begin{align}
\begin{aligned}\label{s}
    \vert (\interpol  y_h, \interpol g) - (y_h,g)\vert &= \vert ( y_h, \interpol g) - (y_h,g)\vert
    \\
    &
    \leq
    C
    \lebnorm{ y_h}{p}
    \lebnorm{(\interpol - I) g}{q}
    \leq Ch^{2}
    \lebnorm{ y_h}{p}
    \lebnorm{\nabla^2 g}{q}\,,
\end{aligned}
\end{align}
and estimate the last factor on the right-hand side of \eqref{proof_lump_convergence} by
\begin{align*}
    \lebnorm{ \nabla \interpol g}{q}
    &\leq
    \lebnorm{ \nabla (\interpol-I) g}{q}+\lebnorm{ \nabla g}{q}
    \\
    &\leq
    Ch \lebnorm{ \nabla^2 g}{q} + \lebnorm{ \nabla g}{q}
    \leq C \sobnorm{ \nabla g}{1}{q}.
\end{align*}
\end{proof}
We introduce another interpolation operator $\interad: L^1(\Omega; \Rr^3 ) \ra [Z_h]^3$ by 
\begin{equation}
\interad ( g ) : = \sum_{z\in\mathcal{N}_h} \frac{\int_\Omega g (x) \phi_z(x)\diff x }{\int_{\Omega} \phi_z(x)\diff x} \phi_z\,.
\end{equation}
We observe the identity \begin{align}
    \begin{split}
\label{interpoladjiont}
\left (  \inter{f} , g\right )&= \int_\Omega \sum_{z\in\mathcal{N}_h} f(z) \phi_z(x) g(x) \diff x
\\&=   \sum_{z\in\mathcal{N}_h} f(z) \int_\Omega\phi_z(x) g(x) \diff x  = \left ( f , \interad g \right )_h \,.
\end{split}
\end{align}
\begin{lemma}\label{lem:adint}
Let $f \in \Cont(\ov\Omega)$. Then it holds
\begin{align*}
    \Vert f - \interad f \Vert_{\Cont(\ov\Omega)} \to 0 \text{ as }h\ra 0\,.
\end{align*}
\end{lemma}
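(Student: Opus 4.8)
The plan is to use that the Lagrange hat functions $\{\phi_z\}_{z\in\mathcal N_h}$ associated with \emph{all} nodes of the mesh form a nonnegative partition of unity on $\ov\Omega$, which makes $\interad f$ pointwise a convex combination of local weighted averages of $f$; the error is then nothing but the oscillation of $f$ over patches of diameter $O(h)$. Concretely, I would first record that $\sum_{z\in\mathcal N_h}\phi_z\equiv 1$ on $\ov\Omega$ with $\phi_z\ge 0$ and $\int_\Omega\phi_z\,\diff x>0$, and abbreviate $c_z(f):=\bigl(\int_\Omega f\phi_z\,\diff x\bigr)\big/\bigl(\int_\Omega\phi_z\,\diff x\bigr)$, so that $\interad f=\sum_{z\in\mathcal N_h}c_z(f)\phi_z$ and therefore, for every $x\in\ov\Omega$,
\[
f(x)-\interad f(x)=\sum_{z\in\mathcal N_h}\bigl(f(x)-c_z(f)\bigr)\phi_z(x).
\]

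Next I would fix $x\in\ov\Omega$ and keep only the indices $z$ with $\phi_z(x)>0$. For each such $z$ one has $x\in\supp\phi_z=\omega_z$, the union of all elements having $z$ as a vertex, and $c_z(f)$ is a $\phi_z$-weighted average of $f$ over $\omega_z$ with nonnegative weight; hence $c_z(f)$ lies between $\inf_{\omega_z}f$ and $\sup_{\omega_z}f$, so that
\[
\bigl|f(x)-c_z(f)\bigr|\le \sup_{y,y'\in\omega_z}\bigl|f(y)-f(y')\bigr|\le \omega_f(Ch),
\]
where $\omega_f$ denotes the modulus of continuity of $f$ and $C$ depends only on the quasi-uniformity constant of $\mathcal T^h$, since $\diam\omega_z\le Ch$. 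Summing against the partition of unity yields, uniformly in $x$,
\[
\bigl|f(x)-\interad f(x)\bigr|\le \omega_f(Ch)\sum_{z\in\mathcal N_h}\phi_z(x)=\omega_f(Ch),
\]
i.e.\ $\Vert f-\interad f\Vert_{\Cont(\ov\Omega)}\le \omega_f(Ch)$. Finally, since $f$ is continuous on the compact set $\ov\Omega$ it is uniformly continuous, so $\omega_f(Ch)\to 0$ as $h\to 0$, which gives the claim.

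There is no genuine obstacle in this argument: it is entirely elementary once the partition-of-unity structure is exploited. The only points deserving (routine) care are that the full Lagrange basis over $\mathcal N_h$ — including the boundary nodes used in the definition of $\interad$ — indeed satisfies $\sum_z\phi_z\equiv 1$, and that a $\phi_z$-weighted average of $f$ cannot exceed the oscillation of $f$ over the support patch $\omega_z$, whose diameter is bounded by $Ch$ under assumptions~(\ref{ass1}--\ref{ass4}).
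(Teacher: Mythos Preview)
Your proof is correct and follows essentially the same approach as the paper's: both exploit the partition of unity $\sum_{z}\phi_z\equiv 1$ with $\phi_z\ge 0$ to write $f(x)-\interad f(x)$ as a convex combination of terms $f(x)-c_z(f)$, bound each such term by the oscillation of $f$ over $\supp\phi_z$, and conclude via uniform continuity of $f$ on the compact set $\ov\Omega$. Your use of the modulus of continuity and the explicit remark that $c_z(f)$ lies between $\inf_{\omega_z}f$ and $\sup_{\omega_z}f$ is slightly more polished than the paper's presentation, but the argument is the same.
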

\begin{proof}
Consider any $y\in \ov\Omega$. We obtain the pointwise estimate
\begin{align*}
    f(y)- \interad f(y) &=
    \sum_z f(y) \phi_z(y) - \sum_z \int_\Omega f(x) \phi_z(x) \diff x \phi_z(y) \frac{1}{\int_\Omega \phi_z(x) \diff x}
    \\&
=
    \sum_z \phi_z(y) \frac{\int_\Omega [f(y)-f(x)] \phi_z(x) \diff x}{\int_\Omega \phi_z(x) \diff x}
    \\&
\leq
    \sum_z \phi_z(y) \max_{x \in supp(\phi_z)} \vert f(x)-f(y)\vert
    \\&
= \max_{z \in \mathcal{N}_h}
   \max_{x,s \in supp(\phi_z)} \vert f(x)-f(s)\vert ,
\end{align*}
where the last term converges to $0$ as $h\to 0$, since $g$ is continuous.
Above, we used that $   \sum_z  \phi_z(y) = 1 $ and that support of the shape functions $\phi_{ z}$ vanishes as $h\to 0$.
Since $y$ was arbitrary the assertion follows. 
\end{proof}
Further, we introduce the discrete Laplacian in a slightly different way than usual, since we further equip it with mass lumping. Then the discrete Laplacian $\Delta_h: \H{1} \to [Y_h]^3$ of a function $f\in \H{1}$ is defined via 
\begin{align}
    (-\Delta_h f,b)_h 
        & =
        (\nabla f,\nabla b) \quad \text{for all }b \in Y_h.\label{discreteLaplace}
\end{align}
Similarly, we define an adapted projection via $\projectL:L^2(\Omega;\Rr^3)\ra [Z_h]^3 $ via 
\begin{equation}
(\projectL f , b)_h = (f , b) \quad\text{for all }b\in [Z_h]^3\,.\label{newproject}
\end{equation}
This choice allows to infer with~\eqref{lump_estim_1} that 
\begin{equation}
\hnorm{\projectL f } ^2 = (\projectL f , \projectL f)_h = (f ,\projectL f) \leq \Vert f \Vert_{L^2(\Omega)} \Vert \projectL f\Vert_{L^2(\Omega)} \leq \Vert f \Vert_{L^2(\Omega)}  \hnorm{\projectL f } \,,\label{projectest}
\end{equation}
which implies together with~\eqref{lump_estim_1} that $\LTwoNorm{\projectL f}\leq  \hnorm{\projectL f }\leq  \Vert f \Vert_{L^2(\Omega)}  $. 
\begin{remark}[Adapted discretization of convection term]\label{rem:proj}
In comparison to the discretization in~\cite{lasarzik_main}, we use the above projection $\projectL$  in the definition of the convection term in comparison to the $L^2$-projection~$\project$. This change is needed in order to show the convergence to the finer concept of energy-variational solutions. In~\cite{lasarzik_main} only the convergence to dissipative solutions was shown, which is a  bigger class of solutions. One key ingredient for this improvement are the above norm-equivalences. 
\end{remark}

\section{Discrete system\label{sec:dis}}
In order to restrict ourselves to finite element spaces with a zero trace, we decompose the spatial approximation of our director into its interior and its non-homogeneous Dirichlet boundary condition, \textit{i.e.}
\begin{align}\label{decomposition_director}
    d^j = {d^j_\circ} + \interpol{\fat d_0},
\end{align}
with $\tr(d^j_\circ)=0$ and $\fat d_0 \in  H^{2}(\Omega)$. 

\begin{scheme}{\ref{discrete_scheme}}
Let $(v^{0}, d^{0})= (\projectV v_0, \interpol d_0)$.
For $1 \leq j\leq J$ and $(v^{j-1}, d^{j-1}_\circ )\in U_h$, we want to find $(v^{j}, d^{j}_\circ )\in U_h$, such that
\begin{subequations}\label{discrete_scheme}
\begin{align}
    \begin{aligned}\label{discrete_scheme_a}
        (\discreteDiff_t v^j,a) 
       +((&v^{j-1}  \cdot \nabla) v^j,a) + \frac{1}{2} ([\nabla \cdot v^{j-1}]v^j,a) \\
        &-([\projectL{\nabla d^{j-1}}]^T [d^{j-1/2}\times (d^{j-1/2} \times \Delta_h d^{j-1/2} )],a)_h 
        +(T_L^j,\nabla a) = 0 \,,
    \end{aligned}&\\
    \begin{aligned}\label{discrete_scheme_c}
        &(\discreteDiff_t d^j,c)_h 
        + (d^{j-1/2}\times [\projectL{\nabla d^{j-1}}v^j], d^{j-1/2} \times c)_h 
        -( (\nabla v^{j})_{skw}d^{j-1/2}, c)_h  
        \\
        &{+}\lambda (d^{j-1/2} \times [(\nabla v^{j})_{sym}d^{j-1/2}],d^{j-1/2} \times c)_h 
        {-}(d^{j-1/2} \times \Delta_h d^{j-1/2},d^{j-1/2} \times c)_h
        = 0,
    \end{aligned}&
\end{align}
\end{subequations}
for all $(a,c)\in U_h$. Hereby we discretize the Leslie stress tensor as
\begin{align}
\begin{aligned}\label{leslie_stress_tensor_num_dischrete}
    (T_L^j, \nabla a) \coloneqq & (T_D^j, \nabla a)
    +\lambda (d^{j-1/2} \times [(\nabla a)_{sym}d^{j-1/2}],d^{j-1/2} \times \Delta_h d^{j-1/2})_h  \\&
    +( (\nabla a)_{skw} \Delta_h d^{j-1/2}, d^{j-1/2})_h,
\end{aligned}
\end{align}
where $T^j_D$ collects the dissipative terms of the discrete Leslie stress tensor again, \textit{i.e.}
\begin{align*}
(T_D^j, \nabla a) \coloneqq & 
    (\mu_1 + \lambda^2) ((d^{j} \cdot (\nabla v^{j})_{sym} d^{j}),
    (d^{j} \cdot (\nabla a)_{sym} d^{j}) ) \\&
    + \mu_4 \left ( (\nabla v^{j})_{sym} , (\nabla a)_{sym}\right) 
    +(\mu_5 + \mu_6 - \lambda^2) \left( 
    (\nabla v^{j})_{sym}  d^{j},
    (\nabla a)_{sym}  d^{j}
    \right)\,.
\end{align*}

\end{scheme}
\begin{remark}[Features of the discrete system]
The mass lumping is applied on the director equation \eqref{discrete_scheme_c} to guarantee the unit-norm restriction of $d^j$ in every node. 
In order to find a discrete energy law, the associated terms in the momentum equation \eqref{discrete_scheme_a} and the discrete Laplacian are mass-lumped as well. Since the gradient of the director is piecewise constant and therefore not well-defined at the nodes of the mesh, we apply the mass-lumped $L^2$ projection $\projectL$ in the convection term. Note that this is computationally not too expensive, since it only effects the gradient of the past iterate. 

Additionally, we note that solving the discrete system amounts in introducing two additional coupled equations, one to calculate the discrete Laplacian~\eqref{discreteLaplace} and one to assure that the solution $v^j$ fulfills the discrete vanishing divergence condition~\eqref{discrete_inf_sup}. 
\end{remark}

\subsection{\textit{A priori} estimates}\label{sec:apri}
The proposed scheme is unconditionally solvable. 
\begin{proposition}[Existence of discrete solutions]
\label{discrete_existence_theorem}
Let $k,h>0$, $j \in \mathbb{N}$ and $1\leq j \leq J = \lfloor T/k \rfloor$.
Then there exists a solution $u^j=(v^j,d^j_\circ )\in U_h$ solving Scheme \eqref{discrete_scheme}.
\end{proposition}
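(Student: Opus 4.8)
The plan is to prove existence of a discrete solution $u^j=(v^j,d^j_\circ)\in U_h$ for the nonlinear algebraic system \eqref{discrete_scheme} via a topological fixed-point/degree argument. Since $U_h$ is finite-dimensional, it suffices to define a continuous map $F:U_h\to U_h$ whose zeros are exactly the solutions of Scheme~\eqref{discrete_scheme}, and then apply a consequence of Brouwer's fixed-point theorem: if $F$ is continuous and there is $R>0$ with $(F(u),u)_{\mathcal H}\ge 0$ for all $u$ with $\|u\|_{\mathcal H}=R$ (for a suitable inner product on $U_h$, namely the mixed product $(v,a)+(d_\circ,c)_h$ on $V_h\times[Y_h]^3$), then $F$ has a zero in the ball of radius $R$. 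The delicate point is that the discrete Laplacian $\Delta_h d^{j-1/2}$ and the divergence-free constraint are not given by explicit formulas in $u^j$; however, by \eqref{discreteLaplace} and \eqref{discrete_inf_sup} these are well-defined bounded \emph{linear} operators of $d^{j-1/2}$ and of $v^j$ respectively on the finite-dimensional spaces, so composing with them preserves continuity, and $F$ is genuinely a continuous map on $U_h$.

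The key step is the coercivity estimate for $(F(u^j),u^j)$, which is nothing but the discrete energy identity tested with $(a,c)=(v^j,d^j_\circ)$ — equivalently $(a,c)=(v^j, d^j - \interpol\fat d_0)$. First I would exploit the structural cancellations built into the scheme: testing \eqref{discrete_scheme_a} with $a=v^j$, the modified convection term $((v^{j-1}\cdot\nabla)v^j,v^j)+\tfrac12([\nabla\cdot v^{j-1}]v^j,v^j)$ vanishes by the standard skew-symmetrization; testing \eqref{discrete_scheme_c} with $c=d^{j-1/2}\times(d^{j-1/2}\times\Delta_h d^{j-1/2})$-type quantities and $v^j$ with the cross-product coupling terms, the Ericksen-stress and Leslie-stress couplings between the two equations cancel pairwise (this is precisely why the scheme is "structure-inheriting"), leaving only the nonnegative dissipative quadratic form $(T_D^j,\nabla v^j)$ — nonnegative because $\mu_4>0$, $\mu_5+\mu_6-\lambda^2\ge0$, $\mu_1+\lambda^2\ge0$ — together with $\|d^{j-1/2}\times\Delta_h d^{j-1/2}\|_h^2\ge0$. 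The discrete time-derivative terms produce, via the identity $(\discreteDiff_t f^j, f^j)=\tfrac1{2k}(\|f^j\|^2-\|f^{j-1}\|^2+\|f^j-f^{j-1}\|^2)$ applied in the $(\cdot,\cdot)$ and $(\cdot,\cdot)_h$ inner products respectively, a term bounded below by $\tfrac1{2k}(\|v^j\|_{L^2}^2-\|v^{j-1}\|_{L^2}^2)$ plus the analogous director contribution; the only term with an indefinite sign is the one coming from the boundary lift $\interpol\fat d_0$, which is a fixed datum and hence contributes at most a constant (depending on $h$, $k$, $v^{j-1}$, $d^{j-1}$, $\fat d_0$) times $\|u^j\|$, absorbable by Young's inequality. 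This yields $(F(u^j),u^j)\ge c_1\|u^j\|^2 - c_2$ for constants depending only on the previous iterate and data, giving the required sign on a sphere of radius $R=\sqrt{c_2/c_1}+1$.

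The main obstacle, and the part requiring care rather than routine work, is verifying the \emph{exact} pairwise cancellation of all the cross-terms coupling \eqref{discrete_scheme_a} and \eqref{discrete_scheme_c} once the lumped inner products and the adapted projection $\projectL$ are in place. One must check that $([\projectL\nabla d^{j-1}]^T[d^{j-1/2}\times(d^{j-1/2}\times\Delta_h d^{j-1/2})],v^j)_h$ matches, with opposite sign, the term $(d^{j-1/2}\times[\projectL\nabla d^{j-1}\,v^j],d^{j-1/2}\times c)_h$ at $c=\text{(appropriate test)}$, and similarly that the $\lambda$-dependent symmetric-gradient term in \eqref{leslie_stress_tensor_num_dischrete} cancels the corresponding term in \eqref{discrete_scheme_c}, and that the skew term $((\nabla a)_{skw}\Delta_h d^{j-1/2},d^{j-1/2})_h$ cancels $-((\nabla v^j)_{skw}d^{j-1/2},c)_h$ — using in each case the algebraic identity $(x\times y)\cdot z=-y\cdot(x\times z)$ and the fact that $\Delta_h$ is defined through the \emph{lumped} pairing \eqref{discreteLaplace} so that $(-\Delta_h d^{j-1/2}, b)_h=(\nabla d^{j-1/2},\nabla b)$ is exactly available. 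Once the energy identity is established the Brouwer argument is immediate; I would also remark that uniqueness is \emph{not} claimed (the system is nonlinear), only solvability, which is all that \textit{a priori} estimates in the subsequent sections require.
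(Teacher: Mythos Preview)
Your overall strategy (finite-dimensional Brouwer-type argument based on a coercivity estimate coming from the discrete energy identity) is the right one, but there is a genuine gap in how you set it up, and your proposal is internally inconsistent on exactly this point.

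The Brouwer corollary you invoke requires $(F(u),u)_{\mathcal H}\ge 0$ on a large sphere, where the \emph{same} $u$ is simultaneously the unknown and the test element. You declare the variable to be $u=(v^j,d^j_\circ)$ and say that $(F(u),u)$ is ``the discrete energy identity tested with $(a,c)=(v^j,d^j_\circ)$''. But testing \eqref{discrete_scheme_c} with $c=d^j_\circ$ does \emph{not} yield the cancellations you then go on to describe: the Ericksen-stress coupling, the $\lambda$-term and the skew term only cancel against their counterparts in \eqref{discrete_scheme_a} (with $a=v^j$) if the director equation is tested with $c=-\Delta_h d^{j-1/2}$. You seem to sense this, since a few lines later you speak of testing with ``$c=d^{j-1/2}\times(d^{j-1/2}\times\Delta_h d^{j-1/2})$-type quantities'', which is neither $d^j_\circ$ nor $-\Delta_h d^{j-1/2}$. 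With $c=d^j_\circ$ the time-derivative term gives control of $\|d^j\|_h^2$, not of $\|\nabla d^j\|_{L^2}^2$, and the cross terms do not disappear; with any other $c$ you are no longer computing $(F(u),u)$ and the Brouwer corollary does not apply.

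The paper resolves this by \emph{reparametrising the unknown}: the Brouwer variable is taken to be $u=(v^j,\,-\Delta_h d^{j-1/2})\in V_h\times [Y_h]^3$ rather than $(v^j,d^j_\circ)$. The dependence of the scheme on $d^j$ is then recovered through the discrete Green's operator $\mathcal G_h$ associated with $\Delta_h$ (a continuous bijection on $Y_h$ by Lax--Milgram), so that the resulting map $\Pi$ is still a well-defined continuous self-map of $U_h$. With this choice, $(\Pi(u),u)_{U_h}$ is exactly the left-hand side of the discrete energy identity \eqref{eq:discrete_energy_estimate}: the convection term vanishes, all coupling terms cancel pairwise, $(T_D^j,\nabla v^j)\ge 0$, $\|d^{j-1/2}\times\Delta_h d^{j-1/2}\|_h^2\ge 0$, and since $\discreteDiff_t d^j\in Y_h$ one has $(\discreteDiff_t d^j,-\Delta_h d^{j-1/2})_h=(\nabla\discreteDiff_t d^j,\nabla d^{j-1/2})=\tfrac12\discreteDiff_t\|\nabla d^j\|_{L^2}^2$ with no boundary-lift remainder. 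The coercivity then reads $\|v^j\|_{L^2}^2+\|\nabla d^j\|_{L^2}^2\ge \|v^{j-1}\|_{L^2}^2+\|\nabla d^{j-1}\|_{L^2}^2$, and one only needs an inverse estimate to translate this into a lower bound on $\|u\|_{U_h}=\|(v^j,-\Delta_h d^{j-1/2})\|$, which fixes the radius $R_{j-1}$. Your ``boundary-lift absorbed by Young'' workaround is therefore unnecessary once the correct variable is used; the missing ingredient in your argument is precisely this change of variable via $\mathcal G_h$.
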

\begin{proof}
 We show that the map $\Pi :U_h \to U_h$ implicitly defined by the above scheme \eqref{discrete_scheme}, whose zero is the next iterate in terms of $u = (v^j, - \Delta_h d^{j-1/2})$, fulfills
\begin{align}\label{inequality_existence_discrete}
( 
    \Pi(u),u )_{U_h}
    \geq 0
\end{align}
for all $\norm{u}_{U_h}\geq R_{j-1}$ for some constant $R_{j-1}>0$.
Where the dependency on $d^j$ has to be interpreted via the discrete Greens function $\mathcal{G}_h$ associated to the discrete Laplacian and given by 
\begin{align*}
    \left( \nabla \mathcal{G}_h (w ) , \nabla b\right) = ( w , b)_h \quad\text{for all }b\in Y_h\,.
\end{align*}
Noting that this operator is a continuous bijection due to Lax-Milgram, we observe that the associated mapping $\Pi$ corresponding to our discrete scheme is actually well defined and continuous. 
Then the existence of solutions to our discrete scheme \eqref{discrete_scheme}  follows from a non-linear version of Brouwer's fix-point theorem (\textit{cf.} \cite{Ruzicka}).
In particular, we evaluate 
\begin{align}
\begin{aligned}\label{eq:discrete_energy_estimate}
( \Pi (v^j,&  - \Delta_h d^{j-1/2}),(v^j, - \Delta_h d^{j-1/2}) )_{U_h}
 \\&=
     \frac{1}{2} \discreteDiff_t \LTwoNorm{v^j}^2
    + \frac{1}{2}\discreteDiff_t \LTwoNorm{\nabla d^j}^2
    +\frac{k}{2}  \LTwoNorm{\discreteDiff_t v^j}^2
    \\& \quad
    +(\mu_1 + \lambda^2) \lebnorm{d^{j} \cdot (\nabla v^{j})_{sym} d^{j}}{2}
    + \mu_4 \lebnorm{(\nabla v^{j})_{sym}}{2} \\&\quad
    +(\mu_5 + \mu_6 - \lambda^2) \lebnorm{(\nabla v^{j})_{sym}  d^{j} }{2}
    +\hnorm{\interpol (d^{j-1/2} \times \Delta_h d^{j-1/2})}^2 
    ,
\end{aligned}
\end{align}
where we used the discrete identities
\begin{align}
\begin{aligned}\label{discrete_identities}
    &
    (\discreteDiff_t v^j,v^j)
    =
    \frac{1}{2} \discreteDiff_t \LTwoNorm{v^j}^2 +\frac{k}{2}  \LTwoNorm{\discreteDiff_t v^j}^2\,,\quad
     (-\Delta_h d^{j-1/2},\discreteDiff_t d^j)_h  
    =\frac{1}{2} \discreteDiff_t \lebnorm{\nabla d^j}{2}^2 
    . 
\end{aligned}
\end{align}
Equation \eqref{eq:discrete_energy_estimate} is indeed positive, if 
\begin{align*}
      \LTwoNorm{v^j}^2
    + \lebnorm{\nabla d^j}{2}^2
    \geq
    \LTwoNorm{v^{j-1}}^2
    +
    \lebnorm{\nabla d^{j-1}}{2}^2
\end{align*}
holds, which only depends on $u^{j-1},k,h$. This is implied by choosing an appropriate norm, since we have
\begin{align*}
  \LTwoNorm{v^j}^2
    + \lebnorm{\nabla d^j}{2}^2
    + \lebnorm{\nabla d^{j-1}}{2}^2
    \geq
    \LTwoNorm{v^j}^2
    +
    \frac{ 2 (c_{l} h)^2}{3} \lebnorm{\Delta_h d^{j-1/2}}{2}^2
\end{align*}
Note that the second term stems from a simple claculation obtained by applying the inverse estimate:
\begin{align*}
    \lebnorm{\Delta_h d^{j-1/2}}{2}^2
    \leq
    c \hnorm{\Delta_h d^{j-1/2}}^2
    &=
    c (\nabla d^{j-1/2}, \nabla (-\Delta_h d^{j-1/2}))
    \\&
    \leq
    (c_l h)^{-1} \LTwoNorm{\nabla d^{j-1/2}} \LTwoNorm{\Delta_h d^{j-1/2}}.
\end{align*}
\end{proof}
\begin{proposition}[Properties of the discrete system]\label{thm_discrete_energy_equality}
Let $u^j=(v^j,d^j_\circ)\in U_h$ be a solution of the discrete scheme \eqref{discrete_scheme} for all $1\leq j\leq n$. Then the follwing energy equality holds,
\begin{align}\small
\begin{split}\label{discrete_energy_equ}
\frac{1}{2}\LTwoNorm{v^n}^2  & + \frac{1}{2}\LTwoNorm{\nabla d^n}^2
+\frac{k^2}{2} \sum_{j=1}^n   \LTwoNorm{\discreteDiff_t v^j}^2 
\\&
    +k \sum_{j=1}^n (\mu_1 + \lambda^2) \lebnorm{d^{j} \cdot (\nabla v^{j})_{sym} d^{j}}{2}^2
    + k \sum_{j=1}^n \mu_4 \lebnorm{(\nabla v^{j})_{sym}}{2}^2 \\&
    + k \sum_{j=1}^n (\mu_5 + \mu_6 - \lambda^2) \lebnorm{(\nabla v^{j})_{sym}  d^{j} }{2}^2
    + k \sum_{j=1}^n \hnorm{ \interpol (d^{j-1/2} \times \Delta_h d^{j-1/2}) }^2 \\& \quad\quad\quad\quad\quad\quad\quad\quad\quad\quad\quad\quad\quad\quad\quad\quad\quad\quad\quad
    =
    \frac{1}{2}  \LTwoNorm{v^0}^2
    + \frac{1}{2}\LTwoNorm{\nabla d^0}^2 
\end{split}
\end{align}
for all $ 1\leq n \leq J$.
The algebraic restriction is fulfilled in every node 
\begin{align}\label{unit_norm_restriction}
    \vert d^j(z) \vert =1 \text{ for all }z \in \mathcal{N}_h,
\end{align}    
where $\mathcal{N}_h$ is the set of all nodes of the mesh.
Additionally, it holds for some constant $C>0$ that
\begin{align}
k \sum_{l=1}^j \left [\norm{\discreteDiff_t d^l}_{L^{3/2}(\Omega)}^2
+ \norm{\discreteDiff_t v^l}_{(\velspace )^*}^2+  \lebnorm{\nabla v^{l}}{2}^2 
\right ]
\leq 
C \left  (  \mathcal{E}(v_0,d_0)  +1 \right )\,.
\label{bounds_temp_variation_inequ}
\end{align}
\end{proposition}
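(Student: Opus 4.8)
The plan is to obtain all three assertions from the Scheme~\eqref{discrete_scheme_a}--\eqref{discrete_scheme_c} by testing with carefully chosen elements of $U_h$, reusing the algebraic cancellations already visible in the proof of Proposition~\ref{discrete_existence_theorem}. For the energy equality, I would test~\eqref{discrete_scheme_a} with $a=v^j$ and~\eqref{discrete_scheme_c} with $c=-\Delta_h d^{j-1/2}$ and add the two resulting identities; this is precisely the evaluation~\eqref{eq:discrete_energy_estimate}, whose left-hand side vanishes because $u^j$ solves the scheme. Here the skew-symmetrised convection term $((v^{j-1}\cdot\nabla)v^j,v^j)+\tfrac12([\nabla\cdot v^{j-1}]v^j,v^j)$ drops out (as $v^j$ vanishes on $\partial\Omega$), the discrete Ericksen term in~\eqref{discrete_scheme_a} cancels against the director-transport term in~\eqref{discrete_scheme_c} via $(\fat a\times\fat b)\cdot(\fat a\times\fat c)=|\fat a|^2(\fat b\cdot\fat c)-(\fat a\cdot\fat b)(\fat a\cdot\fat c)$, and the two $\lambda$-terms as well as the two skew-terms of the discrete Leslie stress~\eqref{leslie_stress_tensor_num_dischrete} cancel against their counterparts in~\eqref{discrete_scheme_c} (for the skew part using $A_{\skw}\fat b\cdot\fat c=-A_{\skw}\fat c\cdot\fat b$). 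Together with the discrete identities~\eqref{discrete_identities} and the definition~\eqref{discreteLaplace} of $\Delta_h$, what remains is a per-step identity containing $\tfrac12\discreteDiff_t\LTwoNorm{v^j}^2+\tfrac12\discreteDiff_t\LTwoNorm{\nabla d^j}^2$, the three non-negative dissipation terms, $\tfrac{k}{2}\LTwoNorm{\discreteDiff_t v^j}^2$, and $\hnorm{\interpol(d^{j-1/2}\times\Delta_h d^{j-1/2})}^2$ (the last because the lumped product sees only nodal values, where $\interpol$ is the identity). Multiplying by $k$, summing over $j=1,\dots,n$ and telescoping the discrete time derivatives yields~\eqref{discrete_energy_equ}.

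For the unit-norm constraint~\eqref{unit_norm_restriction} I would distinguish boundary and interior nodes. On a boundary node $z$ the decomposition~\eqref{decomposition_director} gives $d^j(z)=\interpol\fat d_0(z)=\fat d_0(z)$, which has modulus one. On an interior node $z$ the function $c=\phi_z\,d^{j-1/2}(z)$ lies in $[Y_h]^3$ (since $\phi_z$ vanishes on $\partial\Omega$), and when inserted into~\eqref{discrete_scheme_c} all four cross-product terms vanish because at node $z$ they carry the factor $d^{j-1/2}(z)\times d^{j-1/2}(z)=0$, while $((\nabla v^j)_{\skw}d^{j-1/2},c)_h$ vanishes because $(\nabla v^j)_{\skw}(z)d^{j-1/2}(z)\cdot d^{j-1/2}(z)=0$. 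Since the remaining term equals $\tfrac{1}{2k}\bigl(|d^j(z)|^2-|d^{j-1}(z)|^2\bigr)\int_\Omega\phi_z\diff x$, we obtain $|d^j(z)|=|d^{j-1}(z)|$, and an induction starting from $|d^0(z)|=|\fat d_0(z)|=1$ closes the argument.

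For the \emph{a priori} bounds~\eqref{bounds_temp_variation_inequ}, the estimate on $k\sum_l\LTwoNorm{\nabla v^l}^2$ follows from~\eqref{discrete_energy_equ} (which controls $\mu_4 k\sum_l\|(\nabla v^l)_{\sym}\|_{L^2(\Omega)}^2$) and Korn's inequality, after noting $\mathcal{E}(v^0,d^0)\le C(\mathcal{E}(\fat v_0,\fat d_0)+1)$ by the $L^2$-contractivity of $\projectV$ and Lemma~\ref{thm:interpolation_error} applied to $\nabla\interpol\fat d_0$. For $k\sum_l\|\discreteDiff_t v^l\|_{(\velspace)^*}^2$ I would test~\eqref{discrete_scheme_a} with $a=\projectV\fat\varphi$, $\fat\varphi\in\velspace$, using $(\discreteDiff_t v^j,\fat\varphi)=(\discreteDiff_t v^j,\projectV\fat\varphi)$ (as $\discreteDiff_t v^j\in V_h$) and the $H^2$-stability~\eqref{projection_approximation_vh} of $\projectV$ together with $\velspace\hookrightarrow L^\infty(\Omega)\cap W^{1,6}(\Omega)$, so that $\LTwoNorm{\projectV\fat\varphi}+\Vert\nabla\projectV\fat\varphi\Vert_{L^6(\Omega)}+\Vert\projectV\fat\varphi\Vert_{L^\infty(\Omega)}\le C\Vert\fat\varphi\Vert_{\velspace}$. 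The skew-symmetric convection term is then bounded by $\LTwoNorm{v^{j-1}}\bigl(\LTwoNorm{\nabla v^j}\,\Vert\projectV\fat\varphi\Vert_{L^\infty(\Omega)}+\LTwoNorm{v^j}\,\Vert\nabla\projectV\fat\varphi\Vert_{L^6(\Omega)}\bigr)$, the $T_D^j$-terms by $C\|(\nabla v^j)_{\sym}\|_{L^2(\Omega)}$ (using $|d^j|\le1$ pointwise), and the term carrying $\projectL\nabla d^{j-1}$ by $\LTwoNorm{\projectL\nabla d^{j-1}}\le\LTwoNorm{\nabla d^{j-1}}$ via~\eqref{projectest}; the crucial observation is that in the Ericksen term and in the two remaining Leslie terms every occurrence of $\Delta_h d^{j-1/2}$ is paired with $d^{j-1/2}$ --- directly as $d^{j-1/2}\times\Delta_h d^{j-1/2}$, or rewritable that way by skew-symmetry and, in the Ericksen term, by $|d^{j-1/2}\times(d^{j-1/2}\times\Delta_h d^{j-1/2})|\le|d^{j-1/2}\times\Delta_h d^{j-1/2}|$ (again $|d^{j-1/2}|\le1$ at the nodes) --- so these terms are controlled by $\hnorm{\interpol(d^{j-1/2}\times\Delta_h d^{j-1/2})}\,\Vert\fat\varphi\Vert_{\velspace}$. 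Squaring, summing over $j$ and performing a Cauchy--Schwarz step in the time index for the terms linear in $\LTwoNorm{\nabla v^{j-1}}$, all coefficients are controlled by~\eqref{discrete_energy_equ}. For $k\sum_l\|\discreteDiff_t d^l\|_{L^{3/2}(\Omega)}^2$ I would test~\eqref{discrete_scheme_c} with the zero-trace local-average interpolant of $\fat\psi\in L^3(\Omega;\Rr^3)$, pass to the $L^2$-duality via~\eqref{interpoladjiont}, and control the skew term by $C\LTwoNorm{\nabla v^j}$ (since $L^2(\Omega)\hookrightarrow L^{3/2}(\Omega)$), the terms containing $\Delta_h d^{j-1/2}$ again by $\hnorm{\interpol(d^{j-1/2}\times\Delta_h d^{j-1/2})}$, and the convective term by $\LTwoNorm{\projectL\nabla d^{j-1}}\,\Vert v^j\Vert_{L^6(\Omega)}\le C\LTwoNorm{\nabla d^{j-1}}\LTwoNorm{\nabla v^j}$ --- which lies in $L^{3/2}(\Omega)$ exactly because $\tfrac12+\tfrac16=\tfrac23$, so this choice of exponent is forced; lumping discrepancies are of order $h$ and harmless by Lemma~\ref{cor:lumping_error_yh_yh}. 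Squaring and $k$-summing then closes the estimate, using the uniform bounds on $\LTwoNorm{v^j}$ and $\LTwoNorm{\nabla d^j}$ and the summed bounds on $\LTwoNorm{\nabla v^j}^2$ and $\hnorm{\interpol(d^{j-1/2}\times\Delta_h d^{j-1/2})}^2$ from~\eqref{discrete_energy_equ}.

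I expect the main obstacle to be the $(\velspace)^*$-bound on $\discreteDiff_t v$: since neither $\Delta_h d^{j-1/2}$ alone nor $\projectL\nabla d^{j-1}$ is bounded in any norm stronger than those controlled by the discrete energy --- their $L^\infty$- and $L^3$-norms blow up under mesh refinement by the inverse estimate of Lemma~\ref{lem:inverse_estimate} --- one must ensure that every factor which is not uniformly bounded is hit by a norm of the test function that is itself controlled by $\Vert\fat\varphi\Vert_{\velspace}$, with no recourse to $L^\infty$-bounds on mesh-dependent quantities. This is exactly why the Ericksen and Leslie coupling terms have to be kept in the cross-product form that the scheme produces, and why the pairing with $d^{j-1/2}$ (whose nodal values have modulus one by~\eqref{unit_norm_restriction}) is essential; a secondary technical nuisance is the systematic bookkeeping of the mass-lumping discrepancies, handled throughout by~\eqref{lump_estim_1} and Lemma~\ref{cor:lumping_error_yh_yh}.
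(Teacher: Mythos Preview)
Your treatment of the energy equality~\eqref{discrete_energy_equ}, the nodal unit-norm constraint~\eqref{unit_norm_restriction}, the Korn step, and the $(\velspace)^*$-bound on $\discreteDiff_t v^j$ is correct and matches the paper's argument essentially line by line (including the crucial rewriting of the skew Leslie term as a cross product so that $\Delta_h d^{j-1/2}$ only appears paired with $d^{j-1/2}$).

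The one genuine departure is your $L^{3/2}$-bound on $\discreteDiff_t d^j$. The paper tests~\eqref{discrete_scheme_c} with $c=\project\fat\phi$ for $\fat\phi\in L^3$, so that $(\discreteDiff_t d^j,\fat\phi)=(\discreteDiff_t d^j,\project\fat\phi)$ exactly, but then has to control the lumping discrepancy $(\discreteDiff_t d^j,\project\fat\phi)-(\discreteDiff_t d^j,\project\fat\phi)_h$; a direct appeal to Lemma~\ref{cor:lumping_error_yh_yh} would cost an inverse-estimate factor $h^{-1}$ that exactly cancels the gain, so the paper instead performs a local argument, estimating $\|\discreteDiff_t d^j\|_{L^2(K)}$ element-wise via the scheme and reabsorbing an $h^{-1/2}$ from a local $L^2\!\to\!L^3$ inverse estimate. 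Your route through the zero-traced $\interad\fat\psi$ and the duality~\eqref{interpoladjiont} is cleaner: since $\discreteDiff_t d^j\in[Y_h]^3$ one has $(\discreteDiff_t d^j,\fat\psi)=(\discreteDiff_t d^j,\interad\fat\psi)_h$ \emph{exactly}, and because $\discreteDiff_t d^j$ vanishes at boundary nodes this equals $(\discreteDiff_t d^j,c)_h$ with $c\in[Y_h]^3$; the right-hand side of~\eqref{discrete_scheme_c} is then estimated directly in the lumped product using nodal H\"older and the $L^3$-stability of $\interad$ (which follows from quasi-uniformity). In particular, your phrase ``lumping discrepancies are of order $h$ and harmless'' is superfluous here---there is no lumping error to control in this variant---whereas in the paper's $\project$-based variant this is precisely the delicate point. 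What the paper's approach buys is that it relies only on the already-established $L^p$-stability of $\project$ (Lemma~\ref{thm:stability_projection}), while yours requires (the routine but not explicitly stated) $L^3$-stability of $\interad$ and of the boundary-node zeroing.
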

\begin{proof}
The discrete energy estimate~\eqref{discrete_energy_equ} follows simply from iterating \eqref{eq:discrete_energy_estimate}.
In order to show~\eqref{unit_norm_restriction}, we adapt the approach in \cite{lasarzik_main}. Therefore, the director equation \eqref{discrete_scheme_c} is tested with $d^{j-1/2}(\hat{z}) \phi_{\hat{z}}$, where ${\hat{z}} \in \mathcal{N}_h$ is a node of the mesh. All terms except for the one including the time-derivative of the director vanish due to the mass lumping and the skew-symmetry.
Then, we are left with
\begin{align*}
    0 = 
    (\discreteDiff_t d^j,d^{j-1/2}({\hat{z}}) \phi_{\hat{z}})_h  
    =&
    \sum_{z \in \mathcal{N}_h} \discreteDiff_t d^{j-1/2}(z) \cdot d^{j-1/2}({\hat{z}}) \phi_{\hat{z}} \int_{\Omega} \phi_z \diff x \\
    =&
     \frac{1}{2k} ( \vert d^{j}({\hat{z}})\vert^2 - \vert d^{j-1}({\hat{z}})\vert^2) \int_{\Omega} \phi_{\hat{z}} \diff x ,
\end{align*}
which yields the result for all interior nodes $\hat{z}$. At the boundary the unit-norm constraint is fulfilled by the assumption on $\fat d_0$, our inhomogeneous Dirichlet boundary condition.
Korn's inequality yields the estimate for the last addend in~\eqref{bounds_temp_variation_inequ}.

Since $\discreteDiff_t d^j \in Y_h$, the definition of the $L^2$-projection yields for a smooth test function $\fat \phi \in  \Cont^\infty_c(\Omega)$ that
\begin{align*}
    (\discreteDiff_t d^j, \fat \phi )
    =
    (\discreteDiff_t d^j, \project (\fat \phi) ).
\end{align*}
Using a duality argument, we are able to estimate the time derivative of the director in a $L^p$-norm by
\begin{align}
\begin{aligned}\label{eq:dtd_decomp}
    \lebnorm{\discreteDiff_t d^j}{3/2} 
    &=
    \sup_{\fat \phi \in \L{3}}
    \left \lvert \frac{(\discreteDiff_t d^j,\project (\fat \phi) )}{\lebnorm{\phi}{3}} \right \rvert
    \\& 
    \leq
    \sup_{\fat \phi \in \L{3}} \abs{  \frac{(\discreteDiff_t d^j,\project (\fat \phi) )_h}{\lebnorm{\phi}{3}} }
    +
    \sup_{\fat \phi \in \L{3}} \abs{  \frac{(\discreteDiff_t d^j,\project (\fat \phi) ) - (\discreteDiff_t d^j,\project (\fat \phi) )_h}{\lebnorm{\phi}{3}} }.
\end{aligned}
\end{align}
The first term can easily be estimated using our discrete director equation~\eqref{discrete_scheme_c} for $c \in Y_h$:
\begin{align}
\begin{aligned}\label{dtdj_estimate_1}
    \abs{  (\discreteDiff_t d^j,  c )_h} 
    \leq& C
     \lebnorm{d^{j-1/2} }{\infty}^2
     \lebnorm{v^j}{6}
     \lebnorm{\projectL (\nabla d^{j-1})}{2}
     \lebnorm{ c}{3}
     \\&
    + C
    \hnorm{d^{j-1/2} \times \Delta_h d^{j-1/2}}
    \lebnorm{d^{j-1/2} }{\infty} \lebnorm{ c}{2}
    \\&
    +
    C
    \lebnorm{d^{j-1/2} }{\infty}^3
    \lebnorm{ \nabla v^j}{2}  \lebnorm{ c}{2}
   \\
   \leq &
   C \left (\lebnorm{ \nabla v^j}{2} +\hnorm{\interpol \left ( d^{j-1/2} \times \Delta_h d^{j-1/2} \right ) }\right )\lebnorm{ c}{3}.
\end{aligned}
\end{align}
Choosing $c = \project(\fat \phi)$ for $\fat \phi \in H^1(\Omega)$ and using the  embedding $\H{1} \hookrightarrow \L{6}$, the stability of the projection, Lemma~\ref{thm:stability_projection}, and the \textit{a priori} estimates of Proposition~\ref{thm_discrete_energy_equality} allow to obtain a bound for the right-hand side. 
The second term in \eqref{eq:dtd_decomp} describes the error introduced by the mass-lumping. It must be estimated locally as in \eqref{proof_lump_convergence}, \textit{i.e.} by additionally applying the inverse estimate one obtains
\begin{align}
\begin{aligned}\label{inequ:ciavaldini}
{\int_\Omega (I- \interpol) ( \discreteDiff_t d^j \cdot \project (\fat \phi) ) \diff x }
&\leq
\sum_{K \in \mathcal{T}_h} {\int_K (I- \interpol) (\discreteDiff_t d^j \cdot \project (\fat \phi) ) \diff x}
\\&
\leq 
C \sum_{K \in \mathcal{T}_h} \norm{ \discreteDiff_t d^j}_{L^2(K)}\norm{ \project (\fat \phi) }_{L^3(K)} \norm{1}_{L^6(K)}
\\&
\leq 
C h^{1/2} \sum_{K \in \mathcal{T}_h} \norm{ \discreteDiff_t d^j}_{L^2(K)}\norm{ \project (\fat \phi) }_{L^3(K)}\,.
\end{aligned}
\end{align}
For the time derivative of the director $\discreteDiff_t d^j$ in the $L^2$-norm, we observe locally by choosing $c=\sum_{\fat z \in \mathcal{N}_h \cap K }\discreteDiff_t d_j(\fat z) \phi_z $ denotes the  in a local version of~\eqref{dtdj_estimate_1} that
\begin{align*}
    \norm{\discreteDiff_t d^j}_{L^2(K)}^2 
    &\leq
    C
   \int_{K}\interpol\left( \vert \discreteDiff_t d^j\vert^2\right)\diff x 
\\&\leq 
   C\Big  (\norm{  v^j}_{L^6(K)}  \norm{\projectL (\nabla d^{j-1})}_{L^2(K)} \norm{ \discreteDiff_t d^j}_{L^3(K)}  \\&\quad+  \left (\int_{K} \interpol \left [d^{j-1/2} \times \Delta_h d^{j-1/2}\right ] ^2\diff x + \norm{\nabla   v^j}_{L^2(K)}\right ) \norm{\discreteDiff_t d^j}_{L^2(K)}\Big )    \,.
\end{align*}
With the inverse estimate we can also bound the time derivative in the $L^3$-norm by
\begin{align*}
    \norm{ \discreteDiff_t d^j}_{L^3(K)}
    \leq
    C h^{-1/2}
    \norm{ \discreteDiff_t d^j}_{L^2(K)}.
\end{align*}
Putting the above together and reinserting it into \eqref{inequ:ciavaldini} yields
\begin{align*}
\int_\Omega& (I- \interpol) ( \discreteDiff_t d^j \cdot \project (\fat \phi) ) \diff x 
\\& \leq
 C \norm{  v^j}_{L^6(\Omega)}  \norm{\projectL (\nabla d^{j-1})}_{L^2(\Omega)} \norm{ \project (\fat \phi) }_{L^3(\Omega)}\\
 &\quad + C    \left (\lebnorm{ \nabla v^j}{2} +\hnorm{d^{j-1/2} \times \Delta_h d^{j-1/2}}\right ) \norm{ \project (\fat \phi) }_{L^2(\Omega)}\,.
\end{align*}
Multiplying by $k$ and summing up leads to the estimate of $\discreteDiff_t d^j$, due to the \textit{a priori} estimates \eqref{discrete_energy_equ}.
\\
For the velocity, we proceed as in \cite{lasarzik-weak_solutions}: We test the time derivative of the velocity with a smooth solenoidal test function $\fat \phi$,  and apply the projection~$ \projectV$ such that $\langle \discreteDiff_t v^j, \fat \phi \rangle = ( \discreteDiff_t v^j,\projectV \fat \phi )$  and equation~\eqref{discrete_scheme_a}, 
\begin{align*}
    \abs{\langle \discreteDiff_t v^j, \fat \phi \rangle}
    &=
    \vert 
    ((v^{j-1} \cdot \nabla) v^j,\projectV (\fat \phi)) + \frac{1}{2} ([\nabla \cdot v^{j-1}]v^j,\projectV (\fat \phi)) \\&\quad 
        - ([\projectL{\nabla d^{j-1}}]^T [d^{j-1/2}\times (d^{j-1/2} \times \Delta_h d^{j-1/2} )],\projectV (\fat \phi))_h 
        +(T_L^j,\nabla \projectV \fat \phi) 
    \vert .
\end{align*}
The convective terms can be estimated by a constant using \eqref{discrete_energy_equ} and
\begin{multline*}
    \abs{((v^{j-1} \cdot \nabla) v^j,\projectV (\fat \phi)) + \frac{1}{2} ([\nabla \cdot v^{j-1}]v^j,\projectV (\fat \phi))}
   \\ \leq
    (\lebnorm{ v^{j-1}}{2} \lebnorm{\nabla v^{j}}{2}+
    \lebnorm{ v^{j}}{2} \lebnorm{\nabla v^{j-1}}{2})
    \lebnorm{\projectV (\fat \phi)}{\infty}.
\end{multline*}
For the Leslie stress tensor we infer the estimate 
\begin{align*}
    \abs{(T_L^j:\nabla \projectV \fat \phi)}
    \leq&
    \abs{(T_D^j:\nabla \projectV \fat \phi)}
    + \vert (d^{j-1/2}, (\nabla \projectV \fat \phi)_{skw} \Delta_h d^{j-1/2} )_h\vert 
    \\& \quad\quad
    + \lambda \vert (d^{j-1/2} \times [(\nabla \projectV \fat \phi)_{sym} d^{j-1/2}], d^{j-1/2} \times \Delta_h d^{j-1/2})_h\vert 
    \\
    \leq&
    C\Big  [\lebnorm{(\nabla v^{j})_{sym}}{2}\lebnorm{(\nabla \projectV \fat \phi)_{sym}}{2}
    \\&\quad\quad
    +
    \lebnorm{\mathcal I_h \left(d^{j-1/2} \times \Delta_h d^{j-1/2}\right) }{2}\lebnorm{\nabla \projectV \fat \phi}{2}
    \Big ].
\end{align*}
Note that we hereby reformulated the term including the skew-symmetric part of the velocity's gradient on the algebraic level, \textit{i.e.}
\begin{align*}
    (\Delta_h d^{j-1/2}, (\nabla \projectV \fat \phi)_{skw}  d^{j-1/2} )_h
    =
    (d^{j-1/2}\times \Delta_h d^{j-1/2},d^{j-1/2}\times[ (\nabla \projectV \fat \phi)_{skw}  ] d^{j-1/2})_h,
\end{align*}
since $[d ]_x ^T [d]_x (A)_{\skw} d = ( I- d \otimes d) (A)_{skw} d 
= (A)_{skw} d$ holds.
The Ericksen stress tensor can be estimated in a similar fashion
\begin{align*}
   & \abs{([\projectL{\nabla d^{j-1}}]^T  [d^{j-1/2}\times (d^{j-1/2} \times \Delta_h d^{j-1/2} )],\projectV \fat \phi)_h }
    \quad\quad\quad\quad\quad
    \\
    &\leq C 
    \lebnorm{\projectL{\nabla d^{j-1}}}{2}
    \lebnorm{\mathcal I_h \left( d^{j-1/2} \times \Delta_h d^{j-1/2}\right) }{2}
    \lebnorm{\projectV (\fat \phi)}{\infty}.
\end{align*}
Summing over $j$, using the embedding $H^{2} (\Omega) \hookrightarrow L^\infty (\Omega)$ and the stability of the projection onto $V_h$ due to~\eqref{projection_approximation_vh} yields the assertion.
\end{proof}

\begin{proposition}
Let $u^j = (v^j,d^j_{\circ})\in U_h$ be a solution to scheme~\eqref{discrete_scheme}. Then the discrete energy-variational inequality
\begin{align}
\begin{split}\label{discrete_envar}
\discreteDiff_t E^j &
+  \frac{k}{2}\LTwoNorm{\discreteDiff_t v^j}^2 
\\&+ \left ( T_D^j ;( \nabla v^j)_{\sym} - ( \nabla \projectV\vv )_{\sym} \right )  +  \hnorm{  d^{j-1/2} \times \Delta_h d^{j-1/2}}^2 
 \\&
- (\discreteDiff_t v^j,\projectV\vv) - \left ( (v ^{j-1}\cdot \nabla)  v ^j , \projectV\vv \right ) - \frac{1}{2} \left ( ( \di  v ^{j-1} )v^j , \projectV\vv\right )\\ &- \left ( [\projectL{\nabla d^{j-1}}]^T [d^{j-1/2}\times (d^{j-1/2} \times \Delta_h d^{j-1/2} )], \projectV\vv \right ) _h 
\\
        &-\lambda (d^{j-1/2} \times [(\nabla \projectV\vv )_{sym}d^{j-1/2}],d^{j-1/2} \times \Delta_h d^{j-1/2})_h  
   \\& +( (\nabla \projectV\vv)_{skw} \Delta_h d^{j-1/2}, d^{j-1/2})_h
  \\&+ \mathcal{K}(\projectV\vv) \left ( \frac{1}{2}\Vert v^{j-1}\Vert_{L^2(\Omega)}^2 + \frac{1}{2}\Vert \nabla d^{j-1}\Vert_{L^2(\Omega)}^2  -E^{j-1} \right ) = 0
\end{split}
\end{align}
with the energy $E^j$ and the  regularity weight $\mathcal{K}$   given by 
\begin{align*}
 E^j := \frac{1}{2}\LTwoNorm{v^j}^2 + \frac{1}{2}\LTwoNorm{
 \nabla d^j}^2 \qquad \mathcal{K}(\vv ) : ={}& \frac{1}{2}
\Vert  \vv \Vert_{L^\infty (\Omega)} ^2 \,.
\end{align*}
holds for all $\vv\in H^1(0,T;(\V)^*) \cap L^2 (0,T;\V) \cap \Cont(\ov\Omega \times [0,T])$. 
\end{proposition}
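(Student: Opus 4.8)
The plan is to derive~\eqref{discrete_envar} by testing the two equations of Scheme~\eqref{discrete_scheme} with suitable functions and adding the resulting identities, in exact parallel with the formal energy balance. The extra term involving~$\mathcal{K}$ will turn out to vanish identically on the discrete level and is carried along only so that each step has the same algebraic shape as the continuous inequality~\eqref{envarform}, which is what matters when summing over~$j$ and passing to the limit.

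Concretely, I would first test the discrete momentum equation~\eqref{discrete_scheme_a} with $a=v^j-\projectV\vv\in V_h$ and the discrete director equation~\eqref{discrete_scheme_c} with $c=-\Delta_h d^{j-1/2}\in[Y_h]^3$; both are admissible, since $\projectV$ maps into $V_h$, $\Delta_h$ maps $H^1(\Omega)$ into $[Y_h]^3$, and $d^{j-1/2}\in[Z_h]^3\subset H^1(\Omega)$. Adding the two identities, the contributions that involve only the discrete solution should reassemble into the left-hand side of the energy equality~\eqref{discrete_energy_equ}: the discrete product rules~\eqref{discrete_identities} turn $(\discreteDiff_t v^j,v^j)$ and $(-\Delta_h d^{j-1/2},\discreteDiff_t d^j)_h$ into $\frac12\discreteDiff_t\LTwoNorm{v^j}^2+\frac{k}{2}\LTwoNorm{\discreteDiff_t v^j}^2$ and $\frac12\discreteDiff_t\LTwoNorm{\nabla d^j}^2$, here using that $\discreteDiff_t d^j\in[Y_h]^3$ (the inhomogeneous boundary part $\interpol\fat d_0$ cancels), so that~\eqref{discreteLaplace} applies; the convective terms weighted against $v^j$ cancel by the standard skew-symmetrisation, i.e.\ an integration by parts in the $\frac12([\di v^{j-1}]v^j,\,\cdot\,)$ term; and the diagonal coupling terms — the Ericksen stress $[\projectL\nabla d^{j-1}]^T[d^{j-1/2}\times(d^{j-1/2}\times\Delta_h d^{j-1/2})]$ tested against $v^j$ in~\eqref{discrete_scheme_a}, the transport term $d^{j-1/2}\times[\projectL\nabla d^{j-1}v^j]$ tested against $-\Delta_h d^{j-1/2}$ in~\eqref{discrete_scheme_c}, together with the two $\lambda$-terms and the two $(\nabla v^j)_{\skw}$-terms — cancel pairwise between the two equations. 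What remains from this diagonal part is precisely $\discreteDiff_t E^j+\frac{k}{2}\LTwoNorm{\discreteDiff_t v^j}^2+(T_D^j,\nabla v^j)+\hnorm{d^{j-1/2}\times\Delta_h d^{j-1/2}}^2$ (note that $T_D^j$ sees only the symmetric part of the gradient, so $(T_D^j,\nabla v^j)=(T_D^j;(\nabla v^j)_{\sym})$), while the terms weighted against $\projectV\vv$ survive and, after rewriting the cross-product expressions coming from the director equation in the form in which they occur in the momentum equation, collect into the remaining $\projectV\vv$-terms of~\eqref{discrete_envar}. Since by definition $E^{j-1}=\frac12\LTwoNorm{v^{j-1}}^2+\frac12\LTwoNorm{\nabla d^{j-1}}^2$, the quantity $\mathcal{K}(\projectV\vv)\bigl(\frac12\LTwoNorm{v^{j-1}}^2+\frac12\LTwoNorm{\nabla d^{j-1}}^2-E^{j-1}\bigr)$ is zero and can be inserted freely; this completes the derivation.

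The technical core — and the step that demands the most care — is the collection of pairwise cancellations among the coupling terms, which is exactly what makes the mass-lumped scheme structure-inheriting. They rely on three ingredients, applied nodewise: the Lagrange identity $(\fat a\times\fat b)\cdot(\fat a\times\fat c)=(\fat a\cdot\fat a)(\fat b\cdot\fat c)-(\fat a\cdot\fat b)(\fat a\cdot\fat c)$, equivalently $\fat a\times(\fat a\times\fat c)=-[\fat a]_{\times}^T[\fat a]_{\times}\fat c$, used with $\fat a=d^{j-1/2}(z)$; the transpose relation $\bigl([\projectL\nabla d^{j-1}]^T\fat w\bigr)\cdot\fat p=\fat w\cdot\bigl([\projectL\nabla d^{j-1}]\fat p\bigr)$, which is compatible with the lumped inner product $(\cdot,\cdot)_h$ because $\projectL$, $\Delta_h$, and the interpolation underlying $(\cdot,\cdot)_h$ all act nodewise (see~\eqref{discreteLaplace},~\eqref{newproject},~\eqref{def:lumped_norm_product}); and the skew-symmetry of $(\nabla v^j)_{\skw}$ and $(\nabla\projectV\vv)_{\skw}$, combined with the fact that $[\fat a]_{\times}^T[\fat a]_{\times}$ acts by multiplication with $|\fat a|^2$ on vectors orthogonal to $\fat a$, in particular on $(\nabla v^j)_{\skw}\fat a$. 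With these, the diagonal coupling terms annihilate one another just as in the formal energy computation, even though $\nabla d^{j-1}$ is only piecewise constant and the director block is mass-lumped; the remaining work — pushing the cross products and transposes through the lumped products while keeping the signs straight — is routine bookkeeping, after which collecting terms gives~\eqref{discrete_envar}.
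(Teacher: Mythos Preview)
Your approach is correct and essentially the same as the paper's. The paper phrases it as adding the one-step discrete energy identity (the identity underlying~\eqref{discrete_energy_equ}, i.e.\ \eqref{eq:discrete_energy_estimate}) to the momentum equation~\eqref{discrete_scheme_a} tested with $-\projectV\vv$, and then appending the trivially-zero $\mathcal{K}$ term; your choice $a=v^j-\projectV\vv$, $c=-\Delta_h d^{j-1/2}$ unpacks exactly this, with the diagonal cancellations you spell out simply reproducing the derivation of~\eqref{eq:discrete_energy_estimate}.
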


\begin{proof}

Adding~\eqref{discrete_scheme_a} tested with $- \projectV \vv^j$ to~\eqref{discrete_energy_equ}, we find the variational-energy inequality
\begin{align*}
\begin{split}
\frac{1}{2k}\Vert v^j & \Vert_{L^2(\Omega)}^2
 + \frac{1}{2k}\Vert \nabla d^j\Vert_{L^2(\Omega)}^2  
+  \frac{k}{2}\LTwoNorm{\discreteDiff_t v^j}^2 
\\&
+ \left ( T_D^j ; ( \nabla v^j)_{\sym} - ( \nabla \projectV\vv )_{\sym} \right ) 
 +  \hnorm{  d^{j-1/2} \times \Delta_h d^{j-1/2}}^2
 \\&  - (\discreteDiff_t v^j,\projectV\vv) - \left ( (v ^{j-1}\cdot \nabla)  v ^j ,\projectV \vv \right ) - \frac{1}{2} \left ( ( \di  v ^{j-1} )v^j ,\projectV \vv\right )\\ &- \left ( [\projectL{\nabla d^{j-1}}]^T [d^{j-1/2}\times (d^{j-1/2} \times \Delta_h d^{j-1/2} )], \projectV\vv \right ) _h 
        \\
        &-\lambda (d^{j-1/2} \times [(\nabla \projectV\vv )_{sym}d^{j-1/2}],d^{j-1/2} \times \Delta_h d^{j-1/2})_h  
 \\&   +( (\nabla \projectV\vv)_{skw} \Delta_h d^{j-1/2}, d^{j-1/2})_h
    =
    \frac{1}{2k}  \LTwoNorm{v^{j-1}}^2
    + \frac{1}{2k} \LTwoNorm{\nabla d^{j-1}}^2 .
\end{split}
\end{align*}
By defining the variable $E^j$ and adding  $ \mathcal{K}(\projectV\vv) \left ( \frac{1}{2}\Vert v^{j-1}\Vert_{L^2(\Omega)}^2 + \frac{1}{2}\Vert \nabla d^{j-1}\Vert_{L^2(\Omega)}^2  -E^{j-1} \right )= 0 $ implies
\eqref{discrete_envar}. 
\end{proof}

\subsection{Converging subsequences}\label{sec:extract}
Let $\{u^j\}_{j=0,...,J}$ be a sequence of measurable functions in space. We define their constant and linear interpolates in time as follows,
\begin{align*}
    \overline{u}^k_h(t) &\coloneqq u^j, &
    \quad \underline{\overline{u}}^k_h (t)&\coloneqq \frac{1}{2} (\overline{u}^k_h (t) + \underline{u}^k_h (t) ), 
    \\
       \underline{u}^k_h(t) &\coloneqq u^{j-1}, &
    \quad {u}^k_h(t)& \coloneqq \frac{u^j-u^{j-1}}{k}(t-jk) + u^j,
\end{align*}
for $(j-1)k < t \leq jk$. This yields the standard relation between the continuous and discrete time derivative, $\partial_t u^k_h (t) = \frac{u^j-u^{j-1}}{k} = \discreteDiff_t u^j$.
Analogously we define the interpolates for a smooth test function $\Tilde{u} \in C([0,T]; \mathbb{Y})$ for $(j-1)k < t \leq jk$
 by,
\begin{align*}
    \overline{\Tilde{u}}^k(t) \coloneqq \Tilde{u} (jk),
    \,\,\,  \underline{\Tilde{u}}^k(t) \coloneqq \Tilde{u}((j-1)k), 
    \,\,\, \hat{\Tilde{u}}^k(t) \coloneqq \frac{\Tilde{u}(jk)-\Tilde{u}((j-1)k)}{k}(t-jk) + \Tilde{u}(jk)\,.
\end{align*}

The \textit{a priori} estimates \labelcref{discrete_energy_equ} allow us to extract converging subsequences  which we do not relabel and $(\fat v, \fat d ,E)$ fulfilling~\eqref{reg} such that  
\begin{align}
\begin{split}\label{converging_subsequences}
\overline{v}^k_h,\underline{v}^k_h,v_h^k &\weakstarto \fat v \text{ in } L^\infty (0,T;\Ha), \\
\overline{v}^k_h,\underline{v}^k_h,v_h^k &\weakto \fat v \text{ in }  L^2(0,T;H^1_0(\Omega)), \\
\partial_t v_k^h &\weakto \partial_t \fat v \text{ in } L^2(0,T; (\velspace )^*) 
,\\
\overline{d}^k_h,\underline{d}^k_h,d_h^k 
&\weakstarto
\fat d \text{ in } L^\infty (0,T;L^\infty(\Omega)) \cap L^\infty (0,T;H^1(\Omega)) 
,\\
\partial_t d_k^h &\weakto \partial_t \fat d \text{ in } L^2(0,T;L^{3/2}(\Omega)),
\end{split}\\[2ex]
\begin{split}\label{converging_subsequences_dissipation}
\overline{d}^k_h \cdot (\nabla \overline{v}^k_h)_{sym} \overline{d}^k_h
&\weakto \fat d \cdot (\nabla \fat v)_{sym} \fat d \text{ in } L^2(0,T;\L{2})
,\\
(\nabla \overline{v}^k_h)_{sym} \overline{d}^k_h
&\weakto (\nabla \fat v)_{sym}\fat  d \text{ in } L^2(0,T;\L{2})
,\\
\mathcal{I}_h(\underline{\overline{d}}^k_h \times \Delta_h \overline{\underline{d}}_k^h)
&\weakto\fat  d \times \Delta \fat d \text{ in } L^2(0,T;\L{2}),
\\
\underline{E}^k_h(t) &\to E(t) \text{ for all } t\in[0,T]\,
,
\end{split}
\end{align}
where the last  convergence follows from Helly's selection theorem (\textit{cf.}~\cite[Ex.~8.3]{brezis}).
We can refine the convergence for the director by using the following compact embeddings, referred to as Aubin--Lions--Simon Lemma \cite{simons}, 
\begin{align}
\begin{aligned}\label{d_lions_aubin}
    \{ \fat d \in L^2(0,T;\H{1}) : \partial_t \fat d \in L^{2}(0,T;\L{3/2}) \}
    &\stackrel{c}{\hookrightarrow} L^2(0,T;\L{2}),
    \\
     \{ \fat d \in L^\infty (0,T;\H{1}) : \partial_t \fat d \in L^{2}(0,T;\L{3/2}) \}
    &\stackrel{c}{\hookrightarrow} C^0(0,T;\L{2}),
\end{aligned}
\end{align}
which yield strong convergence for the director
\begin{align*}
    d_k^h 
    \to\fat  d \text{ in } L^2 (0,T;L^2(\Omega)) .
\end{align*}
A standard interpolation inequality (\textit{cf.} \cite[p. 192 ff.]{bennett_sharpley}) and the uniform bound in $ L^\infty (0,T;L^\infty(\Omega))$ yields strong convergence,
\begin{equation}\label{director_strong_convergence}
    d_k^h 
    \to \fat d \text{ in } L^p (0,T;L^p(\Omega)) \text{ for all }p\in [1,\infty).
\end{equation}
For the term $\fat d \times \Delta \fat d$, note that due to the above convergences and \eqref{lump_estim_2}, we have
\begin{align*}
    \int_0^T& (\mathcal{I}_h(\underline{\overline{d}}^k_h \times \Delta_h \overline{\underline{d}}_k^h), \phi) \diff t\\
    &=
    \int_0^T (\mathcal{I}_h(\underline{\overline{d}}^k_h \times \Delta_h \overline{\underline{d}}_k^h), \phi) - (\underline{\overline{d}}^k_h \times \Delta_h \overline{\underline{d}}_k^h, \phi)_h \diff t
    +\int_0^T (\underline{\overline{d}}^k_h \times \Delta_h \overline{\underline{d}}_k^h, \phi)_h \diff t
    \\
    &\leq
    C h \norm{\mathcal{I}_h(\underline{\overline{d}}^k_h \times \Delta_h \overline{\underline{d}}_k^h)}_{L^2(0,T;L^2(\Omega))} \norm{\nabla \phi }_{L^2(0,T;H^1(\Omega))}
    -
    \int_0^T ( \nabla \overline{\underline{d}}_k^h,  \nabla \phi \times \underline{\overline{d}}^k_h ) \diff t
\end{align*}
for $\phi \in C_c^\infty ([0,T]\times \Omega; \mathbb{R}^3)$. Now note that the first term vanishes as $h \to 0$ and the last term converges to
\begin{align*}
    -\int_0^T ( \nabla \fat d, \nabla \phi \times \fat d) \diff t
    =
    -\int_0^T (\fat d \times \nabla \fat d, \nabla \phi ) \diff t
    = 
    \int_0^T (\fat d \times \Delta \fat d,  \phi ) \diff t
\end{align*}
where the last equality, $\nabla \cdot (\fat d \times \nabla \fat d) = \fat d\times \Delta \fat d$ , has to be understood in the distributional sense.
The fact that all interpolates in \eqref{converging_subsequences} converge against the same limit can be confirmed as in \cite{Prohl_Schmuck_2010}. Consider examplary for the velocity,
\begin{align*}
\begin{aligned}
    \norm{v^k_h - \underline{v}^k_h}_{L^2(0,T;\L{2})}^2
    &\leq
    \sum_{j=1}^J
    \lebnorm{v^j-v^{j-1}}{2}^2 
    \int_{j(k-1)}^{jk} \left(\frac{t-jk}{k} \right)^2 \diff t
\\&    \leq k
     \sum_{j=1}^J
    \frac{k^2}{3}
    \lebnorm{\discreteDiff_t v^j}{2}^2 ,
\end{aligned}
\end{align*}
which vanishes for $k \to 0$, since the term on the right-hand side is bounded by our energy estimate~\eqref{discrete_energy_equ}.
In order to deduce that the    velocity is solenoidal, we  use standard arguments (e.g. as in \cite{crouzeix_raviart}). 
We test the divergence of the velocity with a test function $\fat \phi \in C_c^\infty(\Omega \times (0,T])$ with $\int_\Omega \fat \phi \diff x =0$ and use our discrete divergence-zero condition to obtain
\begin{align*}
    \int_0^T ( \nabla \cdot \fat v(s) , \fat \phi) \diff s
    =
    \int_0^T 
    (\nabla \cdot (\fat v(s) -v^h_k(s)) , \fat \phi)
    +
     ( \nabla \cdot v^h_k(s), \fat \phi-\Pi_h \fat \phi)
     \diff s,
\end{align*}
where $\Pi_h$ is the $L^2$-projection onto $M_h$.
The first term vanishes due to the weak convergence of the velocity in \eqref{converging_subsequences}. The second summand vanishes for $h\to 0$ simply due to the uniform boundedness $v^h_k$ and a standard approximation property of the space $M_h$ (see e.g. \cite[Lemma 12.4.3]{Scott_Brenner_FEM}), by
\begin{align*}
    \int_0^T( \nabla \cdot v^h_k, \fat \phi-\Pi_h (\fat  \phi))\diff s
    &\leq
    \norm{v^h_k}_{L^2 (0,T; H^1_0 (\Omega))} \norm{\fat \phi-\Pi_h\fat \phi}_{L^2(0,T; L^2 (\Omega))}
    \\& 
    \leq
  C h^2 \norm{\nabla^2 \fat \phi}_{L^2(0,T; L^2 (\Omega))}.
\end{align*}
The convergence for our non-homogeneous Dirichlet boundary conditions of the director, follows from the trace theorem (\textit{cf.} \cite[Thm. 6.3.10]{atkinson_han}) and the interpolation error estimate \eqref{thm:interpolation_error},
\begin{align}
\begin{aligned}\label{boundary_interpolation_convergence}
    \norm{\tr(\fat d_0)- \tr (d^h_k (t))}_{L^2 (\partial \Omega)}
    =&
     \norm{\tr (\fat d_0)  - \tr (\interpol \fat d_0 ) -\tr(d^h_{k,\circ} (t) )}_{L^2 (\partial \Omega)}
     \\
     =&
    \norm{\tr ((I-\interpol) \fat d_0)}_{L^2 (\partial \Omega)} 
     \\
     \leq&
     C
     \norm{ (I-\interpol) \fat d_0}_{H^1 ( \Omega)} 
     \leq
     C h \Hseminorm{\fat d_0}{2}\, ,
\end{aligned}
\end{align}
for every $t\in [0,T]$.
Analogously, we observe for the initial condition of the director
\begin{align}\label{strong_convergence_IC}
\Hsobnorm{d^h_k (0) -\fat d_0}{1}
=
  \Hsobnorm{\interpol (\fat d_0) -\fat d_0}{1}
  \leq C h \Hseminorm{\fat d_0}{2}.
\end{align}
For the initial condition of the velocity, we can derive an analogous estimate -- however in the $L^2$ norm instead -- by making use of \eqref{projection_approximation_vh}, since we approximated the initial condition with the respective projection operator, $v^0 = \projectV (v_0)$.
\subsection{Unit-norm restriction and convergence for the director equations}\label{sec:dir}
From~\eqref{unit_norm_restriction}, we can infer that the unit-norm restriction $\vert d(t,x) \vert =1$ holds at every node $z \in \mathcal{N}_h$. In the limit as $h \to 0$, this restriction will be fulfilled almost everywhere on $\Omega$, since $\interpol ( \vert d^j\vert^2 -1) = 0 $ holds. By subsequently applying the interpolation error bound (\textit{cf.}Lemma~\ref{thm:interpolation_error}) and the inverse estimate~(\textit{cf.} Lemma~\ref{lem:inverse_estimate}), one obtains
\begin{align*}
    \lebnorm{\vert d^j\vert^2 -1 }{2}
    &=
    \lebnorm{(I- \interpol)(\vert d^j\vert^2 -1 ) }{2}
    \\&=
    \sqrt{\sum_{K \in \mathcal{T}_h} \norm{(I- \interpol) ( \vert d^j\vert^2 -1) }_{L^2 (K)}^2}
    \\&
    \leq Ch^2
     \sqrt{\sum_{K \in \mathcal{T}_h} \norm{\nabla^2 \vert d^j\vert^2  }_{L^2 (K)}^2}
    \\&
    \leq Ch
    \lebnorm{\nabla \vert d^j\vert^2 }{2}
    \\&
    \leq Ch
    \lebnorm{\nabla d^j }{2} \lebnorm{d^j}{\infty},
\end{align*}
where the right-hand side is bounded uniformly due to our \textit{a priori} energy estimate \eqref{discrete_energy_equ}. This gives us the  result of Proposition~\ref{prop:unitconv}. 
\begin{proposition}\label{prop:unitconv}
Let $d^k_h$ be the approximate solution in the sense of the discrete scheme \eqref{discrete_scheme} that converges to a limit as in \eqref{converging_subsequences}. Then the norm of $d^k_h$ converges to $1$ in a linear order, \textit{i.e.}
\begin{align*}
    \lebnorm{\vert d^k_h (t)\vert^2 -1 }{2}
    \in  \mathcal{O} (h) .
\end{align*}
\end{proposition}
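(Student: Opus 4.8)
The plan is to reduce everything to one observation: the piecewise affine nodal interpolant $\interpol$ is determined solely by the nodal values, and by~\eqref{unit_norm_restriction} we have $|d^j(z)|^2-1=0$ at every node $z\in\mathcal N_h$. Hence $\interpol\bigl(|d^j|^2-1\bigr)\equiv 0$, so that
\begin{align*}
  |d^j|^2-1=(I-\interpol)\bigl(|d^j|^2-1\bigr)\qquad\text{in }\Omega,
\end{align*}
and the claim becomes a purely local estimate of the nodal interpolation error of the piecewise quadratic function $|d^j|^2-1$.

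First I would apply the interpolation error bound of Lemma~\ref{thm:interpolation_error} (with $k=0$, $p=2$) on each $K\in\mathcal T^h$ and sum, which gives $\lebnorm{|d^j|^2-1}{2}\le Ch^{2}\bigl(\sum_{K}\norm{\nabla^{2}(|d^j|^2)}_{L^{2}(K)}^{2}\bigr)^{1/2}$, the Hessian being taken elementwise. Since $d^j\in[Z_h]^3$ is affine on each $K$ we have $\nabla^{2}d^j\equiv 0$ there, hence $\nabla^{2}(|d^j|^2)=2\,\nabla d^j:\nabla d^j$ is the (piecewise constant) gradient of the piecewise linear field $\nabla(|d^j|^2)$; applying the inverse estimate (Lemma~\ref{lem:inverse_estimate}) to this gradient trades one power of $h$ against a derivative and yields $\lebnorm{|d^j|^2-1}{2}\le Ch\,\lebnorm{\nabla(|d^j|^2)}{2}$. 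Using the product rule $\nabla(|d^j|^2)=2(\nabla d^j)^{T}d^j$ I then bound the right-hand side by $Ch\,\lebnorm{\nabla d^j}{2}\,\lebnorm{d^j}{\infty}$.

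It remains to bound these two factors uniformly in $h$, $k$ and $j$. The discrete energy law~\eqref{discrete_energy_equ} gives $\lebnorm{\nabla d^j}{2}^{2}\le 2\,\mathcal{E}(v^{0},d^{0})\le C\bigl(\mathcal{E}(v_0,d_0)+1\bigr)$ once the stability of $\projectV$ and $\interpol$ on the initial data is used. For the $L^{\infty}$-bound I would observe that on each element $d^j$ is an affine, hence convex, combination of its vertex values, each of Euclidean norm $1$ by~\eqref{unit_norm_restriction}, so convexity of $|\cdot|$ forces $\lebnorm{d^j}{\infty}\le 1$. This gives $\lebnorm{|d^j|^2-1}{2}\le Ch$ with $C$ independent of $h$, $k$, $j$; the same chain applied to the time interpolates of~\eqref{converging_subsequences} (which at every $t$ are affine combinations of two consecutive iterates, hence still obey the $H^1$- and $L^{\infty}$-bounds), together with the bounds on $\discreteDiff_t d^j$ from~\eqref{bounds_temp_variation_inequ}, transfers the estimate to $d^k_h(t)$ and proves $\lebnorm{|d^k_h(t)|^2-1}{2}\in\mathcal O(h)$. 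There is no genuinely hard step; the one point requiring care is arranging the interpolation and inverse estimates so that the final bound lands on $\lebnorm{\nabla d^j}{2}$, controlled by the discrete energy estimate, rather than on a second-order norm of $d^j$, which it does not control.
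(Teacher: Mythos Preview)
Your argument is correct and follows essentially the same route as the paper: write $|d^j|^2-1=(I-\interpol)(|d^j|^2-1)$ using the nodal constraint~\eqref{unit_norm_restriction}, apply the local interpolation error bound and then the inverse estimate to land on $Ch\,\lebnorm{\nabla d^j}{2}\,\lebnorm{d^j}{\infty}$, and invoke the discrete energy law~\eqref{discrete_energy_equ}. Your convexity argument for $\lebnorm{d^j}{\infty}\le 1$ is a nice explicit justification of a bound the paper takes for granted; note that for the piecewise linear time interpolant $d^k_h(t)$ the nodal values need not have unit norm, so the ``same chain'' does not apply verbatim there---but the paper only proves the estimate for the iterates $d^j$ as well, and that is what is actually used downstream.
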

In order to prove the convergence of the approximate solution to~\eqref{discrete_scheme_c} to a solution of~\eqref{weak:d}, we consider equation \eqref{discrete_scheme_c} tested with the projection $\project \fat c$ of a smooth test function $\fat c\in C^\infty_c (\Omega \times (0,T)) $ and observe, for instance by Lemma~\ref{cor:lumping_error_yh_yh} that
\begin{align*}
    \int_0^T  (\discreteDiff_t d^k_h,\project{\fat c})_h  -  (\discreteDiff_t d^k_h,\project{\fat c}) \diff t& \leq 
Ch
\norm{\discreteDiff_t d^k_h}_{L^2(0,T;L^2(\Omega))}
\norm{ \project \fat c}_{L^2(0,T;H^1(\Omega))}
\\
& \leq 
Ch^{1/2}
\norm{\discreteDiff_t d^k_h}_{L^2(0,T;L^{3/2}(\Omega))}
\norm{ \project \fat  c}_{L^2(0,T;H^1(\Omega))}
.
\end{align*}
This term vanishes as $h\ra 0$ since the last factor is bounded for smooth $\fat c$ due to~\ref{thm:stability_projection}. 
For the convection term, we infer due to the definition~\ref{newproject}
\begin{align*}
&\left (\overline{\underline{d}}^k_h\times [\projectL{\nabla \underline{d}^k_h} \overline{v}^k_h] , \overline{\underline{d}}^k_h \times \project{\fat c}\right )_h
\\&=-  \left ( \projectL{\nabla \underline{d}^k_h}  , \interpol \left [\overline{\underline{d}}^k_h\times \left (\overline{\underline{d}}^k_h \times \project{\fat c}\right ) \otimes\overline{v}^k_h\right ]  \right )_h
\\&=-  \left ( {\nabla \underline{d}^k_h}  ,  \left [\overline{\underline{d}}^k_h\times \left (\overline{\underline{d}}^k_h \times \project{\fat c}\right ) \otimes\overline{v}^k_h\right ]  \right )-  \left ( {\nabla \underline{d}^k_h}  , (\interpol -I)\left [\overline{\underline{d}}^k_h\times \left (\overline{\underline{d}}^k_h \times \project{\fat c}\right ) \otimes\overline{v}^k_h\right ]  \right )
\,.
\end{align*}
For the last term, we infer
\begin{align*}
  &  \left\vert \left ( {\nabla \underline{d}^k_h}  , (\interpol -I)\left [\overline{\underline{d}}^k_h\times \left (\overline{\underline{d}}^k_h \times \project{\fat c}\right ) \otimes\overline{v}^k_h\right ]  \right ) \right\vert 
    \\
  &  \leq{} \LTwoNorm{ {\nabla \underline{d}^k_h} } \LTwoNorm{ (\interpol -I)\left [\overline{\underline{d}}^k_h\times \left (\overline{\underline{d}}^k_h \times \project{\fat c}\right ) \otimes\overline{v}^k_h\right ]  }
   \end{align*}
   such that we find 
 \begin{align*}
&\LTwoNorm{ (\interpol -I)\left [\overline{\underline{d}}^k_h\times \left (\overline{\underline{d}}^k_h \times \project{\fat c}\right ) \otimes\overline{v}^k_h\right ]  }^2\\
  &\leq{}  C   h^4\sum_{K\in \mathcal{T}_h} \norm{\nabla^2\left [\overline{\underline{d}}^k_h\times \left (\overline{\underline{d}}^k_h \times \project{\fat c}\right ) \otimes\overline{v}^k_h\right ]}_{L^2(K)}^2 
  \\
  &\leq{}   C   h^4 \sum_{K\in \mathcal{T}_h} \left(\norm{\nabla\overline{\underline{d}}^k_h\times \left (\nabla \overline{\underline{d}}^k_h \times \project{\fat c}\right ) \otimes\overline{v}^k_h+\nabla\overline{\underline{d}}^k_h\times \left ( \overline{\underline{d}}^k_h \times \nabla\project{\fat c}\right ) \otimes\overline{v}^k_h}_{L^2(K)} \right)^2
  \\
  &\quad+ C   h^4 \sum_{K\in \mathcal{T}_h} \left(\norm{\nabla\overline{\underline{d}}^k_h\times \left ( \overline{\underline{d}}^k_h \times \project{\fat c}\right ) \otimes\nabla\overline{v}^k_h+\overline{\underline{d}}^k_h\times \left ( \nabla\overline{\underline{d}}^k_h \times \nabla\project{\fat c}\right ) \otimes\overline{v}^k_h}_{L^2(K)} \right)^2
   \\
  &\quad+ C   h^4 \sum_{K\in \mathcal{T}_h} \left(\norm{\overline{\underline{d}}^k_h\times \left ( \nabla\overline{\underline{d}}^k_h \times \project{\fat c}\right ) \otimes\nabla\overline{v}^k_h+\overline{\underline{d}}^k_h\times \left ( \overline{\underline{d}}^k_h \times \nabla\project{\fat c}\right ) \otimes\nabla\overline{v}^k_h}_{L^2(K)} \right)^2
 \\
  &\quad+ C   h^4 \sum_{K\in \mathcal{T}_h}\norm{\overline{\underline{d}}^k_h\times \left ( \overline{\underline{d}}^k_h \times \project{\fat c}\right ) \otimes\nabla^2\overline{v}^k_h}_{L^2(K)}^2
\\
&\leq C \left[h\LTwoNorm{ \nabla \overline{\underline{d}}^k_h }^2 \lebnorm{\overline{v}^k_h}{6}^2{+}h^2\lebnorm{  \overline{\underline{d}}^k_h }{\infty}^2 \lebnorm{\nabla\overline{v}^k_h}{2}^2\right] \lebnorm{\overline{\underline{d}}^k_h}{\infty}^2 \lebnorm{\project \fat c }{\infty }^2\,.
\end{align*}
The first inequality is due to Lemma~\ref{thm:interpolation_error}, in the second inequality, we calculate the derivative, where we used that the second  
derivative of $P1$ finite elements vanishes on every tetrahedron. In order to infer the last inequality, we use H\"older's inequality and inverse estimates. 

In order to pass from the mass-lumped inner product to the standard $L^2(\Omega)$ inner product,  other occurring terms have to be estimated in the same fashion, but locally. Regarding the second term for example, we observe
\begin{align*}
     \int_\Omega & (I - \interpol)(\overline{\underline{d}}^k_h\times [(\nabla \overline{v}^k_h)_{\sym} \overline{\underline{d}}^k_h ] \cdot \overline{\underline{d}}^k_h \times \project{\fat c}) \diff x
     \\&\leq    
     C   h^2 \sum_{K\in \mathcal{T}_h} \norm{\nabla^2(\overline{\underline{d}}^k_h\times [(\nabla \overline{v}^k_h)_{\sym} \overline{\underline{d}}^k_h] \cdot \overline{\underline{d}}^k_h \times \project{\fat c})}_{L^2(K)} \norm{1}_{L^2(K)}
\\&\leq C h^{2+3/2} \sum_{K\in\mathcal T_h}    \sum_{i,j=1}^3 \norm{ \partial_{x_i} \overline{\underline{d}}^k_h\times    [\partial_{x_j}(\nabla \overline{v}^k_h)_{\sym} \overline{\underline{d}}^k_h] \cdot (\overline{\underline{d}}^k_h \times \project{\fat c})}_{L^2(K)} 
   \\
   &\quad + C h^{2+3/2} \sum_{K\in\mathcal T_h} \sum_{i,j=1}^3
   \norm{ \partial_{x_i} \overline{\underline{d}}^k_h\times   [(\nabla \overline{v}^k_h)_{\sym} \partial_{x_j}\overline{\underline{d}}^k_h  ] \cdot( \overline{\underline{d}}^k_h \times \project{\fat c})}_{L^2(K)} 
    \\
   &\quad + C h^{2+3/2} \sum_{K\in\mathcal T_h} \sum_{i,j=1}^3
   \norm{ \partial_{x_i} \overline{\underline{d}}^k_h\times   [(\nabla \overline{v}^k_h)_{\sym} \overline{\underline{d}}^k_h] \cdot \partial_{x_j} ( \overline{\underline{d}}^k_h \times \project{\fat c})}_{L^2(K)} 
  \\
      &\quad + C h^{2+3/2} \sum_{K\in\mathcal T_h} \sum_{i,j=1}^3
   \norm{  \overline{\underline{d}}^k_h\times   [\partial_{x_i}(\nabla \overline{v}^k_h)_{\sym}\partial_{x_j} \overline{\underline{d}}^k_h] \cdot( \overline{\underline{d}}^k_h \times \project{\fat c})}_{L^2(K)} 
    \\
   &\quad + C h^{2+3/2} \sum_{K\in\mathcal T_h} \sum_{i,j=1}^3
   \norm{ \overline{\underline{d}}^k_h\times   [\partial_{x_i}(\nabla \overline{v}^k_h)_{\sym} \overline{\underline{d}}^k_h] \cdot \partial_{x_j} ( \overline{\underline{d}}^k_h \times \project{\fat c})}_{L^2(K)} 
      \\
   &\quad + C h^{2+3/2} \sum_{K\in\mathcal T_h} \sum_{i,j=1}^3
   \norm{\overline{\underline{d}}^k_h\times   [  (\nabla \overline{v}^k_h)_{\sym}\partial_{x_i} \overline{\underline{d}}^k_h] \cdot \partial_{x_j} ( \overline{\underline{d}}^k_h \times \project{\fat c})}_{L^2(K)} 
  \\
   &\quad + C h^{2+3/2} \sum_{K\in\mathcal T_h} \sum_{i,j=1}^3
   \norm{ \overline{\underline{d}}^k_h\times   [(\nabla \overline{v}^k_h)_{\sym} \overline{\underline{d}}^k_h] \cdot \partial^2_{x_ix_j}( \overline{\underline{d}}^k_h \times \project{\fat c})}_{L^2(K)} 
   \,,
\intertext{where we used that the second derivatives of the Lagrangian finite element functions vanish on the set $K$. 
Using H\"older's inequality and inverse estimates for all appearing terms, we  infer}
     \int_\Omega & (I - \interpol)(\overline{\underline{d}}^k_h\times [(\nabla \overline{v}^k_h)_{\sym} \overline{\underline{d}}^k_h] \cdot \overline{\underline{d}}^k_h \times \project{\fat c}) \diff x
     \\&\leq
 C h \LTwoNorm{(\nabla \overline{v}^k_h)_{\sym}}\lebnorm{\overline{\underline{d}}^k_h}{\infty}^2 \LTwoNorm{\nabla \overline{\underline{d}}^k_h}\lebnorm{\project \fat c }{\infty } 
 \\&\quad+
 C h \LTwoNorm{(\nabla \overline{v}^k_h)_{\sym}}\lebnorm{\overline{\underline{d}}^k_h}{\infty}^2\LTwoNorm{\nabla\project \fat c }\lebnorm{\overline{\underline{d}}^k_h }   \,.
\end{align*}
The right-hand side is bounded due to the $H^1$-stability of the $L^2$ projection~ (\textit{cf.}~Lemma~\ref{thm:stability_projection}). 
The estimates for the difference of the mass-lumped and $L^2(\Omega)$ inner product for the other terms follow analogously. 
Thus, it remains to pass to the limit in~\eqref{discrete_scheme_c} with $L^2(\Omega)$- inner products instead of the lumped-inner products. Note that by standard approximation arguments $ \project{\fat c} \to \fat c $ in $L^3(\Omega \times (0,T))$ for $\fat c\in\Cont^\infty_c(\Omega \times (0,T))$, \textit{cf.} Lemma~\ref{thm:stability_projection}. 
The weak convergences of~\eqref{converging_subsequences} and~\eqref{converging_subsequences_dissipation}  allow to pass to the limit in all occurring terms, and we infer 
\begin{align*}
\int_0^T \int_\Omega \t \fat d \cdot \fat c
- (\nabla \fat v )_{\skw} \fat d \cdot \fat c + ( \fat d \times (\nabla \fat d \fat v +  \lambda ( \nabla \fat v )_{ \sym} \fat d - \Delta \fat d ) ) \cdot ( \fat d \times \fat c) \diff x \diff t = 0 
\end{align*}
for all $\fat c \in L^2(0,T;L^3(\Omega))$ by density arguments. The regularity of $\fat d$ suffices to infer $2 \fat d \cdot ( \nabla \fat d \fat v ) = ( \fat v \cdot \nabla ) \vert \fat d\vert^2 $ such that $ ( \fat d \times \nabla \fat d \fat v ) \cdot ( \fat d \times \fat c ) = ( \fat v \cdot \nabla ) \fat d \cdot \fat c $. Since the relation holds for all $\fat c \in L^2(0,T;L^3(\Omega))$ it also holds a.e.~in $\Omega \times (0,T)$, which is nothing else than~\eqref{weak:d}.

\subsection{Convergence to the energy-variational formulation}\label{sec:envar}
Multiplying the discrete energy-variational formulation~\eqref{discrete_envar} by ${\fat \phi}^j$ for a $\fat \phi \in C_c^\infty( 0,T)$ applying an discrete integration as well as a discrete integration-by-parts, we find
\begin{align}
\begin{split}\label{discrete_envar_sum}
-{}& \int_0^T (\partial _t \hat {\fat \phi}^k ) \un{E} ^k \de t 
+   \int_0^T \ov  {\fat \phi}^k  \frac{1}{2}\LTwoNorm{\partial_t \hat{v}^k_h}^2 \de t 
\\&+  \int_0^T \ov {\fat \phi}^k\left [\left ( \ov{T_D}^k_h ; (  \nabla \ov{v}^k_h)_{\sym} - ( \nabla \projectV\ov{\vv}^k )_{\sym} \right )  +  \left \lVert \ov{\un{d}}^k_h \times \Delta_h \ov{\un{d}}^k_h \right \rVert 
^2 _h\right ]\de t 
\\&
+  \int_0^T \left [ ( \un{v}^k_h ,\partial_t  \projectV\ov{\vv}^k \ov {\fat \phi}^k + \projectV\un{\vv}\partial_t \ov{\fat \phi}^k ) - \ov{\fat \phi} ^k \left ( (\un v ^{k}_h\cdot \nabla) \ov{v}^k_h ,\projectV \ov{\vv}^k  \right ) \right ] \de t \\ &-\int_0^T \ov{\fat \phi}^k\left [\frac{1}{2} \left ( ( \di  \un v ^{k}_h )\ov{v}^k_h , \projectV\ov{\vv}^k\right )+ \left ( [\projectL{\nabla \un{d}^k}]^T [\ov{\un{d}}^k_h\times (\ov{\un{d}}^k_h \times \Delta_h \ov{\un{d}}^k_h )], \projectV\ov{\vv}^k \right ) _h \right ]\de t 
\\
        &- \int_0^T\ov {\fat \phi}^k\left [\lambda (\ov{\un{d}}^k_h \times [(\nabla \projectV\ov{\vv}^k )_{sym}\ov{\un{d}}^k_h],\ov{\un{d}}^k_h \times \Delta_h \ov{\un{d}}^k_h)_h  
    -( (\nabla \projectV\ov{\vv}^k)_{skw} \Delta_h \ov{\un{d}}^k_h, \ov{\un{d}}^k_h)_h\right ]\de t 
  \\&+ \int_0^T \ov {\fat \phi}^k\mathcal{K}(\projectV\ov{\vv}^k) \left ( \frac{1}{2}\Vert \un{v}^k_h\Vert_{L^2(\Omega)}^2 + \frac{1}{2}\Vert \nabla \un d^{k}_h\Vert_{h}^2  - \un E^k \right  )\de t \leq  0\,.
\end{split}
\end{align}
Since ${\fat \phi}\geq 0$ for all $t \in [0,T]$, the second term can be estimated from below by zero. 
We observe that the terms $\left (\ov{ T_D}^k_h ; (\nabla \ov{v}^k_h)_{\sym} \right )  $ are non-negative and quadratic in $\nabla \ov v ^k_h$ such that they are convex and weakly-lower continuous. 
Furthermore, we observe that 
\begin{equation*}
- \left ( [\projectL{\nabla {\un{d}}^k_h}]^T [\ov{\un{d}}^k_h\times (\ov{\un{d}}^k_h \times \Delta_h \ov{\un{d}}^k_h )], \projectV \ov{\vv}^k \right ) _h  = 
 \left ( \ov{\un{d}}^k_h\times[\projectL{\nabla {\un{d}}^k_h}]  \projectV \ov{\vv}^k , \ov{\un{d}}^k_h \times \Delta_h \ov{\un{d}}^k_h\right ) _h \,,
\end{equation*} 
and from from inequality~\eqref{projectest} that $\LTwoNorm{\nabla{\un{d}}^k_h  } ^2 \geq \left \Vert \projectL{\nabla {\un{d}}^k_h}\right \Vert_{h}^2$
such that we may find 
\begin{align*}
& \left( \ov{\un{d}}^k_h \times \Delta_h \ov{\un{d}}^k_h,\ov{\un{d}}^k_h \times \Delta_h \ov{\un{d}}^k_h  \right)_h   
+ \mathcal{K}(\projectV \ov{\vv}^k)  \frac{1}{2}\Vert {\nabla {\un{d}}^k_h}\Vert_{L^2(\Omega)}^2 
\\&- \left ( [\projectL{\nabla {\un{d}}^k_h}]^T [\ov{\un{d}}^k_h\times (\ov{\un{d}}^k_h \times \Delta_h \ov{\un{d}}^k_h )], \projectV \ov{\vv}^k \right ) _h 
\\
&= {} \left \Vert (\ov{\un{d}}^k_h \times \Delta_h \ov{\un{d}}^k_h ) + \frac{1}{2}\ov{\un{d}}^k_h\times[\projectL{\nabla {\un{d}}^k_h}]  \projectV \ov{\vv}^k  \right\Vert_h^2
\\&\quad + \frac{1}{4}\Vert \projectV \ov{\vv}^k\Vert_{L^\infty(\Omega)}^2\Vert  \projectL{\nabla {\un{d}}^k_h}\Vert_{h}^2- \frac{1}{4}\Vert \ov{\un{d}}^k_h\times[\projectL{\nabla {\un{d}}^k_h}]  \projectV \ov{\vv}^k  \Vert_{h}^2 \\
&  ={} \left \Vert \mathcal{I}_h(\ov{\un{d}}^k_h \times \Delta_h \ov{\un{d}}^k_h ) + \frac{1}{2} \mathcal{I}_h( \ov{\un{d}}^k_h\times[\projectL{\nabla {\un{d}}^k_h}]  \projectV \ov{\vv}^k ) \right\Vert_{L^2(\Omega)}^2 
\\
&\quad +\frac{1}{4} \int_{\Omega} \mathcal{I}_h \left [ \left ( \Vert \projectV \ov{\vv}^k\Vert_{L^\infty(\Omega)}^2 I - \projectV \ov{\vv}^k \otimes \projectV \ov{\vv}^k \right ) [\projectL{\nabla {\un{d}}^k_h}]^T [\projectL{\nabla {\un{d}}^k_h}]
  \right ]\de  x 
  \\&\quad +\frac{1}{4} \int_{\Omega} \mathcal{I}_h \left [ \left ( \ov{\un{d}}^k_h\cdot [\projectL{\nabla {\un{d}}^k_h}]  \projectV \ov{\vv}^k \right )^2  \right ]\de  x 
  \,.
 \end{align*}

By writing $ \Vert \fat a\Vert_{L^\infty(\Omega)}^2 I - \fat a  \otimes \fat a   =  \left( \sqrt{\Vert \fat a \Vert_{L^\infty(\Omega)}^2 - \vert \fat a \vert } \right)^2I  + \left( \vert \fat a \vert I - \frac{\fat a  \otimes \fat a }{\vert \fat a \vert } \right)^2 $ for $\fat a\in L^\infty(\Omega)$, we may use~\eqref{lump_estim_1} to infer 
\begin{align*}
    & \int_{\Omega} \mathcal{I}_h \left [ \left ( \Vert \projectV \ov{\vv}^k\Vert_{L^\infty(\Omega)}^2 I - \projectV \ov{\vv}^k \otimes \projectV \ov{\vv}^k \right ) [\projectL{\nabla {\un{d}}^k_h}]^T [\projectL{\nabla {\un{d}}^k_h}]
  \right ]\de  x 
  \\& + \int_{\Omega} \mathcal{I}_h \left [ \left ( \ov{\un{d}}^k_h\cdot [\projectL{\nabla {\un{d}}^k_h}]  \projectV \ov{\vv}^k \right )^2  \right ]\de  x \\
   & \geq  \hnorm{  \sqrt{\Vert \projectV \ov{\vv}^k\Vert_{L^\infty(\Omega)}^2 - \vert \projectV \ov{\vv}^k\vert } [\projectL{\nabla {\un{d}}^k_h}]  }^2
   \\&\quad+ \hnorm{ \left( \vert \projectV \ov{\vv}^k\vert I - \frac{\projectV \ov{\vv}^k \otimes \projectV \ov{\vv}^k}{\vert \projectV \ov{\vv}^k\vert } \right)  [\projectL{\nabla {\un{d}}^k_h}] }^2
   + 
   \hnorm{\ov{\un{d}}^k_h\cdot [\projectL{\nabla {\un{d}}^k_h}]  \projectV \ov{\vv}^k  }^2
   \\
   & \geq  \lebnorm{ \mathcal{I}_h \left[ \sqrt{\Vert \projectV \ov{\vv}^k\Vert_{L^\infty(\Omega)}^2 - \vert \projectV \ov{\vv}^k\vert^2} [\projectL{\nabla {\un{d}}^k_h}]  \right]}{2}^2
  \\&\quad+ \lebnorm{ \mathcal{I}_h \left[ \left( \vert \projectV \ov{\vv}^k\vert I - \frac{\projectV \ov{\vv}^k \otimes \projectV \ov{\vv}^k}{\vert \projectV \ov{\vv}^k\vert } \right)  [\projectL{\nabla {\un{d}}^k_h}]  \right]}{2}^2
  \\& \quad + 
   \lebnorm{ \mathcal{I}_h\left [\ov{\un{d}}^k_h\cdot [\projectL{\nabla {\un{d}}^k_h}]  \projectV \ov{\vv}^k  \right ] }{2}^2\,.
\end{align*}
From~\eqref{converging_subsequences_dissipation}, we already know that 
$\mathcal{I}_h(\ov{\un{d}}^k_h \times \Delta_h \ov{\un{d}}^k_h ) \rightharpoonup \fat d \times \Delta \fat d $ in $L^2(\Omega \times (0,T))$.  Additionally, we observe that $$ \mathcal{I}_h( \ov{\un{d}}^k_h\times[\projectL{\nabla {\un{d}}^k_h}]  \projectV \ov{\vv}^k ) \rightharpoonup \fat d \times \nabla  \fat d \cdot  \ov{\vv}^k  \quad\text{in }L^2(\Omega\times (0,T) )$$ for all $ \ov{\vv}^k \in \Cont([0,T];H^2 (\Omega)\cap \V)$ by observing that for all $ \fat \varphi \in \Cont^\infty_c(\Omega)$ and a.e.~$t\in(0,T)$, it holds
\begin{align*}
\Big ( \mathcal{I}_h&( \ov{\un{d}}^k_h\times[\projectL{\nabla {\un{d}}^k_h}]  \projectV \ov{\vv}^k )  , \fat \varphi \Big ) \\={}&
\left ( \ov{\un{d}}^k_h\times[\projectL{\nabla {\un{d}}^k_h}]  \projectV \ov{\vv}^k   , \interad (\fat \varphi) \right )_h 
\\={}& 
\left ( [\projectL{\nabla {\un{d}}^k_h}]     ;\interpol\left [ [\ov{\un{d}}^k_h]_\times^T\interad (\fat \varphi) \otimes \projectV \ov{\vv}^k\right ]\right )_h 
\\={}& 
\left ( {\nabla {\un{d}}^k_h}    ;\interpol\left [ [\ov{\un{d}}^k_h]_\times^T\interad (\fat \varphi) \otimes \projectV \ov{\vv}^k\right ]\right )
\\={}& 
\left ( {\nabla {\un{d}}^k_h}    ;  [\ov{\un{d}}^k_h]_\times^T\interad (\fat \varphi) \otimes \projectV \ov{\vv}^k\right )+
\left ( {\nabla {\un{d}}^k_h}    ;(\interpol-I)\left [ [\ov{\un{d}}^k_h]_\times^T\interad (\fat \varphi) \otimes \projectV \ov{\vv}^k\right ]\right )
\\\leq{}&
\left ( {\nabla {\un{d}}^k_h}    ;  [\ov{\un{d}}^k_h]_\times^T\interad (\fat \varphi) \otimes  \ov{\vv}^k\right )
\\
&{+ \LTwoNorm{ {\nabla {\un{d}}^k_h} } 
\lebnorm{\ov{\un{d}}^k_h}{\infty}
\LTwoNorm{ \interad (\fat \varphi)}
\lebnorm{(I-\projectV ) \ov{\vv}^k}{\infty}
} \\ &
+\LTwoNorm{ {\nabla {\un{d}}^k_h} } 
\LTwoNorm{(\interpol-I)\left [ [\ov{\un{d}}^k_h]_\times^T\interad (\fat \varphi) \otimes \projectV \ov{\vv}^k\right ]}\,.
\end{align*}
The first equality holds due to the definition of the interpolation operator in~\eqref{interpoladjiont}. The second equality is a transformation and the third is the definition of~\eqref{newproject}.  In the fourth equation, a zero is added and the last inequality follows from H\"older's inequality. 
Furthermore, we estimate the second term on the right-hand side of the previous inequality by 
\begin{align*}
&\LTwoNorm{(\interpol-I)\left [ [\ov{\un{d}}^k_h]_\times^T\interad (\fat \varphi) \otimes \projectV \ov{\vv}^k\right ]}^2\\
&\leq C h^{4}\sum_{K\in\mathcal{T}_h}  \left \lVert\nabla^2 \left [ [\ov{\un{d}}^k_h]_\times^T\interad (\fat \varphi) \otimes \projectV \ov{\vv}^k\right ] \right \rVert _{L^2(K)} ^2
\\
&\leq C h^{4}\sum_{K\in\mathcal{T}_h} \left ( \left \lVert \left [ [\nabla \ov{\un{d}}^k_h]_\times^T\nabla\interad (\fat \varphi) \otimes \projectV \ov{\vv}^k
+[\nabla \ov{\un{d}}^k_h]_\times^T\interad (\fat \varphi) \otimes \nabla \projectV \ov{\vv}^k\right ] \right \rVert _{L^2(K)} \right )^2
\\
&\quad +C h^{4}\sum_{K\in\mathcal{T}_h} \left ( \left \lVert \left [ [ \ov{\un{d}}^k_h]_\times^T\nabla\interad (\fat \varphi) \otimes \nabla \projectV \ov{\vv}^k
+[ \ov{\un{d}}^k_h]_\times^T\interad (\fat \varphi) \otimes \nabla^2 \projectV \ov{\vv}^k\right ] \right \rVert _{L^2(K)} \right )^2
\\
& \leq C \left ( h \left[ \norm{\ov{\un{d}}^k_h}_{L^\infty(\Omega)}+ \norm{\nabla \ov{\un{d}}^k_h}_{L^2(\Omega)} \right] \norm{\projectV \ov{\vv}^k}_{H^{2}(\Omega)} \norm{ \interad (\fat \varphi)}_{L^\infty(\Omega)}\right )^2\,.
\end{align*}
The first inequality follows from the local estimate of the interpolation error (\textit{cf.} Lemma~\ref{thm:interpolation_error}) and the quasi-uniformity of the mesh. In order to infer the second inequality, we calculate the derivatives via the product rule and observe that the second derivative vanishes for all function in $[Z_h]^3$. In the third estimate, we use H\"older's inequality and local inverse estimates. 

We note that the interpolation~$\interad$ converges  in $\Cont(\ov\Omega)$ (\textit{cf.}~Lemma~\ref{lem:adint}). 
Moreover, from~\eqref{projection_approximation_vh} and Gagliardo--Nirenberg's inequality~\cite{nirenberg} we infer
\begin{align*}
    \Vert \vv -  \projectV\vv \Vert_{L^\infty(\Omega)} \leq C \lVert \vv -  \projectV\vv \rVert_{H^1(\Omega)}^{1/2} \lVert\vv -  \projectV\vv  \rVert_{H^{2}(\Omega)}^{1/2}  \leq C h^{1/2} \lVert\vv -  \projectV\vv  \rVert_{H^{2}(\Omega)}
\end{align*}
for all $\vv \in H^2(\Omega ) \cap \V $. 
The weak convergence~\eqref{converging_subsequences} and the strong convergence~\eqref{director_strong_convergence} implies the weak convergence
\begin{align*}
\mathcal{I}_h( \ov{\un{d}}^k_h\times[\projectL{\nabla {\un{d}}^k_h}]  \projectV \ov{\vv}^k ) &\rightharpoonup \fat d \times (\nabla \fat d \cdot {\vv} ) \quad \text{in } L^2(\Omega \times (0,T); \Rr^3 )\,.
\intertext{In a similar fashion, we may infer the convergences }
\mathcal{I}_h \left[ \sqrt{\Vert \projectV\ov{\vv}^k\Vert_{L^\infty(\Omega)}^2 - \vert \projectV\ov{\vv}^k\vert } [\projectL{\nabla {\un{d}}^k_h}]  \right]  
&\rightharpoonup  \sqrt{\Vert \vv\Vert_{L^\infty(\Omega)}^2 - \vert \vv\vert } \nabla \fat d 
\,,\\
\mathcal{I}_h \left[ \left( \vert \projectV\ov{\vv}^k\vert I - \frac{\projectV\ov{\vv}^k \otimes \projectV\ov{\vv}^k}{\vert \projectV\ov{\vv}^k\vert } \right)  [\projectL{\nabla {\un{d}}^k_h}]  \right]  &\rightharpoonup 
 \left( \vert \vv\vert I - \frac{\vv \otimes \vv}{\vert \vv\vert } \right) \nabla\fat d 
 \,,\\
  \mathcal{I}_h\left [\ov{\un{d}}^k_h\cdot [\projectL{\nabla {\un{d}}^k_h}]  \projectV {\vv}^k  \right ] &\rightharpoonup 
  \fat d \cdot (\nabla \fat d   {\vv}) 
\end{align*}
in $L^2(\Omega \times (0,T); \Rr^{3\times 3})$ and $  L^2(\Omega \times (0,T); \Rr)$, respectively. 
The weakly-lower semi-continuity of the $L^2(\Omega)$ norm implies 
\begin{align*}
  \liminf_{k ,h\ra 0 } & \int_0^T \ov{\fat \phi}^k \Big[   \left( \ov{\un{d}}^k_h \times \Delta_h \ov{\un{d}}^k_h,\ov{\un{d}}^k_h \times \Delta_h \ov{\un{d}}^k_h  \right)_h   
\\
&
+ \mathcal{K}({\ov{\vv}^k})  \frac{1}{2}\Vert {\nabla {\un{d}}^k_h}\Vert_{L^2(\Omega)  }^2 - \left ( [\projectL{\nabla {\un{d}}^k_h}]^T [\ov{\un{d}}^k_h\times (\ov{\un{d}}^k_h \times \Delta_h \ov{\un{d}}^k_h )], \ov{{\vv}}^k \right ) _h \Big] \de t 
\\
& \geq  \int_0^T {\fat \phi} \left [ \left \Vert \fat d \times \Delta \fat d  + \frac{1}{2} \fat d \times ( \nabla \fat d \vv  ) \right\Vert_{L^2(\Omega)}^2 
+ \frac{1}{4}  \lebnorm{  \sqrt{\Vert \vv \Vert_{L^\infty(\Omega)}^2 - \vert \vv \vert }\nabla \fat d }{2}^2\right ]\diff t  \\& \quad 
   +  \frac{1}{4} \int_0^T {\fat \phi} \left[ \lebnorm{  \left( \vert \vv \vert I - \frac{\vv  \otimes \vv }{\vert \vv \vert } \right)  \nabla \fat d }{2}^2
   + 
   \lebnorm{ \fat d\cdot \nabla \fat d   \vv   }{2}^2 \right] \diff t 
   \\& = \int_0^T {\fat \phi} \left [ \lebnorm{\fat d \times \Delta \fat d }{2}^2 + \left( \fat d \times \Delta \fat d, \fat d \times ( \nabla \fat d \vv  )\right) \right ]\diff t
    \\ & \quad + \frac{1}{4} \int_0^T {\fat \phi}\left [ \lebnorm{\fat d \times ( \nabla \fat d \vv  ) }{2}^2 +  \int_\Omega  \left( \Vert \vv  \Vert_{L^\infty(\Omega)} ^2 - \vert \vv \vert^2 \right) I: [ \nabla \fat d^T] \nabla \fat d \de  x \right ]\diff t \\ & \quad  +\int_0^T {\fat \phi} \int_{\Omega} \left[ \left( \vert \vv  \vert^2 I - \vv  \otimes \vv   \right)  [ \nabla \fat d^T] \nabla \fat d + ( \fat d \cdot \nabla \fat d \vv  )^2\right]\de  x \diff t
    \\&=\int_0^T {\fat \phi} \left [ \lebnorm{\fat d \times \Delta \fat d }{2}^2 + \left( \fat d \times \Delta \fat d, \fat d \times ( \nabla \fat d \vv  )\right) + \frac{1}{4} \Vert \vv  \Vert_{L^\infty(\Omega)} ^2 \lebnorm{\nabla \fat d }{2}^2\right ] \diff t \,.
\end{align*}
The pointwise convergence of $\un{E}^{k}_h\to E$ allows to pass to the limit in the occuring terms, even thought the prefactors are possibly negative. 
In all other terms, the quantities in~\eqref{discrete_envar_sum} $(\nabla v^j_h, \nabla d^j_h, \mathcal{I}_h( \ov{\un{d}}^k_h\times \Delta_h \ov{\un{d}}^k_h) )$ only occur linearly and can thus, be handled in a standard fashion. 
The switch from mass-lumping to the $L^2$ inner product and the convergence of the projection of the gradient can be dealt with as in the previous section.

Passing to the limit in~\eqref{discrete_envar_sum}  leads to the formulation
\begin{align*}
- \int_0^T & {\fat \phi} '(  E-  \int_\Omega \fat v \cdot \vv \de x) \de t   - \int_0^T {\fat \phi} \mathcal{K}(\vv) \left (  E - \mathcal{E}( \fat v , \fat d )  \right )\de t 
\\&
+ \int_0^T{\fat \phi}  \int_\Omega \fat v \cdot \t \vv - (\fat v \cdot \nabla ) \fat v \cdot \vv + \left( \fat d \times \Delta \fat d \right ) \cdot \left ( \fat d \times ( \nabla \fat d \vv  )\right)\de  x \de t 
\\&
+\int_0^T {\fat \phi}  \int_\Omega \left ((\mu_1 + \lambda^2) (\fat d \cdot (\nabla \fat v)_{sym} \fat d) (\fat d \otimes \fat d)
    + \mu_4 (\nabla \fat v)_{sym} 
   \right ) (\nabla \fat v - \nabla \vv) \diff x \diff t 
    \\&+\int_0^T{\fat \phi} \int_\Omega
    (\mu_5 + \mu_6 - \lambda^2) (\fat d \otimes (\nabla \fat v)_{sym}  \fat d)_{sym} (\nabla \fat v - \nabla \vv) 
+ \vert \fat d \times \Delta \fat d \vert^2 
     \diff x \diff t 
    \\
    &+ \int_s^t \int_\Omega \left ( \lambda [\fat d \otimes [\fat d]_x^T  [\fat d]_x \cdot\Delta \fat  d]_{sym}
    +  [\fat d \otimes \Delta \fat  d]_{skw}\right ) : \nabla \vv \diff x \diff \tau 
\leq 0 \,.
\end{align*}
From Lemma~\eqref{lem:invar}, we infer the formulation~\eqref{envarform} with~$\mathcal{K}(\vv)=\frac{1}{2} \Vert \vv\Vert_{L^\infty(\Omega)}^2$ and $$\langle \fat T^E_1(\fat d) , \fat v \rangle = \int_\Omega   \left( \fat d \times \Delta \fat d \right ) \cdot \left ( \fat d \times ( \nabla \fat d \vv  ) \right )\diff x $$ as given in Definition~\ref{def:envar}. 

\section{Computational studies\label{sec:comp}}
An efficient implementation of our discrete scheme \eqref{discrete_scheme} faces two challenges: The  non-linearity in  Scheme~\ref{discrete_scheme} does not allow to rely on well-known and effective solvers for the resulting linear systems. Secondly, all equations are coupled which leads to a high-dimensional mixed problem formulation.
Therefore, by fully linearizing and decoupling all equations we make use of an iterative fixed-point algorithm which was presented in \cite[ch. 5]{thesis_max} and follows the ideas of \cite[Algorithm $A_1$]{Prohl_Schmuck_2010}. 
This approach can be described by the following scheme.
\begin{scheme}{\ref{fp_scheme}}
(1) Let $(v^{j,0},d^{j,0}_{\circ})^T
    =
    (v^{j-1},d^{j-1}_\circ
    )^T \in U_h$ for $j> 1$. For $j=1$ let the initial values be given by $(v^{0}, d^{0})= (\projectV \fat v_0, \interpol \fat  d_0 )$.
\\
(2) For $1 \leq l\leq J$ and $(v^{j,l-1}, d^{j,l-1})\in U_h$, we want to find $(v^{j,l}, d^{j,l}_\circ)\in U_h$, such that
\begin{subequations}\label{fp_scheme}
\begin{align}
    \begin{aligned}\label{fp_scheme_a}
        (\discreteDiff_t  & v^{j,l},a)
        +((v^{j-1} \cdot \nabla) v^{j,l},a) + \frac{1}{2} ((\nabla \cdot v^{j-1} ) v^{j,l},a) 
        +(T_L^{j,l},\nabla a)  &\\&
        - v_{el}A([\projectL{\nabla d^{j-1}}]^T [d^{j-1/2,l-1}\times (d^{j-1/2,l-1} \times \Delta_h d^{j-1/2,l-1} )],a)_h = 0 
        &
    \end{aligned}\\[2ex]
    \begin{aligned}\label{fp_scheme_c}
        (\discreteDiff_t d^{j,l} &,c)_h 
        + v_{el} (d^{j-1/2,l-1}\times [\projectL{\nabla d^{j-1}}v^{j,l-1}], d^{j-1/2,l} \times c)_h & \\&
        - A(d^{j-1/2,l-1} \times \Delta_h d^{j-1/2,l-1},d^{j-1/2,l} \times c)_h & \\&
        -v_{el} ((\nabla v^{j,l-1})_{skw}d^{j-1/2, l}, c)_h & \\&
        +v_{el} \lambda (d^{j-1/2, l-1} \times [(\nabla v^{j,l-1})_{sym}d^{j-1/2,l-1}],d^{j-1/2,l} \times c)_h = 0, &
    \end{aligned}
\end{align}
\end{subequations}
 for all $(a,c)\in U_h$, where the constants $v_{el}, A$ describe the intensity of the coupling of the different physical properties.
 \\ 
 (3) Stop if $\LTwoNorm{v^{j,l}-v^{j,l-1}}+\Hsobnorm{d^{j,l}-d^{j,l-1}}{1} \leq \theta$, where $\theta$ describes the tolerance of the fixpoint-solver.
 \end{scheme}
 For the above scheme, one would solve the discrete Laplace equation computationally after solving \eqref{fp_scheme_a} and \eqref{fp_scheme_c}. This is possible since \eqref{fp_scheme_c} does not depend on $\Delta_h d^{j,l}$. Note that \eqref{fp_scheme_a} and \eqref{fp_scheme_c} could even be solved in parallel.
The Leslie stress tensor is thereby discretized as
\begin{align*}
    T_L^{j,l} \coloneqq & v_{el}(\mu_1 + \lambda^2) (d^{j,l-1} \cdot (\nabla v^{j,l})_{sym} d^{j,l-1}) (d^{j,l-1} \otimes d^{j,l-1})
    + \mu_4 (\nabla v^{j,l})_{sym} \\&
    +v_{el}(\mu_5 + \mu_6 - \lambda^2) (d^{j,l-1} \otimes (\nabla v^{j,l})_{sym}  d^{j,l-1})_{sym} \\&
    + \lambda  v_{el} A [d^{j-1/2,l-1} \otimes [d^{j-1/2,l-1}]_x^T  [d^{j-1/2,l-1}]_x \cdot \Delta_h d^{j-1/2,l-1}]_{sym}
    \\&
    + v_{el} A [d^{j-1/2,l-1} \otimes \Delta_h d^{j-1/2,l-1}]_{skw}\,.
\end{align*}
Any fixed-point of scheme \eqref{fp_scheme} solves the highly non-linear scheme \eqref{discrete_scheme}.
By standard arguments, one can derive conditional convergence of our fixpoint algorithm, i.e. $u^{j,l} \to u^j \in U_h$ as $l\to \infty$ for a sufficiently small $k,h$. For more details, we refer to \cite{thesis_max}.

Since we introduced the constants $v_{el},A$ for the physical coupling of the quantities, the energy law changes slightly and we rescale the total energy to
\begin{equation*}
  \mathcal{E}(\fat v , \fat d) 
\coloneqq
 \frac{1}{2}\lVert \fat v\rVert_{L^2(\Omega)}^ 2 + \frac{A}{2}\LTwoNorm{\nabla  \fat d}^2 \,.
\end{equation*}
The computational studies consist of three numerical experiments which have been standard benchmarks for past numerical methods of nematic liquid crystals (see e.g. \cite{lasarzik_main,Becker_Prohl_2008,Liu_Walkington,cabralesFEM}).
As a simplified version of our discrete scheme we consider one analogous to the Ericksen--Leslie interactions in \cite{lasarzik_main} and similar to \cite[Algorithm 4.1]{Becker_Prohl_2008}.
The simplified scheme is attained by choosing a reduced (discrete) version of the Leslie stress tensor, \textit{i.e.}
\begin{align}\label{def:simple_model}
    (T_L^j, a) = \nu (\nabla v^j, \nabla a).
\end{align}
Accordingly the fourth and last term in the director equation \eqref{discrete_scheme_c} of our scheme are also neglected.
Both schemes can be solved by the fixed-point iteration described above. The tolerance of the iterative fixpoint solver was set to $\theta = 10^{-6}$.
The python implementation of the fixed-point solver relies on the API of the finite element package FEniCS (\textit{cf.} \cite{Fenics1,Fenics2}). 
The implementation can be found in \cite{fenics_code}.

If not mentioned otherwise, we employ homogeneous Dirichlet boundary (no-slip) conditions for the velocity and non-homogeneous Dirichlet boundary conditions for the director as in \eqref{system}.
The parameter choices for the following experiments can be found in table \ref{fig:parameter_table}. Our choice of parameters is rather exemplary and analogous to the experiments considered in \cite{lasarzik_main,Becker_Prohl_2008,Liu_Walkington,cabralesFEM}. In particular regarding the choice of $\mu$, other choices are most definitely conceivable.
Note that the dissipative character of the system is fulfilled, \textit{i.e.,} $\mu_4 >0, \quad \mu_5+\mu_6 -\lambda^2>0, \quad \mu_1 + \lambda^2 >0$. Further, $\nu = \mu_4$ is fulfilled which shall allow a comparison of the models.
Since the convergence of our fixed-point solver relies on a contraction property, a smaller choice of our temporal discretization parameter $k$ might sometimes still lead to faster computation time, if this leads to less total iterations of the fixed-point solver itself.

\subsection{A smooth example in two spatial dimensions}\label{experiment_1}
We consider the domain $\Omega = (-1,1)^2$ equipped with the initial conditions
\begin{align*}
    d_0 (x) &=
    \begin{pmatrix}
     &\sin \left( 2 \pi \left(\cos(x_1)- \sin(x_2) \right) \right) \\
   & \cos \left( 2 \pi \left(\cos(x_1)-\sin(x_2) \right) \right) 
    \end{pmatrix}
    \text{ for }x\in \Omega,
    \\
    v_0 &=0.
\end{align*}
Although the above analysis is only done for three spatial dimensions, the results can be transferred to the two-dimensional case as well. In this case, the cross product has to be understood in the sense of the matrix $ I- \fat d \otimes \fat d$ (see \eqref{weak:d}), which in three dimensions fulfills $ I- \fat d \otimes \fat d = [\fat d]^T_x [\fat d]_x $.
In this setting, at least for the simplified model, a smooth solution exists except for finitely many points in time (\textit{cf.} \cite[Thm. 1.3]{smooth_solutions_el}).
The results for our simplified model (see \eqref{def:simple_model}, Fig. \ref{fig:smooth}) deviate from the results of previous authors (\textit{cf.} \cite{Becker_Prohl_2008}) due to our Dirichlet boundary conditions. The director exhibits a long-term self-alignment which causes a fluid flow. In opposite to e.g. \cite{Becker_Prohl_2008}, this alignment of the director is not uniform. The same holds for our full Ericksen--Leslie model (see Fig. \ref{fig:smooth_general}) except for the fact that induced velocity field exhibits less symmetry due to the anisotropic dynamics of our Leslie-stress tensor.
\begin{table}[t]
    \centering
    \begin{tabular}{c|c|c|c|c|c| c|c|c|c}
         & $\Omega$& $h$ & $k$ & $\mu_1$ & $\mu_4$ & $\nu$   & $\mu_5$, $\mu_6$, $\lambda$ & A & $\nu_{el}$ 
        \\
        \hline
        Experiment 1 & 
        $(-1,1)^2$& 
        $2^{-5}$ & 0.00025 & 1 &  0.1& 0.1& 1&  1& 1
        \\
        Experiment 2 & 
        $(-0.5,0.5)^3$&
         $2^{-4}$ & 0.00025 & 1  & 1& 1& 1&   1& 0.25 
        \\
        Experiment 3 & 
        $(-0.5,0.5)^3$&
         $2^{-4}$ & 0.00025 & 1  & 1& 1&   1& 0.1& 1
    \end{tabular}
    \caption{Parameter choices for the experiments}
    \label{fig:parameter_table}
\end{table}

\begin{figure}[t]
    \centering
    \includegraphics[width = .49 \textwidth]{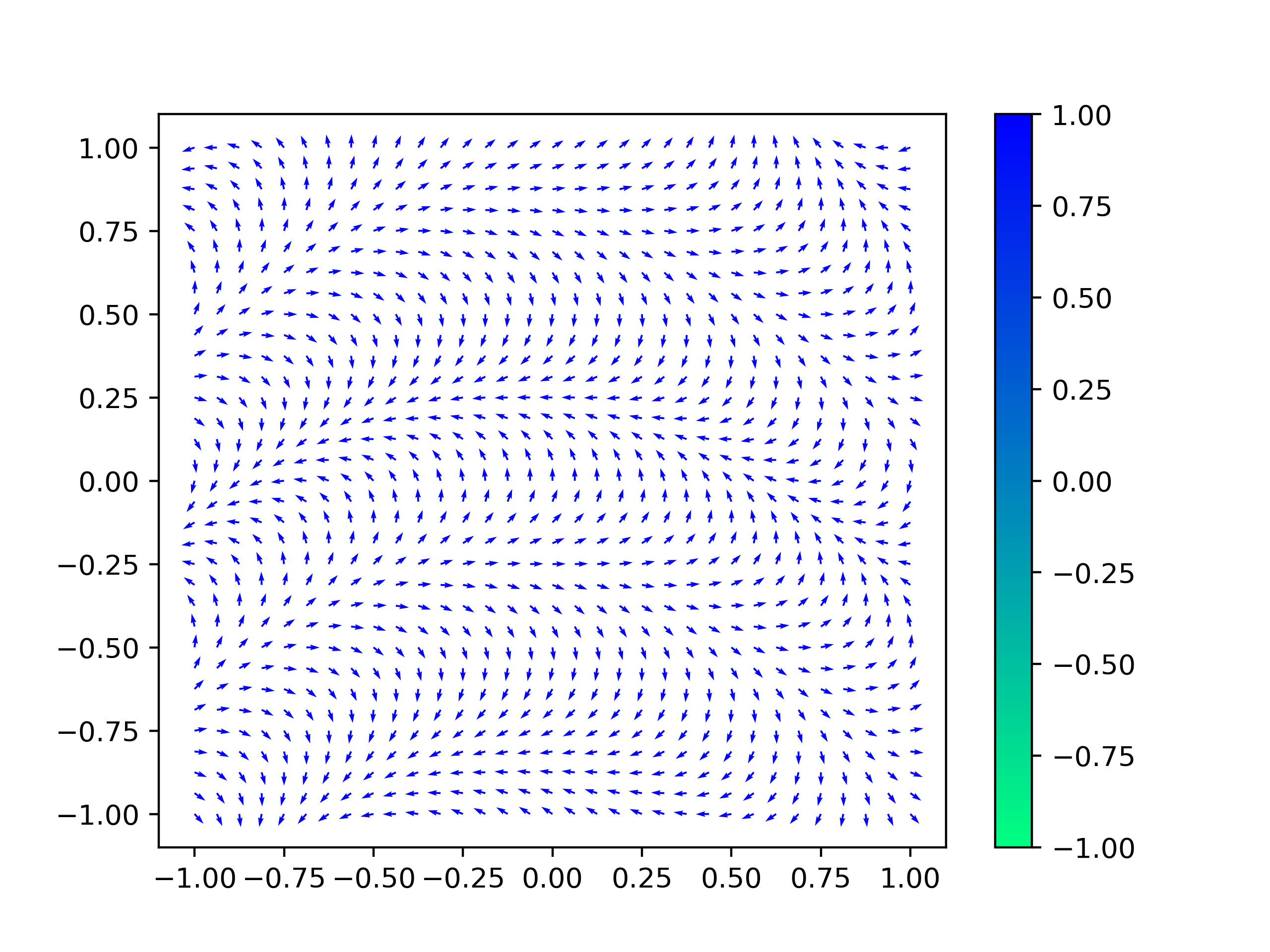}
    \includegraphics[width = .49 \textwidth]{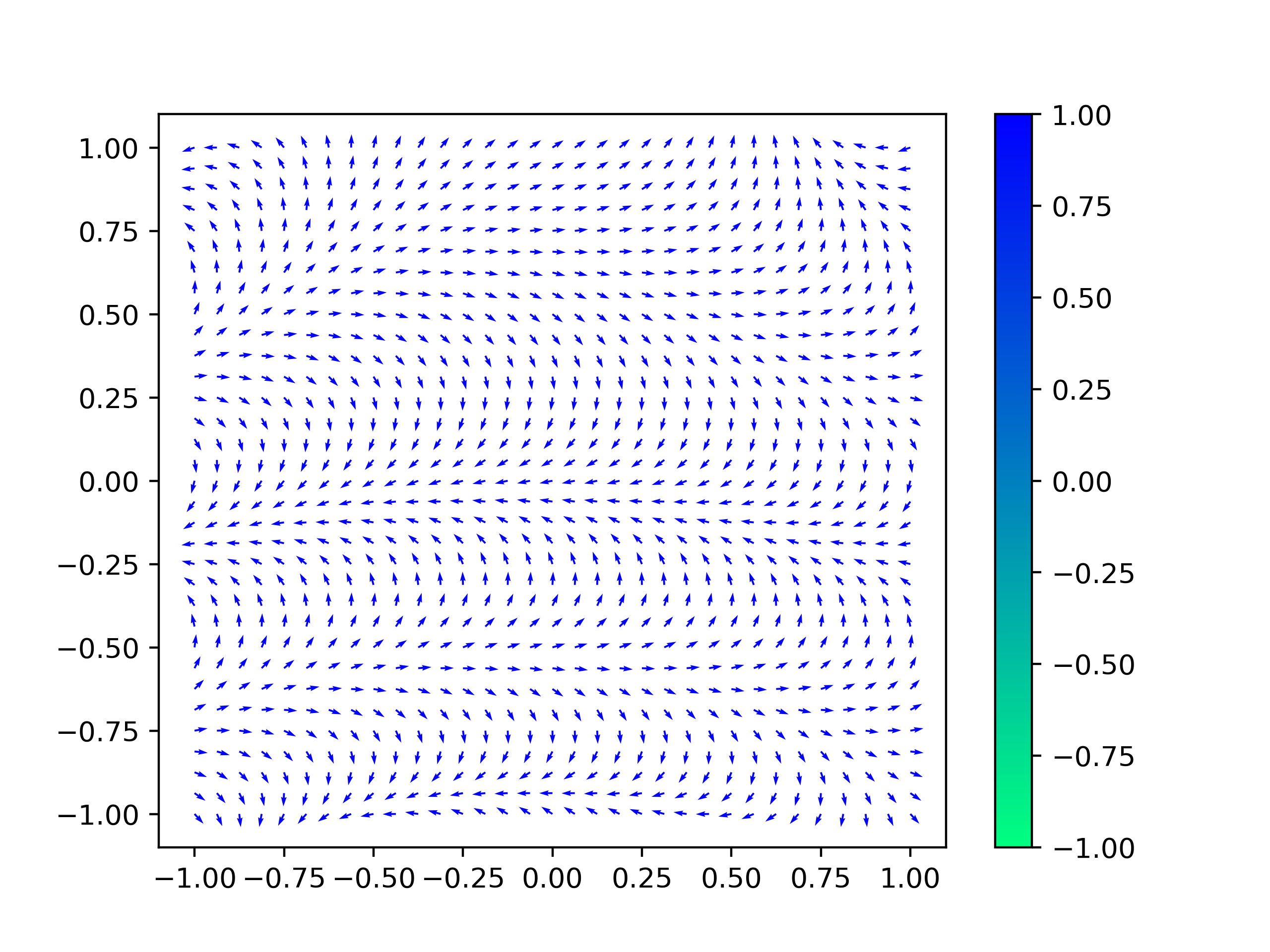}
    \includegraphics[width = .49 \textwidth]{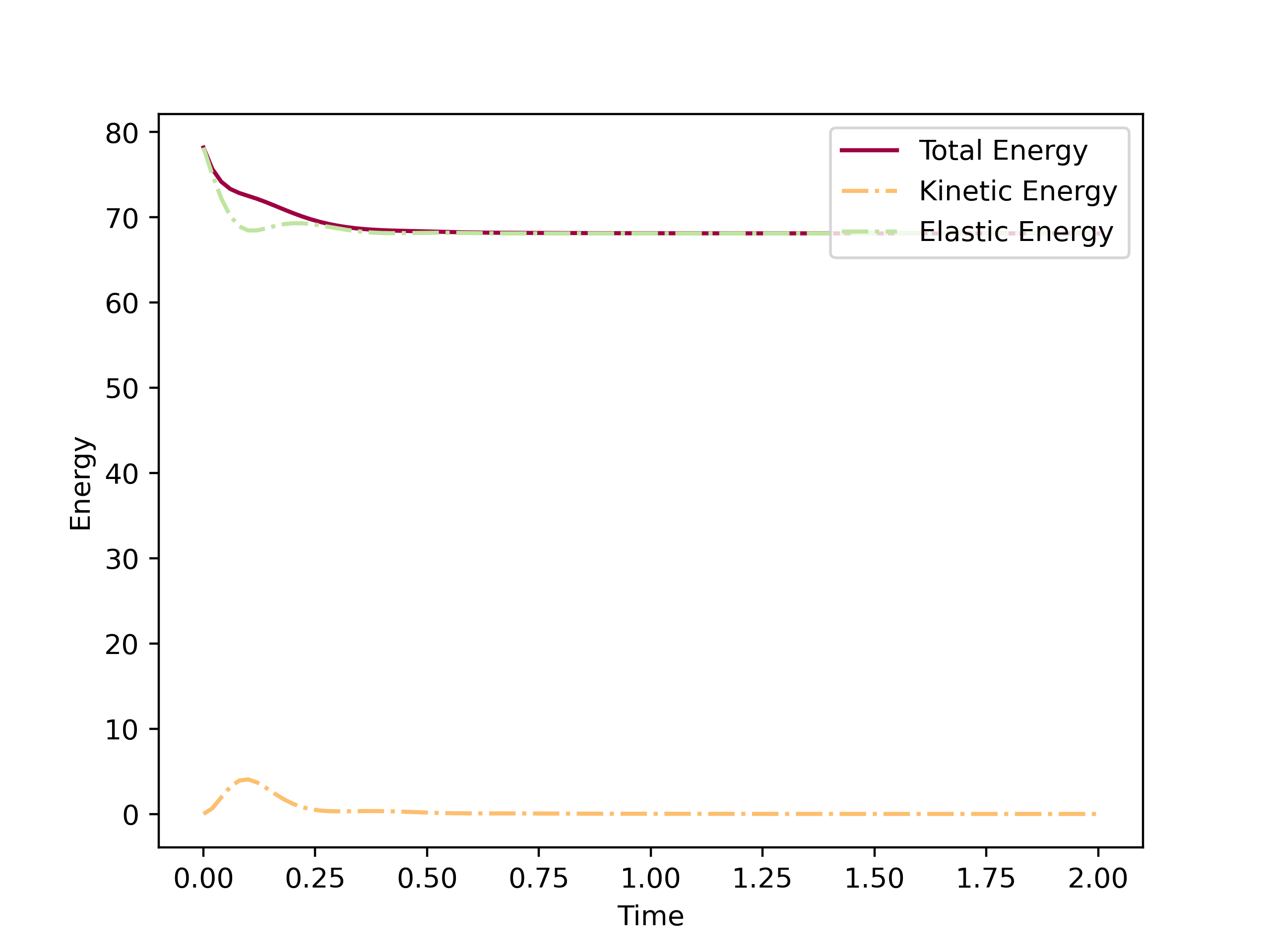}
    \includegraphics[width = .49 \textwidth]{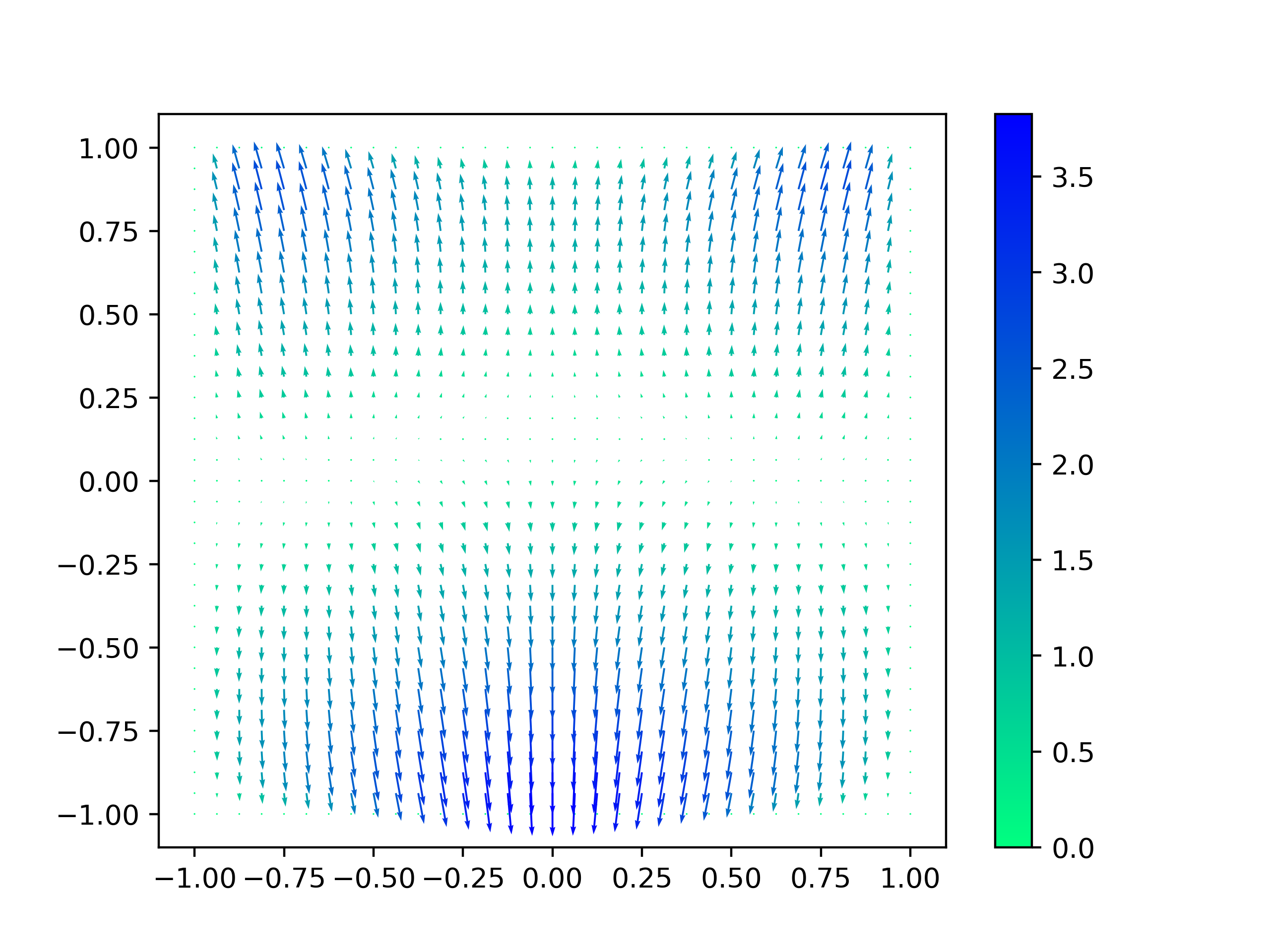}
    \caption{Experiment \ref{experiment_1}, simplified model \eqref{def:simple_model}: Evolution of the director at time $t=0, 2.0$ (from left to right), evolution of the energy (bottom left), velocity field at times $t=0.2$ (bottom right). The colour marks the magnitude of the vectors.
    }
    \label{fig:smooth}
\end{figure}
\begin{figure}[t]
    \centering
    \includegraphics[width = .49 \textwidth]{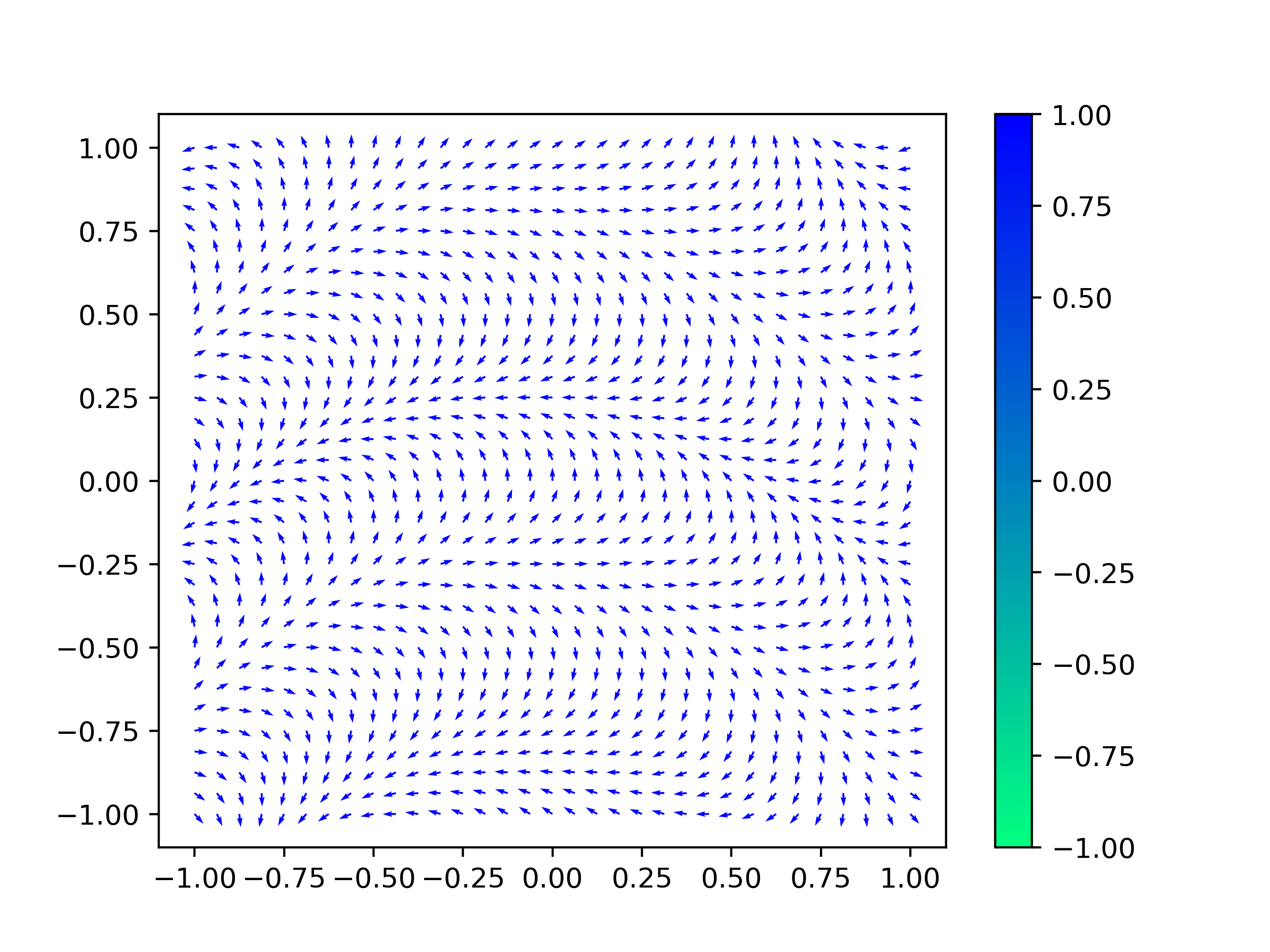}
    \includegraphics[width = .49 \textwidth]{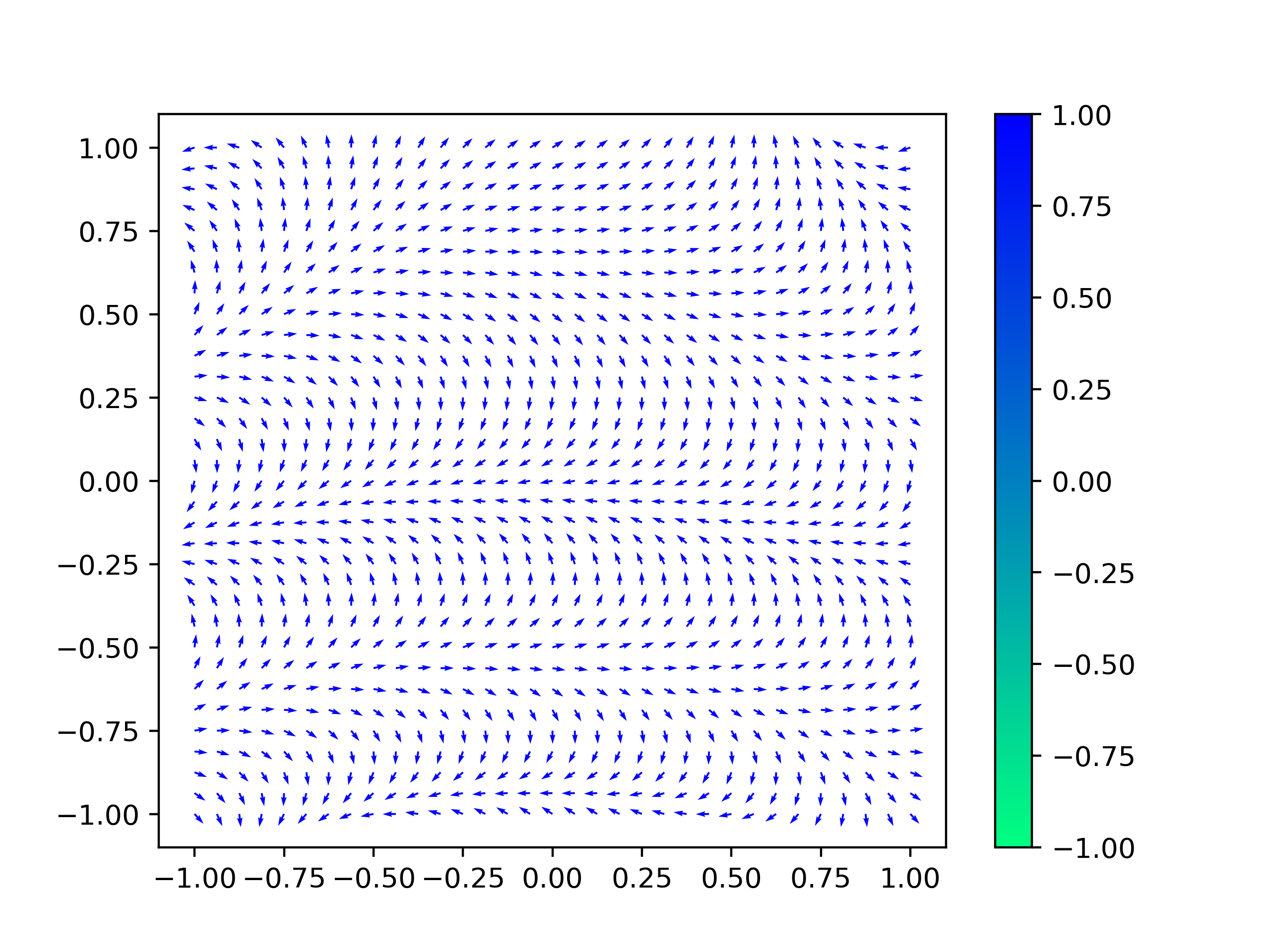}
    \includegraphics[width = .49 \textwidth]{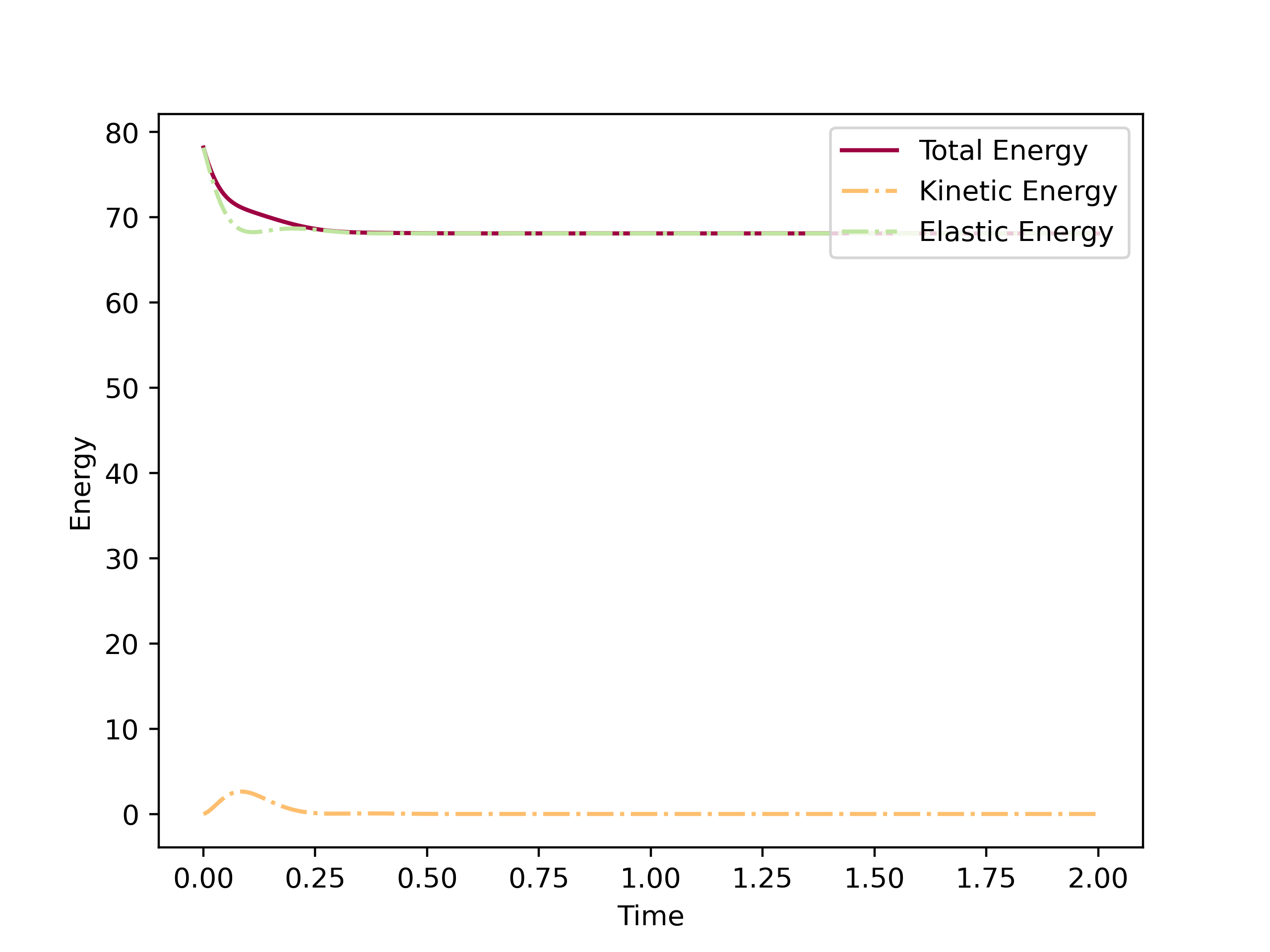}
    \includegraphics[width = .49 \textwidth]{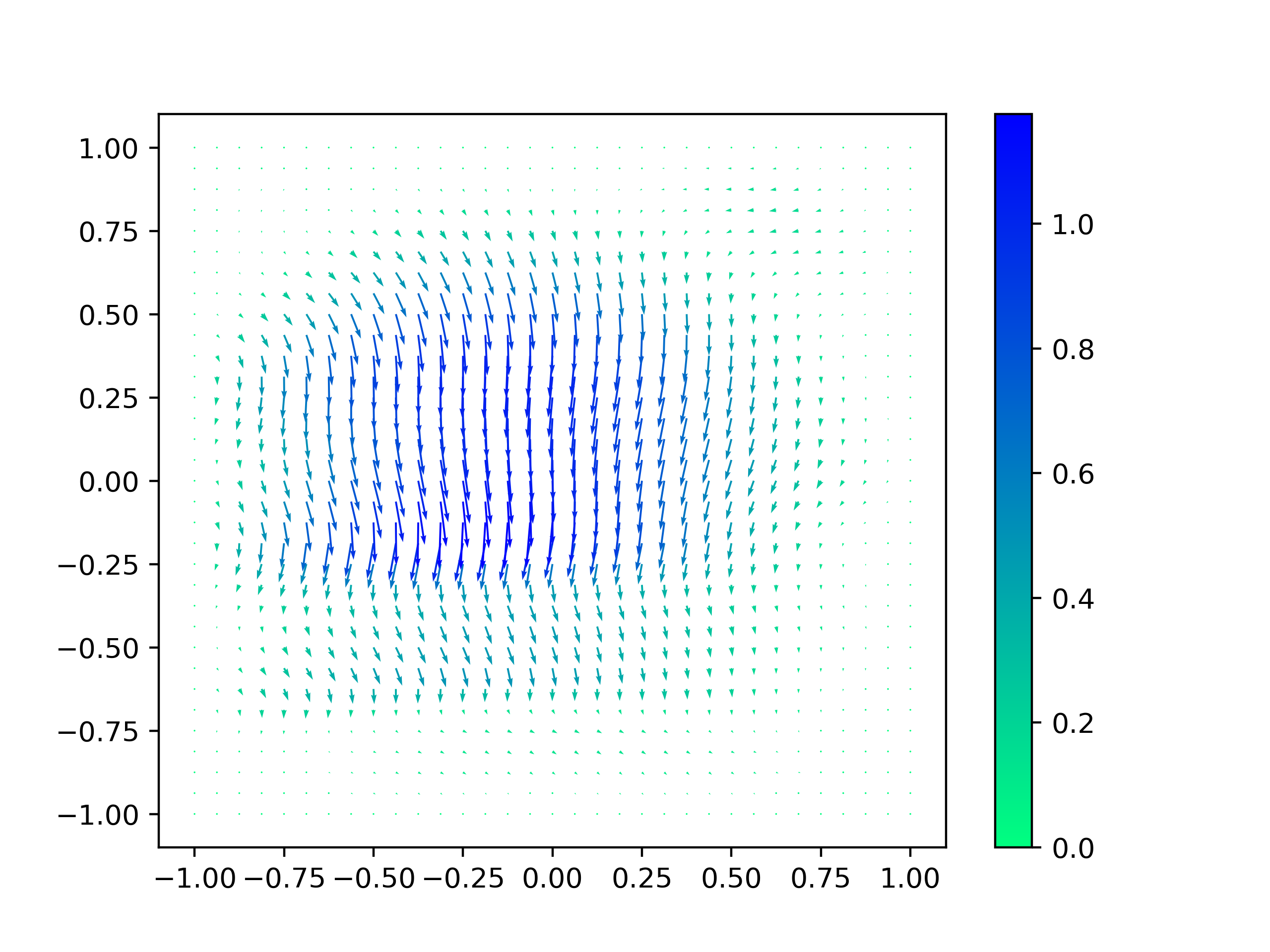}
    \caption{Experiment \ref{experiment_1}, model \eqref{discrete_scheme}: Evolution of the director at time $t=0, 2.0$ (from left to right), evolution of the energy (bottom left), velocity field at times $t=0.2$ (bottom right). The colour marks the magnitude of the vectors.
    }
    \label{fig:smooth_general}
\end{figure}

\subsection{Annihilation of two defects}\label{experiment_2}
On the three-dimensional domain $\Omega = (-0.5 ,0.5)^3$, we set the initial director  to
\begin{align*}
d_0 &=
    \begin{cases}
    \frac{
    \Tilde{d}_0(x)}{
    \norm{\Tilde{d}_0(x)}}
    , \text{ if }  \norm{\Tilde{d}_0(x)}>0 \\
    (0,0,1)^T , \text{ otherwise},
    \end{cases}
\\
\text{for  }
\Tilde{d}_0(x) &=(4 x_1^2+4 x_2^2-0.25, 2x_2, 0)^T , 
\end{align*}
where the locations with $d_0 = (0,0,1)^T$ are considered \textit{defects} of the liquid crystal.
The initial velocity is set to $v_0 = 0$.

It can be observed that for both models the defects slowly annihilate by rotation of the director (see Fig. \ref{fig:annihilation} and Fig. \ref{fig:annihilation_general}). This causes rough swirls to develop in the velocity field. The evolution of the director qualitatively agrees with the results in \cite{Becker_Prohl_2008, lasarzik_main, Liu_Walkington, cabralesFEM}\footnote{Since the norm conservation is fulfilled precisely at every node in our case, we cannot simulate this experiment in two dimensions as done in \cite{Becker_Prohl_2008} because the defects would otherwise be conserved.}.
For the full Ericksen--Leslie model, we observe again anisotropic dynamics transferred to the velocity field as well.

\begin{figure}[t]
    \centering
    \includegraphics[width= .49 \textwidth]{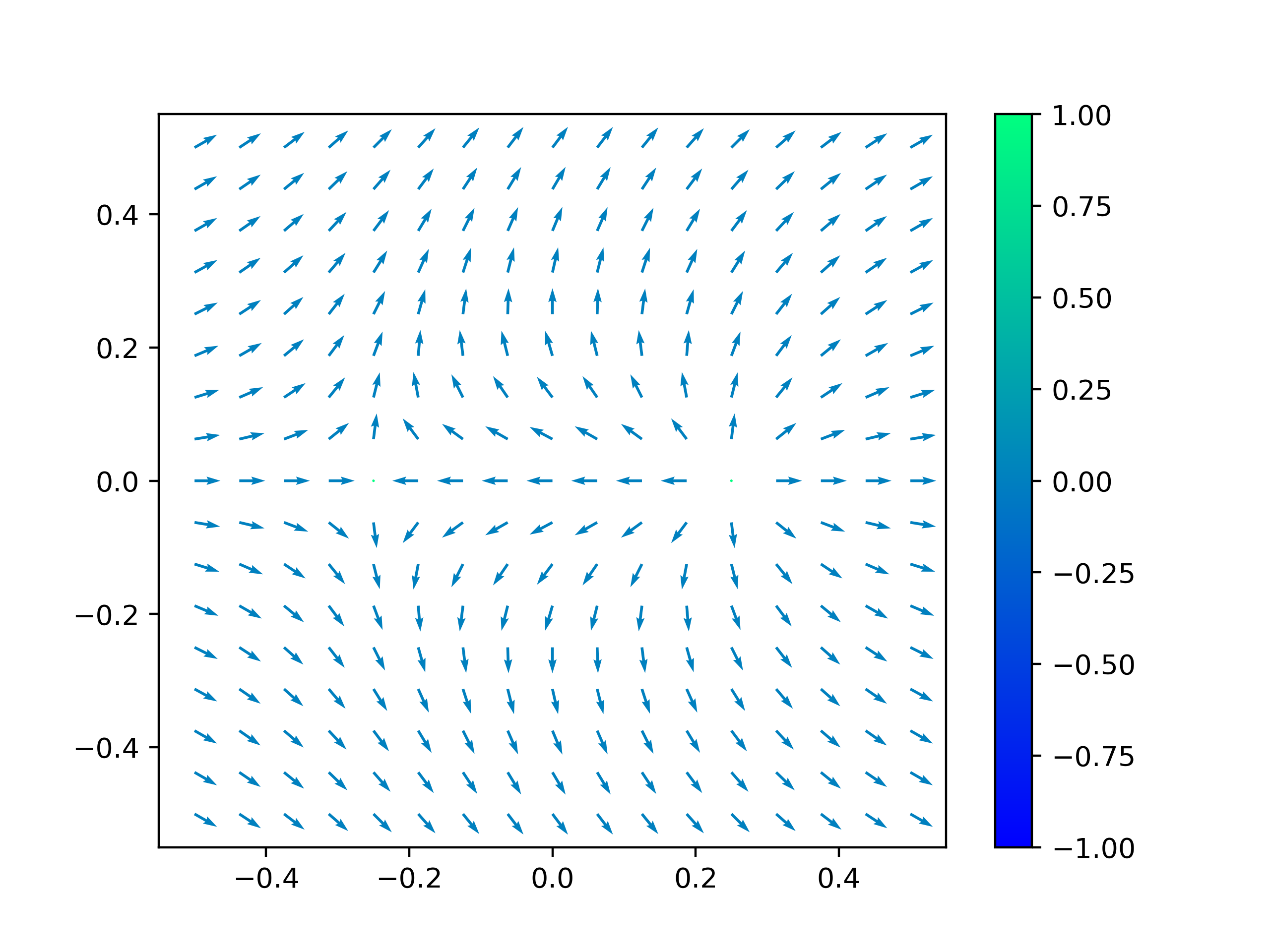}
    \includegraphics[width= .49 \textwidth]{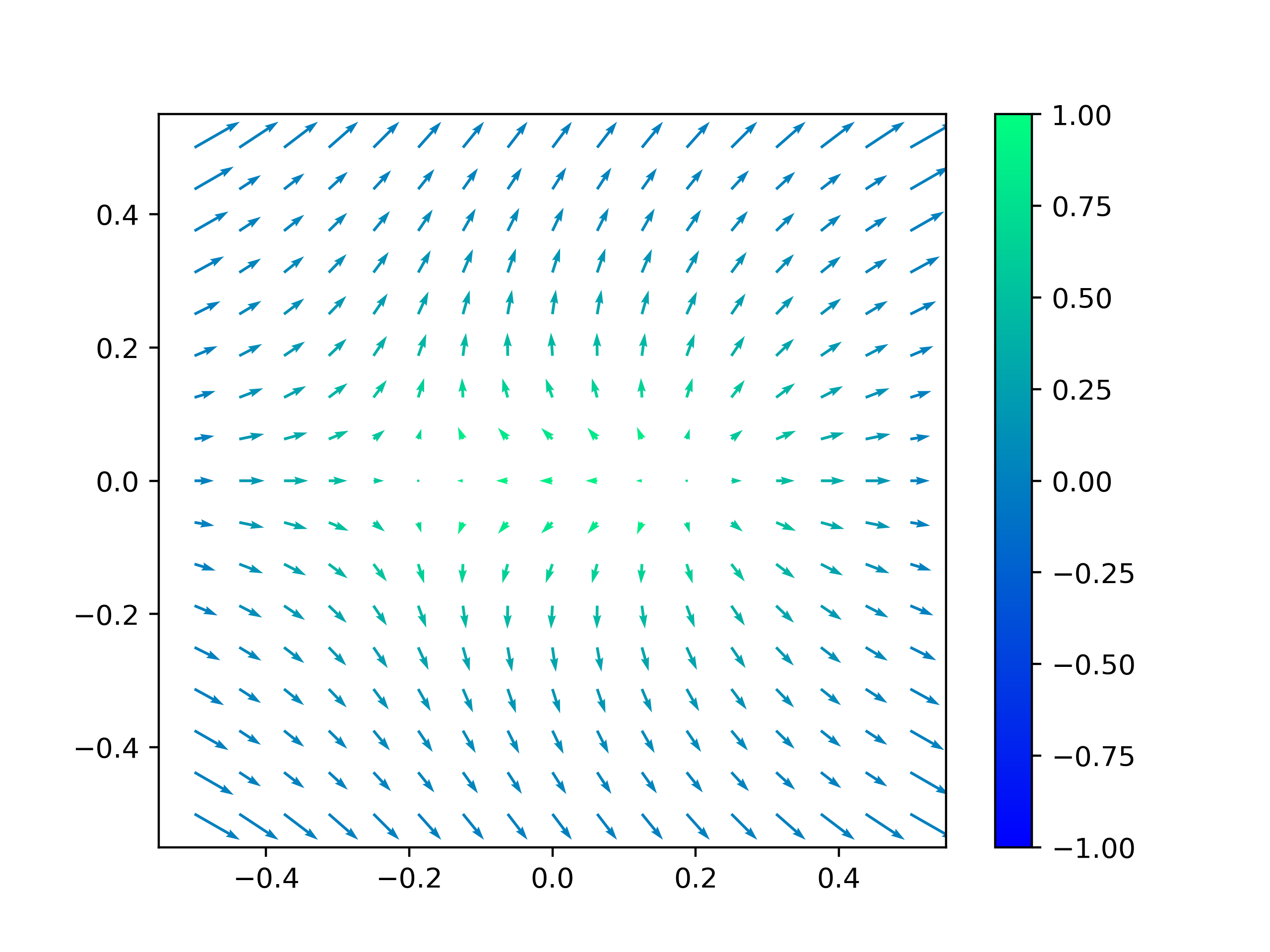}
    \includegraphics[width= .49 \textwidth]{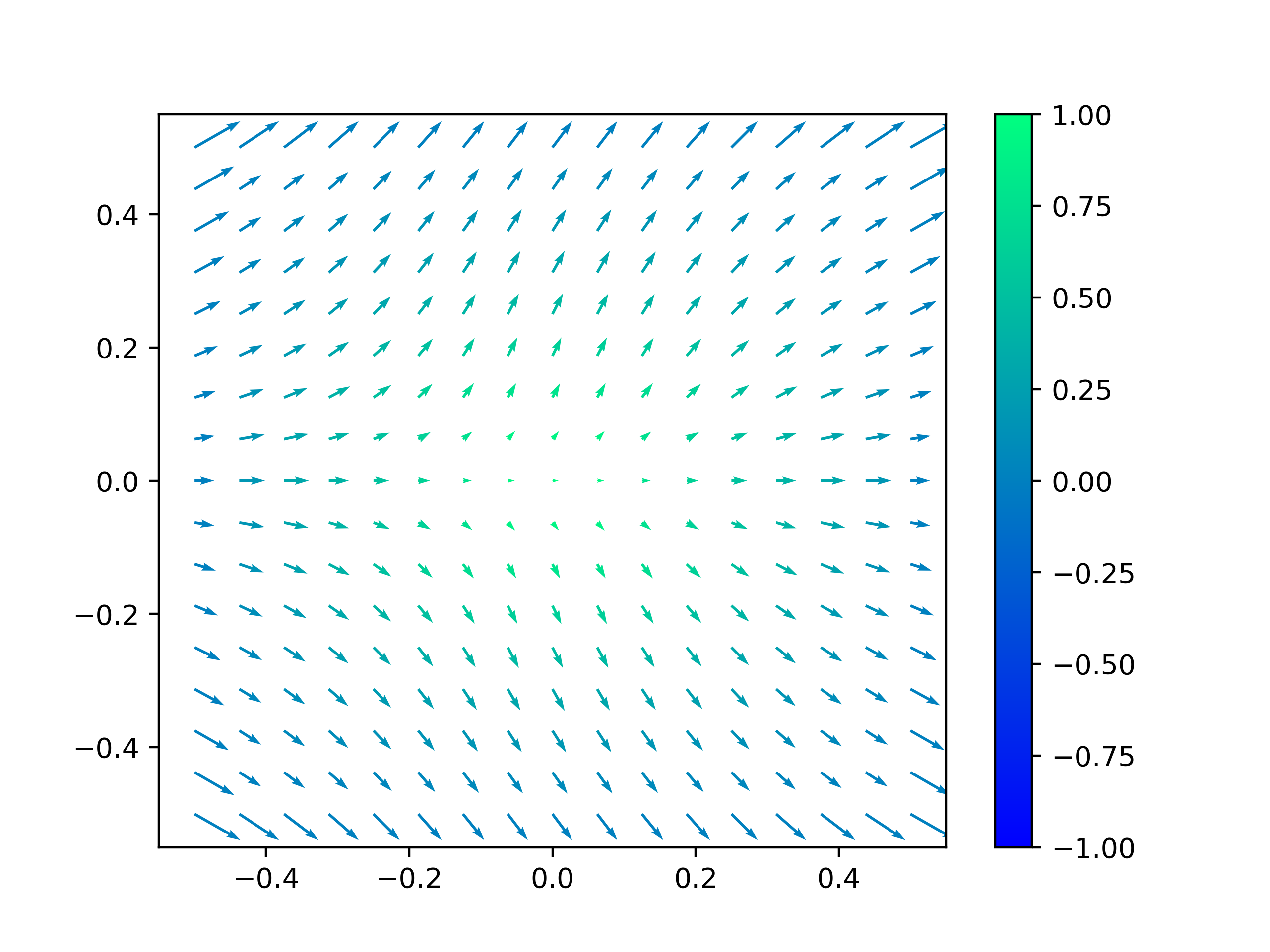}
    \includegraphics[width= .49 \textwidth]{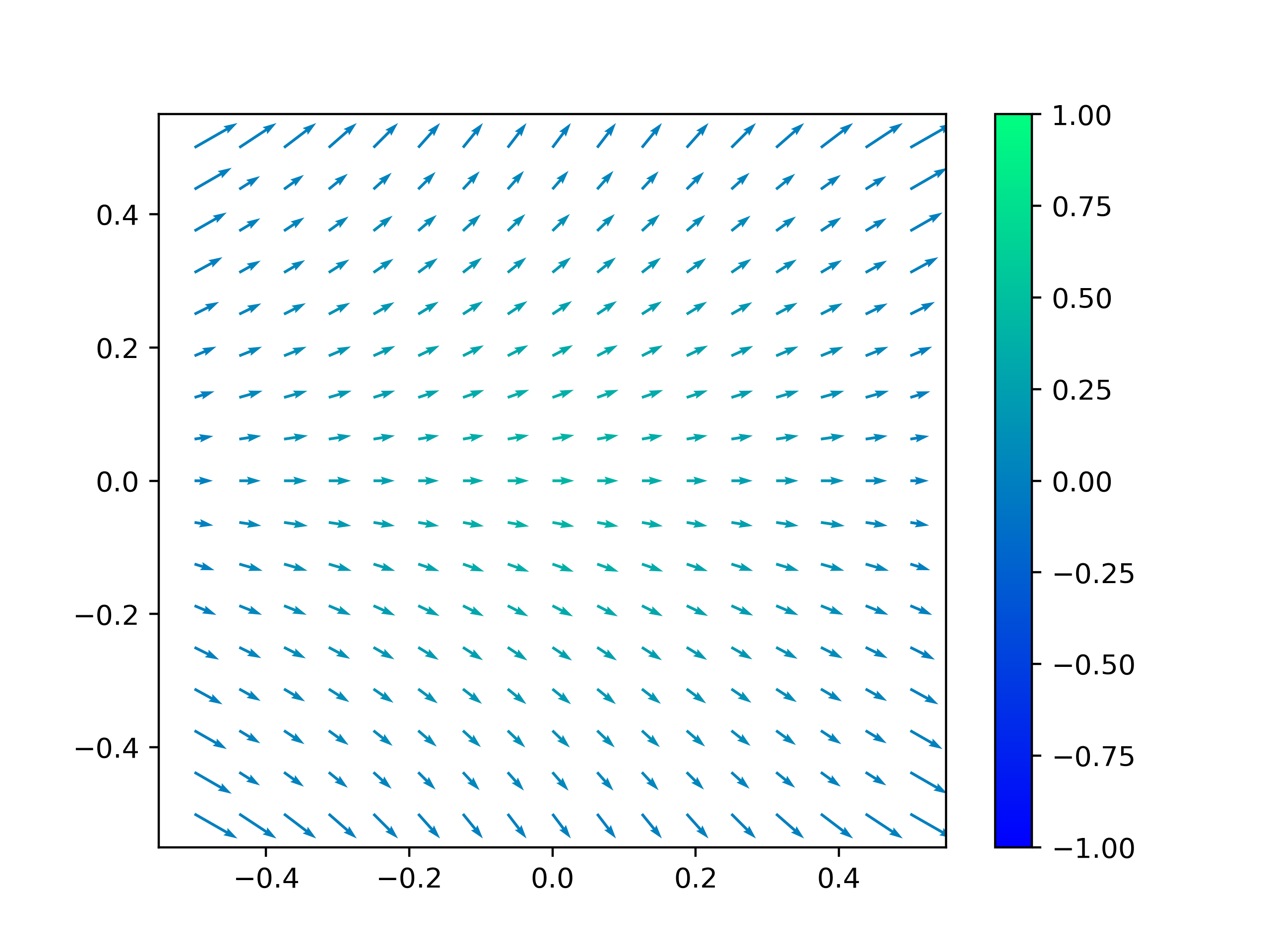}
    \includegraphics[width= .49 \textwidth]{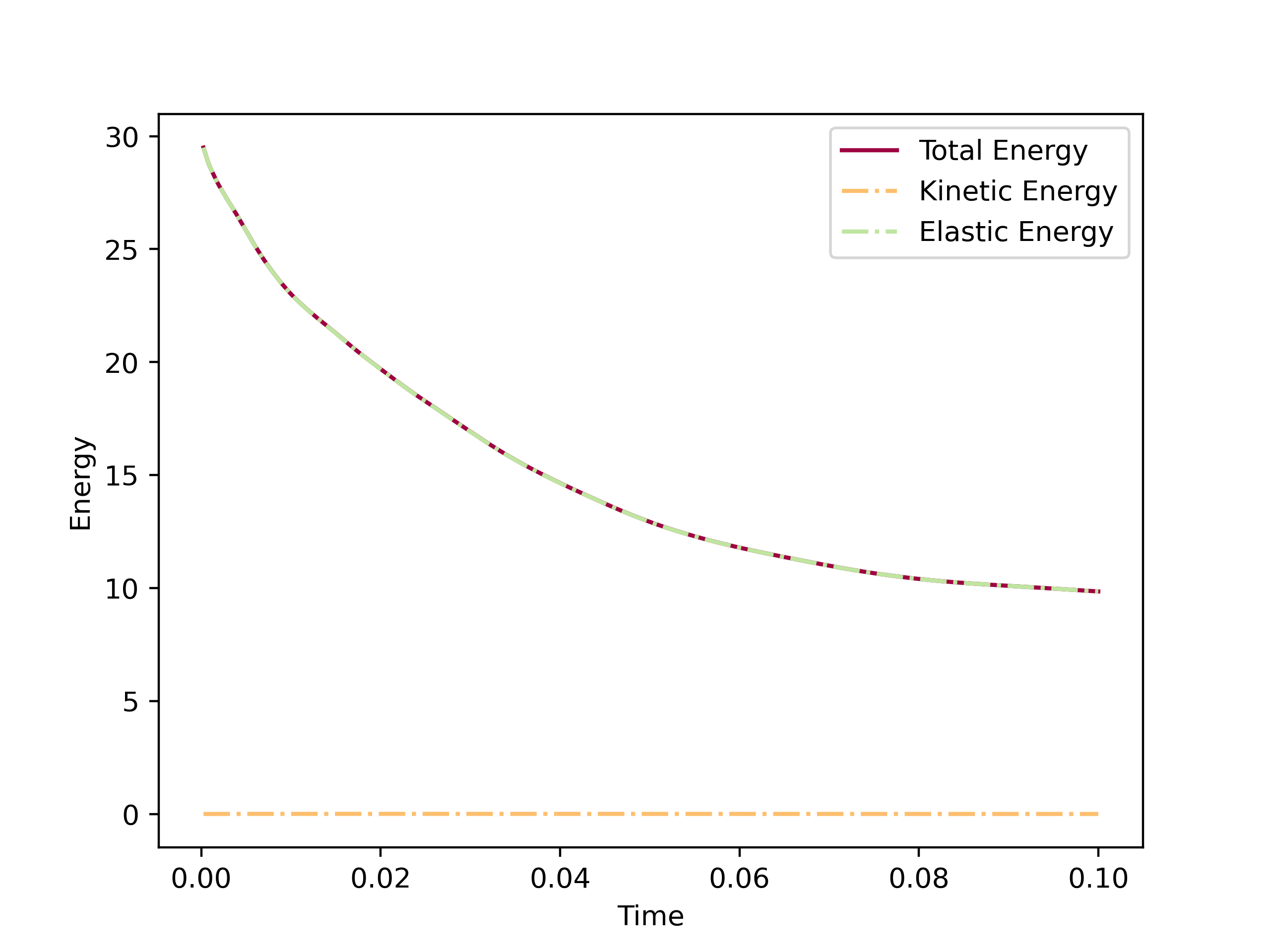}
    \includegraphics[width= .49 \textwidth]{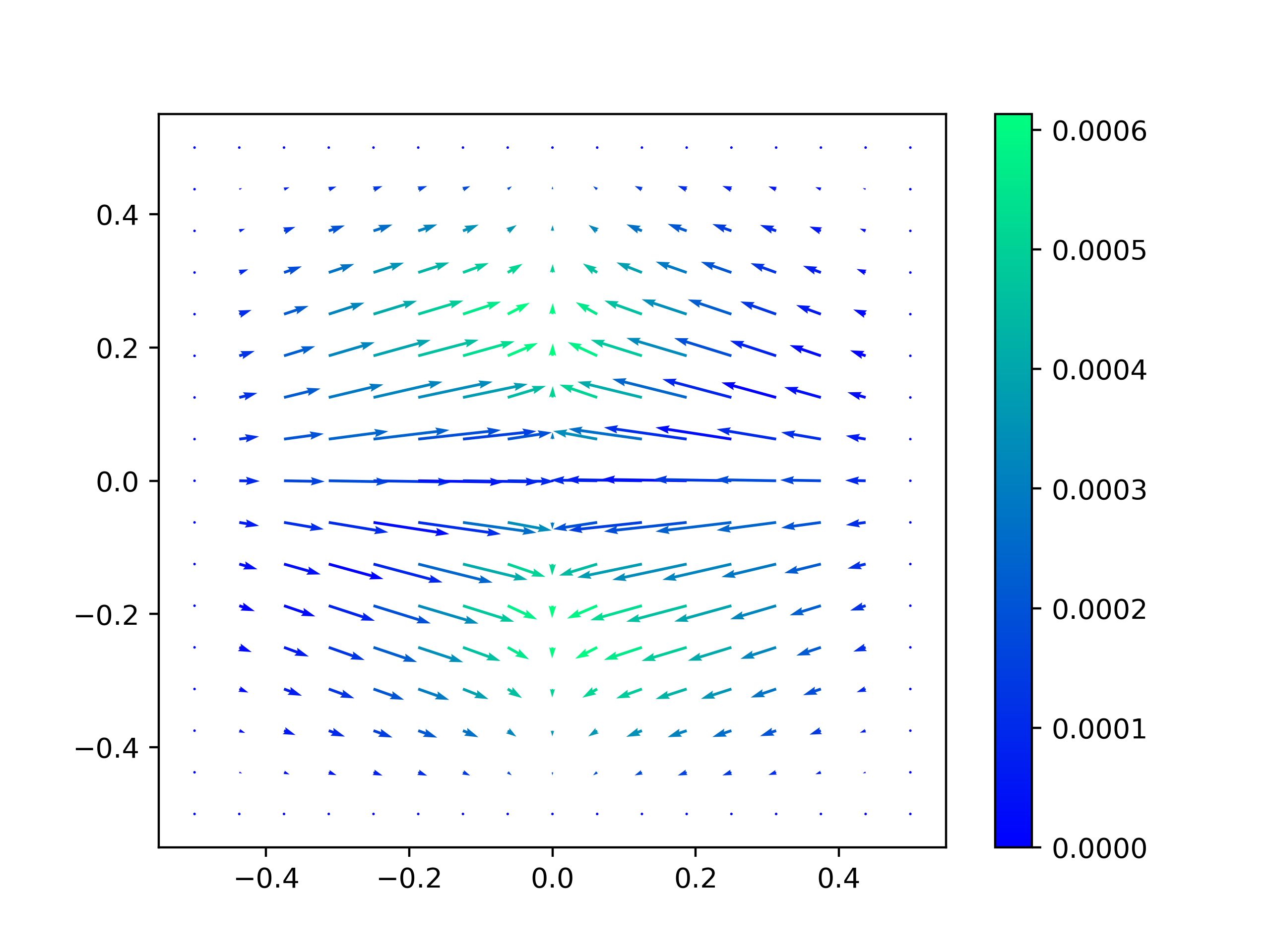}
    \caption{Experiment \ref{experiment_2}, simplified model \eqref{def:simple_model}: Evolution of the director in the plane $x_3=0$ at time $t=0, 0.03, 0.05, 0.1$ (from left to right, from top to bottom), evolution of the energy (bottom left), velocity field in the plane $x_3=0$ at time $t= 0.05$ (bottom right).
    }
    \label{fig:annihilation}
\end{figure}

\begin{figure}[t]
    \centering
    \includegraphics[width= .49 \textwidth]{./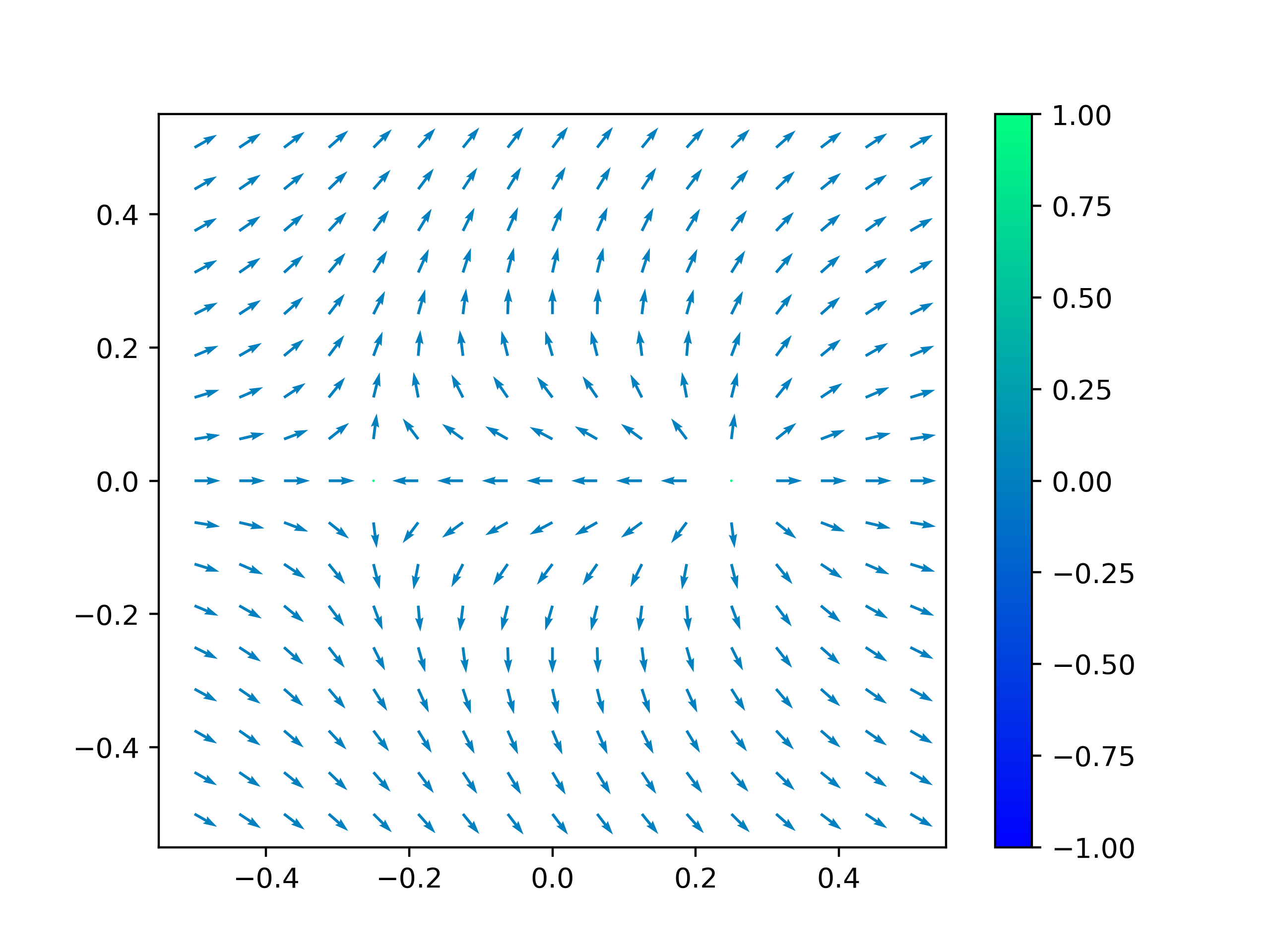}
    \includegraphics[width= .49 \textwidth]{./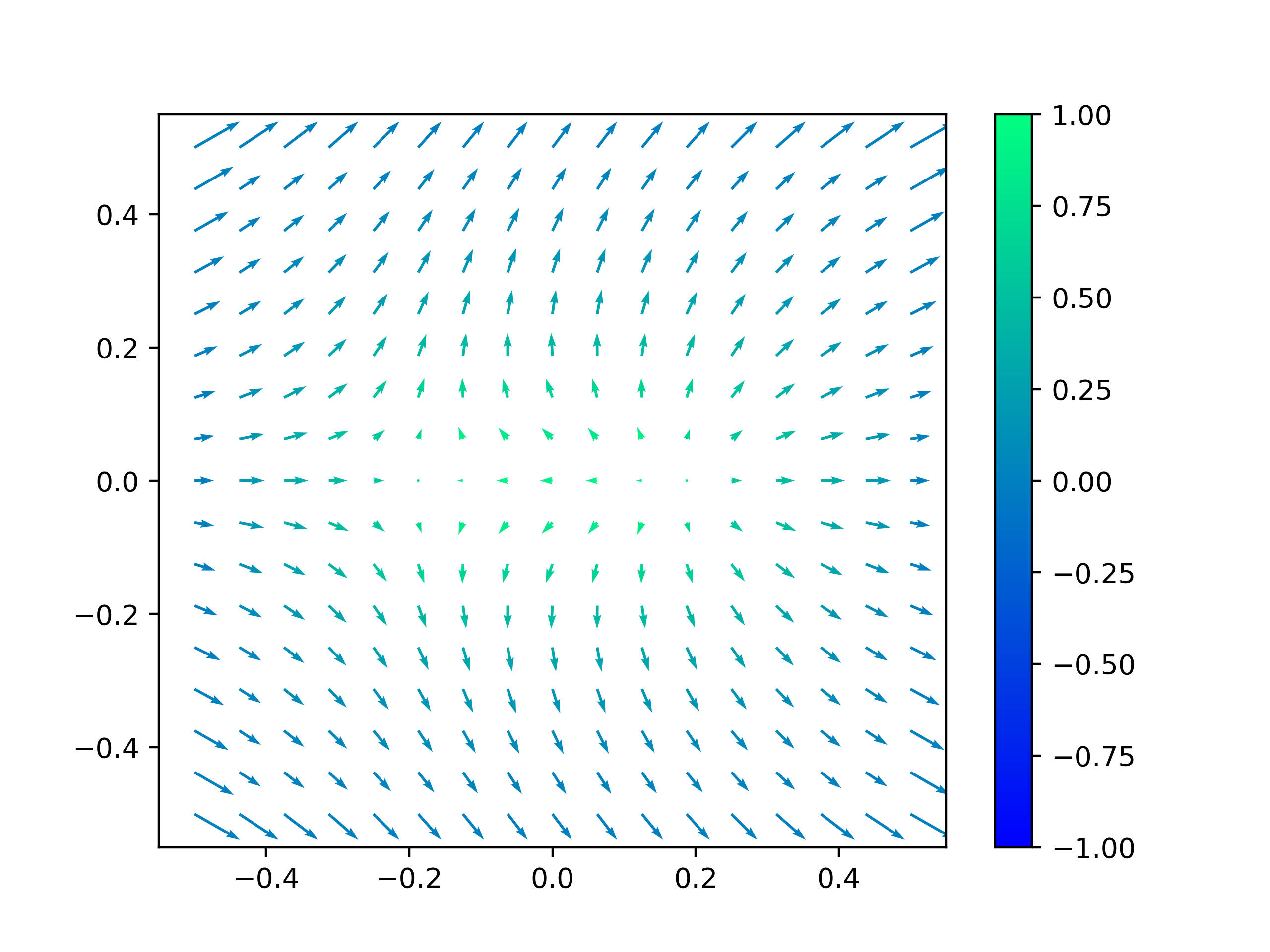}
    \includegraphics[width= .49 \textwidth]{./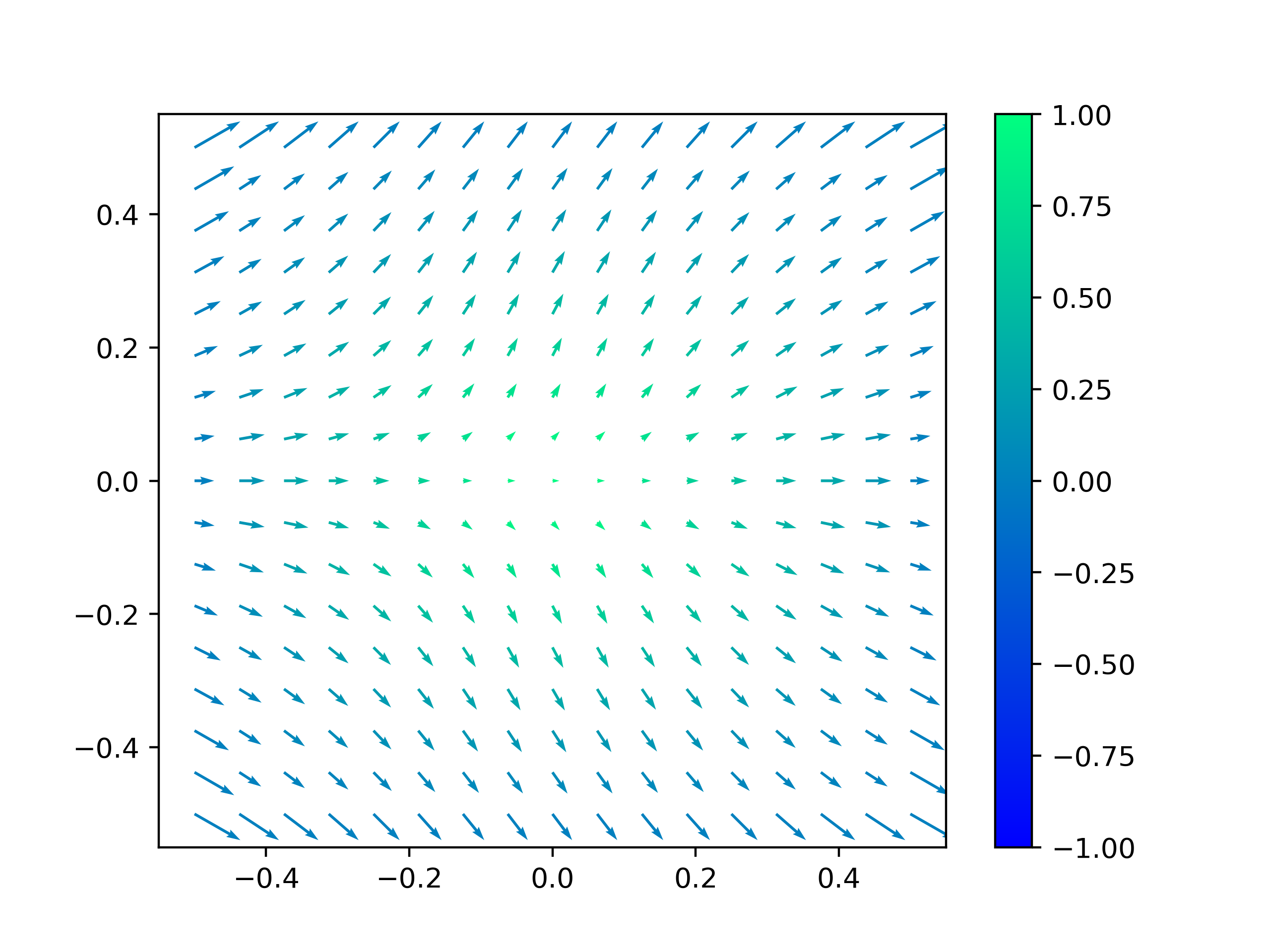}
    \includegraphics[width= .49 \textwidth]{./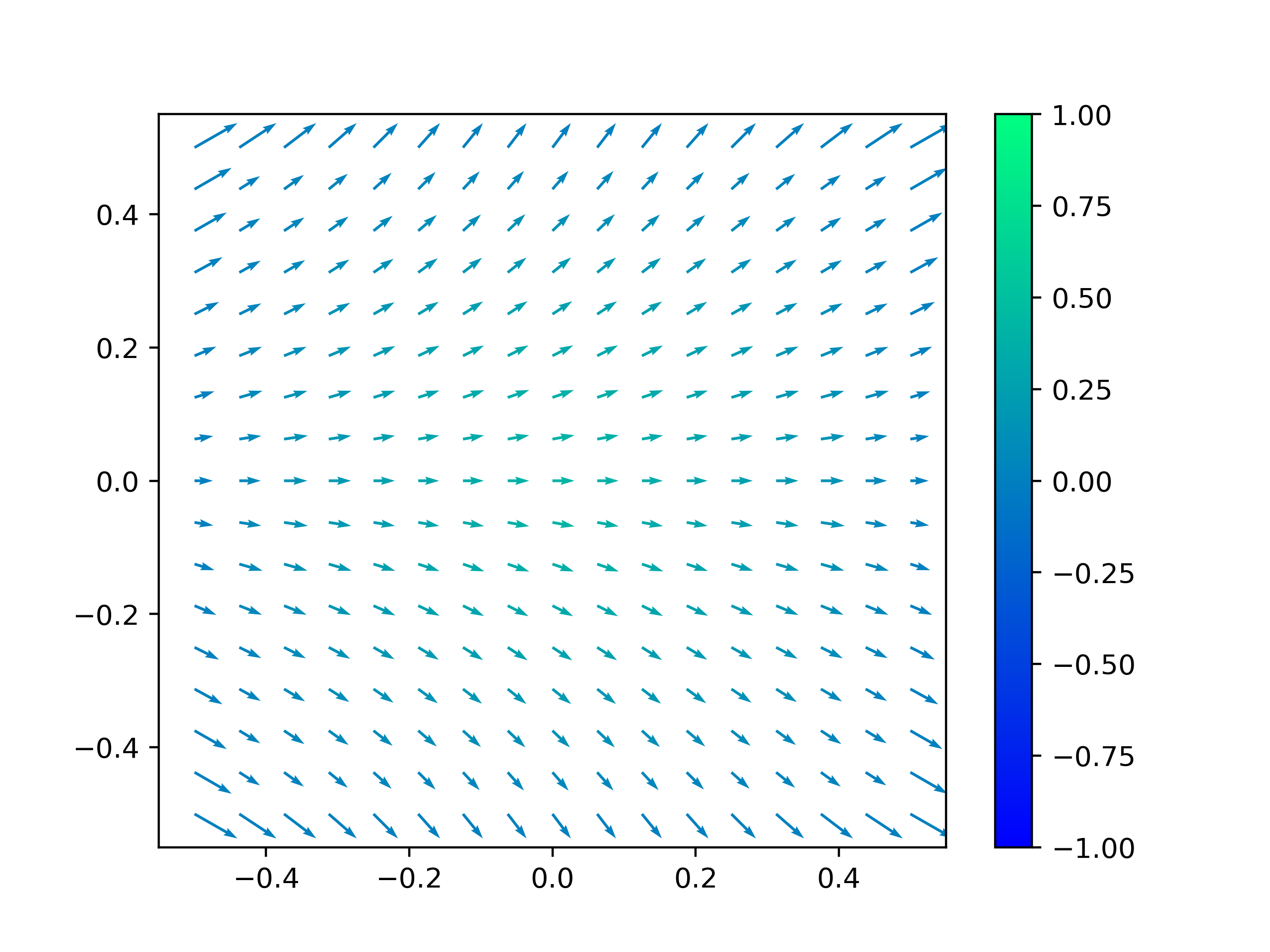}
    \includegraphics[width= .49 \textwidth]{./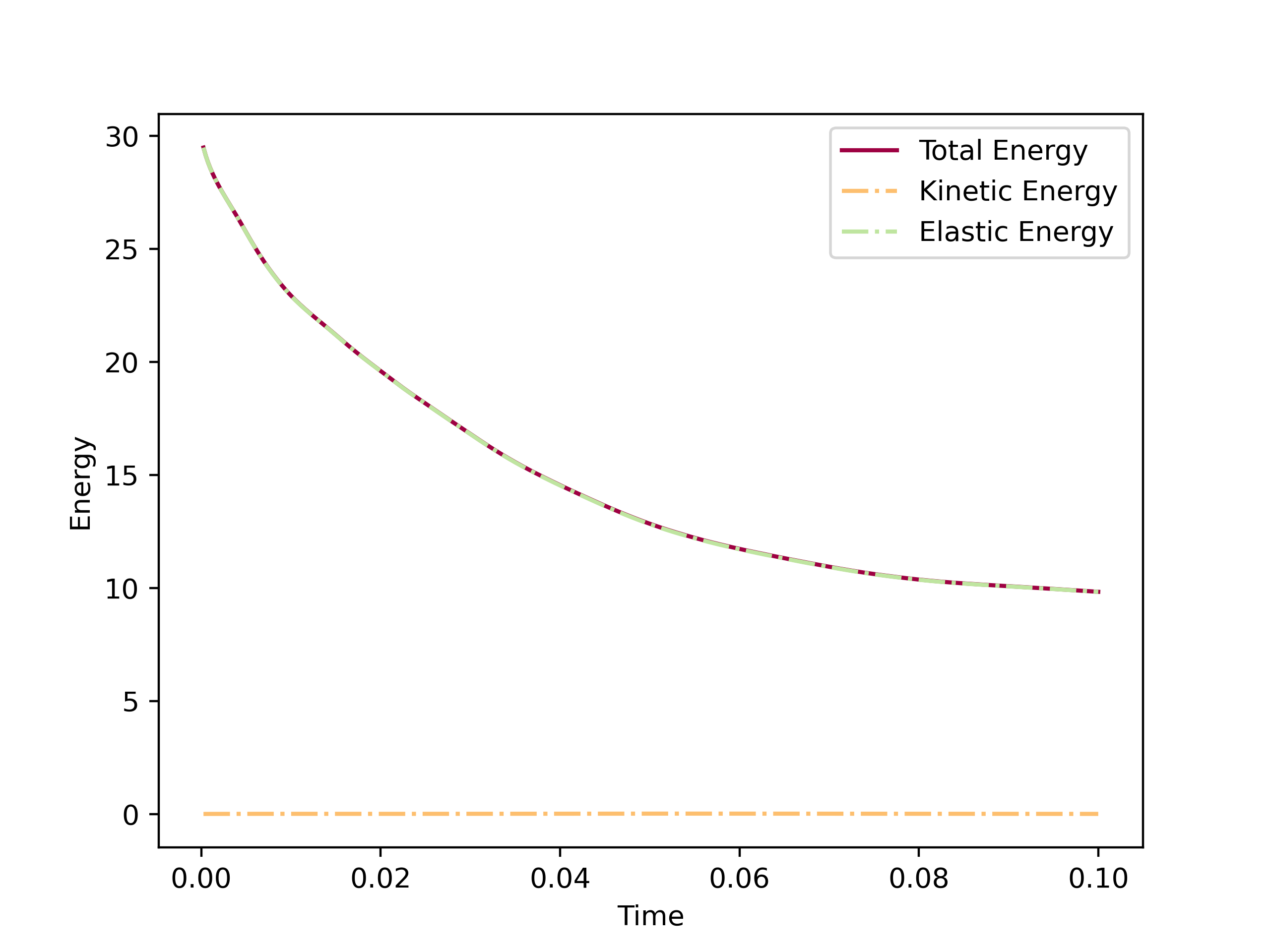}
    \includegraphics[width= .49 \textwidth]{./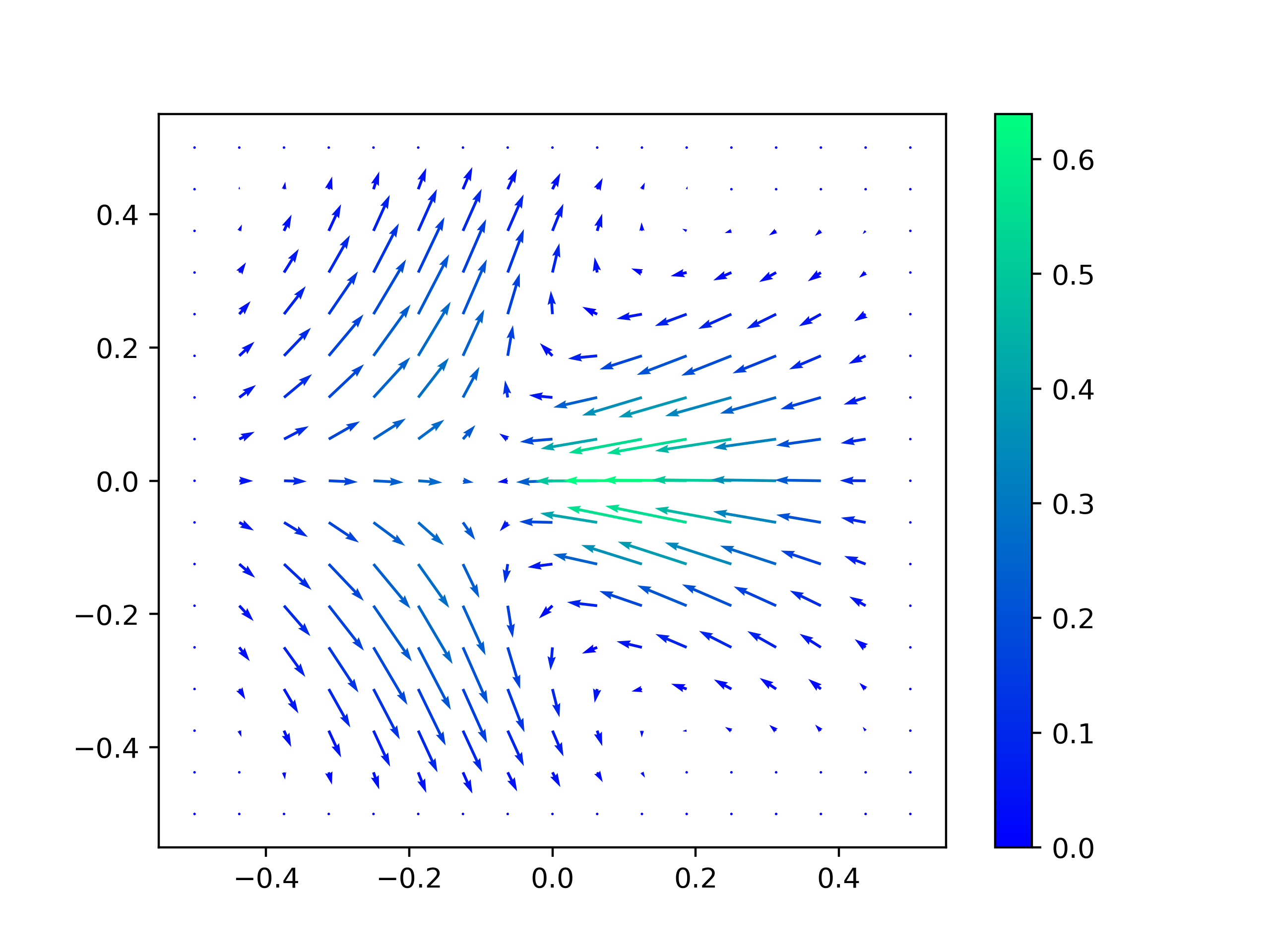}
    \caption{Experiment \ref{experiment_2}, model \eqref{discrete_scheme}: Evolution of the director in the plane $x_3=0$ at time $t=0, 0.03, 0.05, 0.1$ (from left to right, from top to bottom), evolution of the energy (bottom left), velocity field in the plane $x_3=0$ at time $t= 0.05$ (bottom right).
    The colour marks the (absolute) value of the z-component.
    }
    \label{fig:annihilation_general}
\end{figure}
\subsection{Annihilation of two defects in a rotating flow}
The setting of the experiment equals the setting of the one before except for the choice of the initial condition and boundary condition of the velocity.
Instead we choose 
\begin{align*}
    v_0 &= 10 (- x_2, x_1, 0)^T \text{ for all }x \in \Omega, \\
    v &= 10 (- x_2, x_1, 0)^T \text{ for all } x \in \partial \Omega.
\end{align*}
Note that the inhomogeneous Dirichlet boundary conditions for the velocity are a deviation from our so far considered setting such that we can only expect the energy law \eqref{discrete_energy_equ} to hold with a modified right-hand side accounting for the boundary conditions (see Fig. \ref{fig:annihilation_rot}).
In Fig. \ref{fig:annihilation_rot} we can observe that in the simplified model the defects are swirled anticlockwise around the center by the velocity field before annihilation. This is in qualitative agreement with the results obtained by \cite{lasarzik_main, Liu_Walkington} and also holds for the more general model (see Fig. \ref{fig:annihilation_rot_gen}).

\begin{figure}[t]
    \centering
    \includegraphics[width= .49 \textwidth]{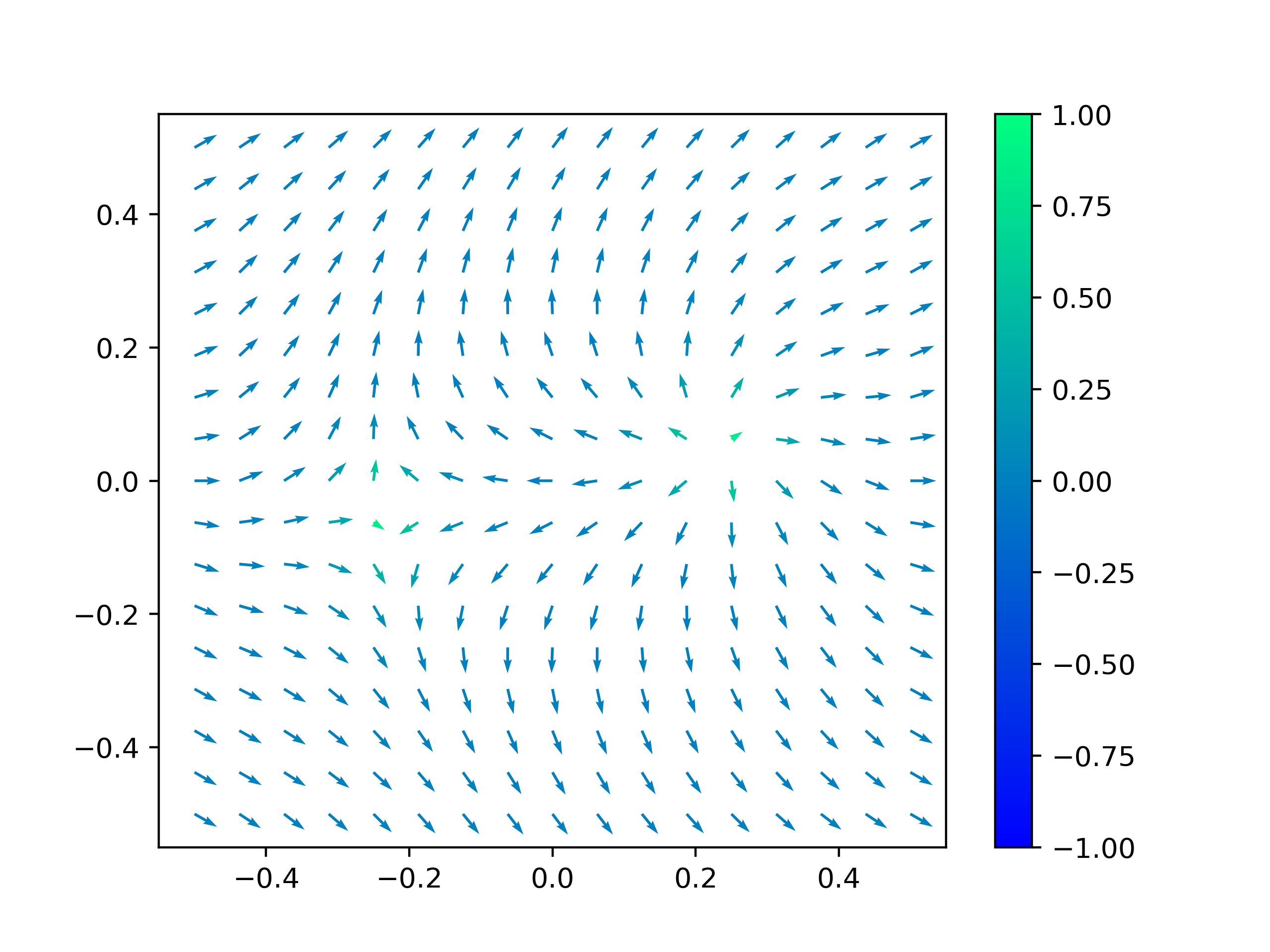}
    \includegraphics[width= .49 \textwidth]{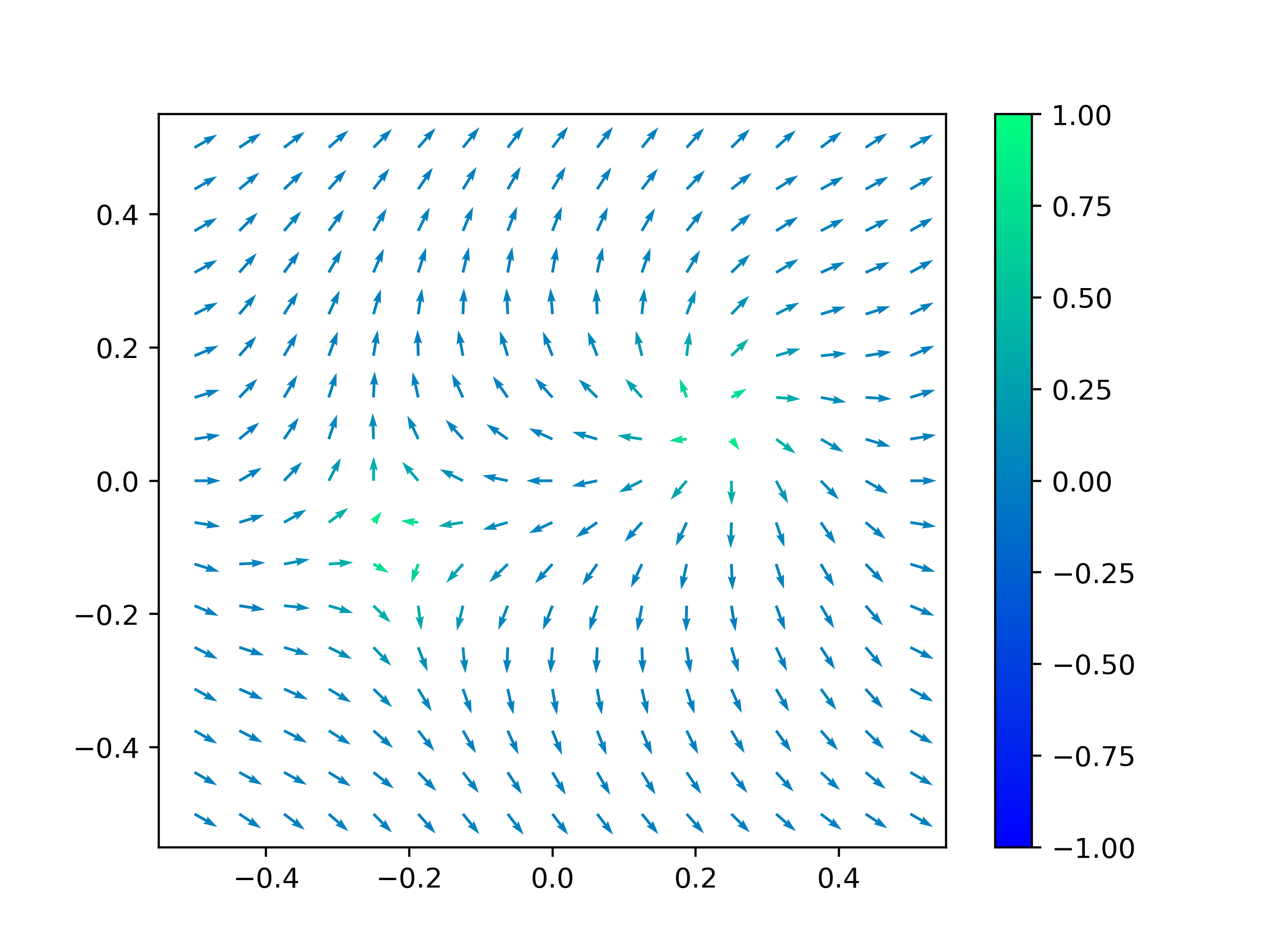}
    \includegraphics[width= .49 \textwidth]{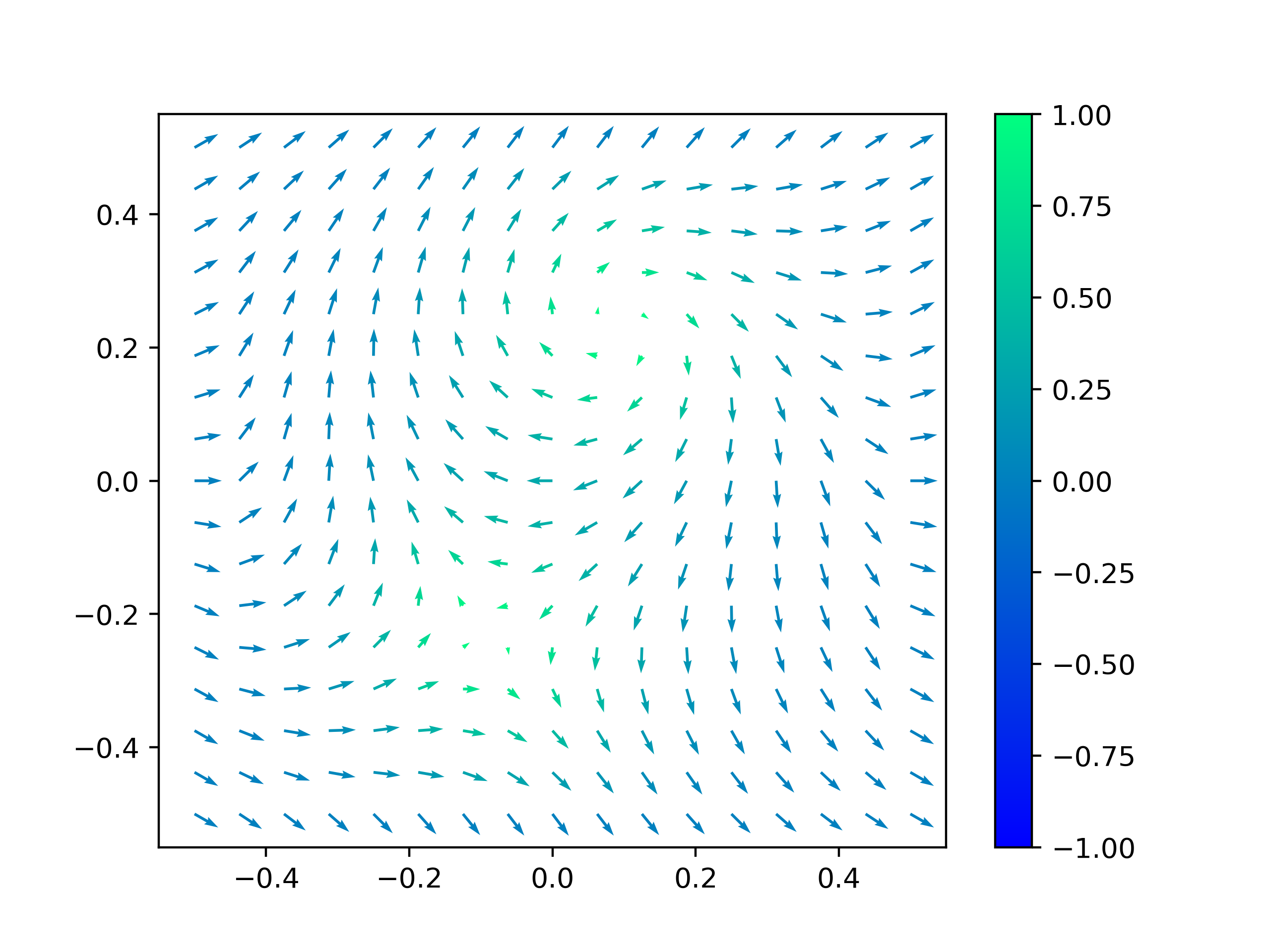}
    \includegraphics[width= .49 \textwidth]{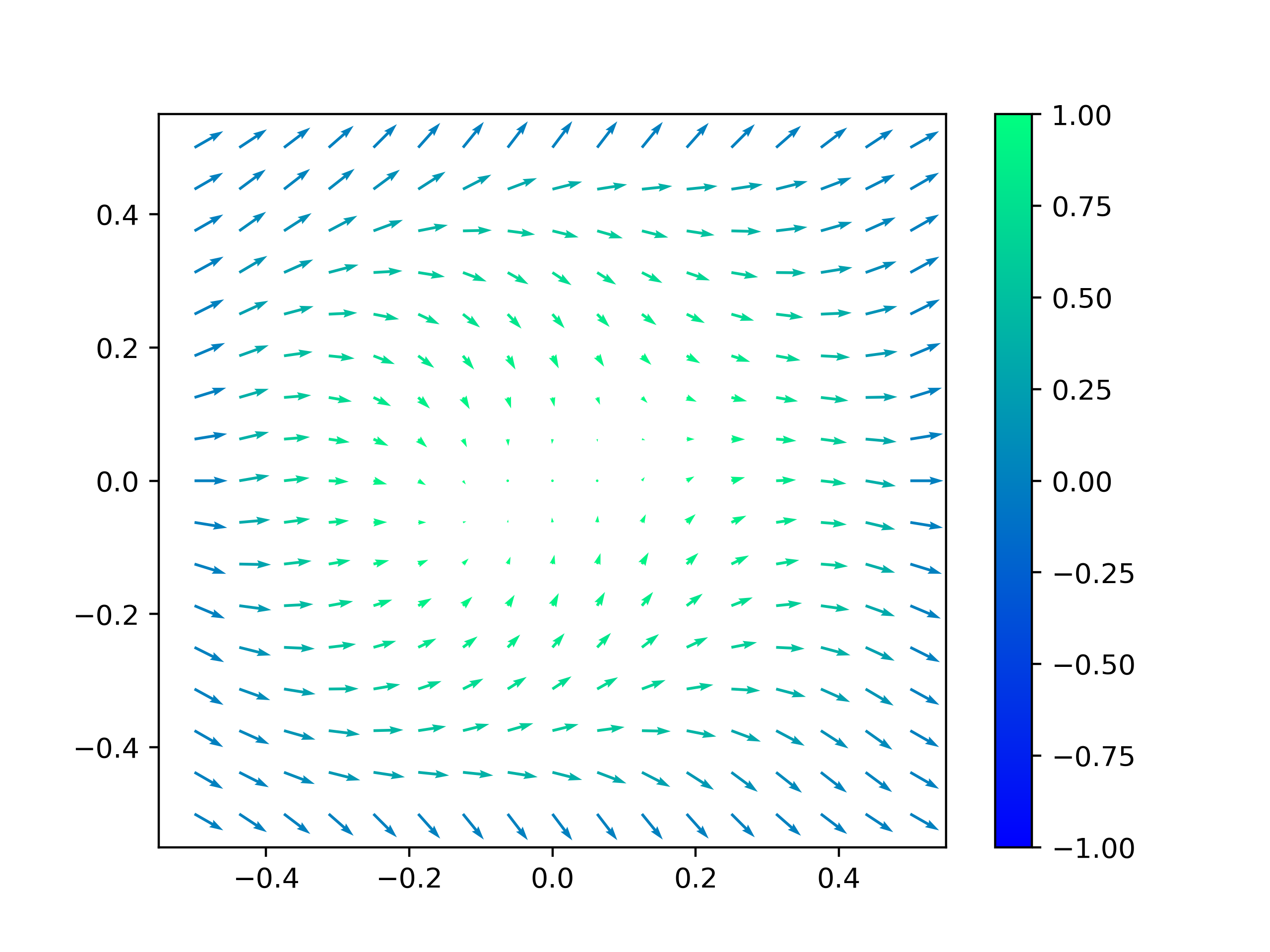}
     \includegraphics[width= .49 \textwidth]{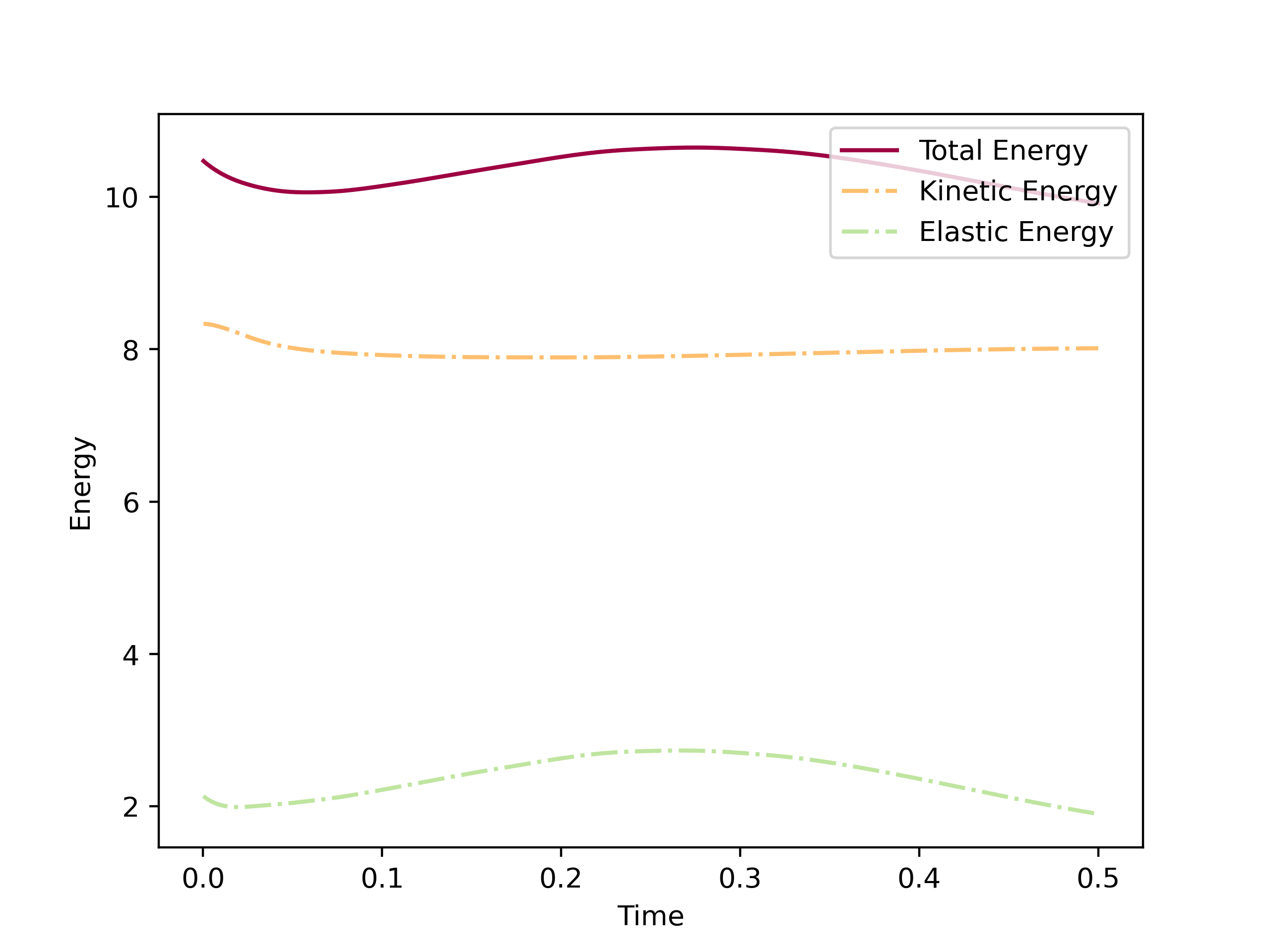}
    \includegraphics[width= .49 \textwidth]{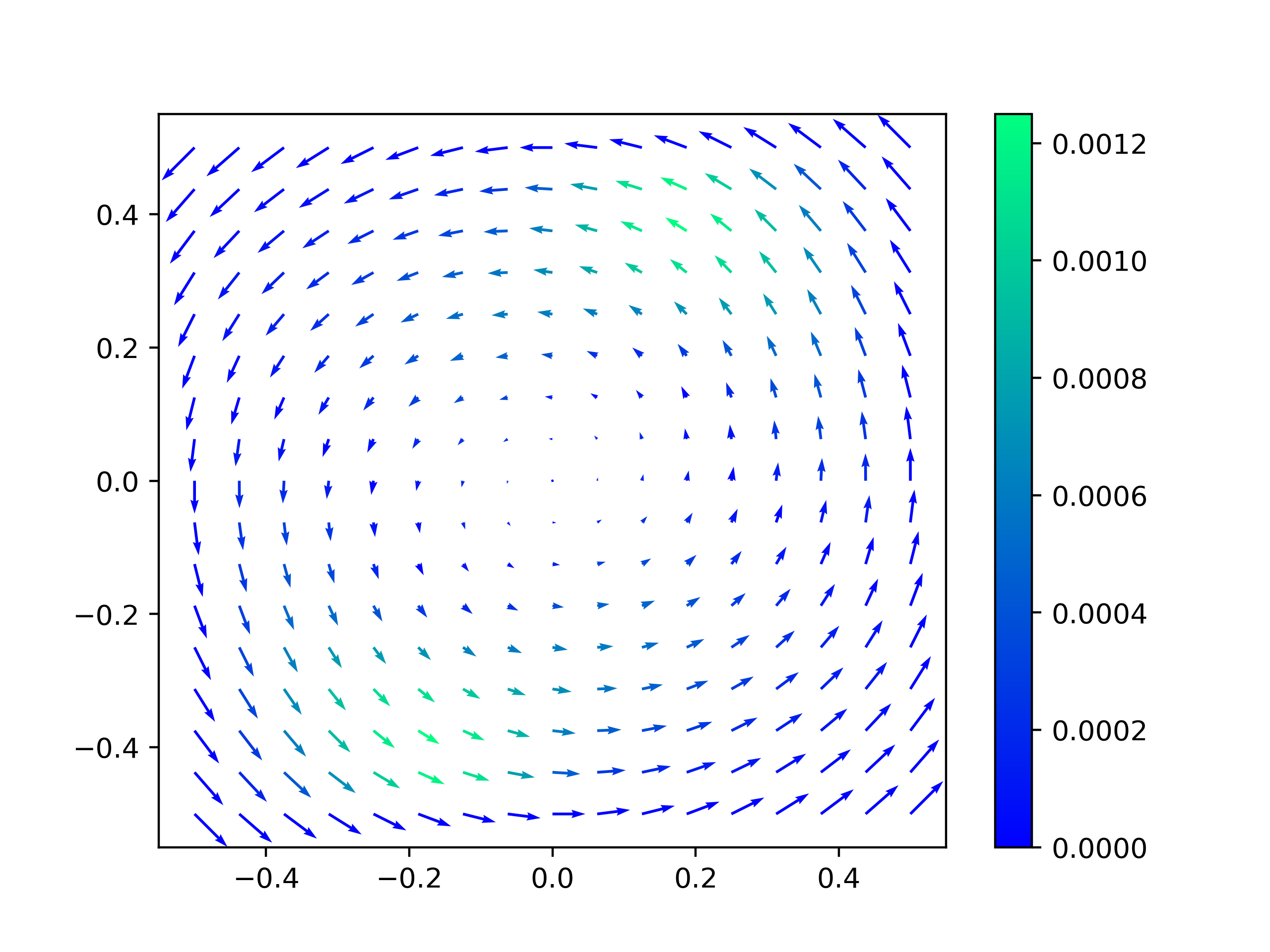}
    \caption{Annihilation in a rotating flow, simplified model \eqref{def:simple_model}: Evolution of the director in the plane $x_3=0$ at time $t= 0.03, 0.05, 0.15, 0.5$, evolution of the energy, velocity field in the plane $x_3=0$ at time $t= 0.15$ (from top left to bottom right).
        The colour marks the (absolute) value of the z-component.
}
    \label{fig:annihilation_rot}
\end{figure}
\begin{figure}[t]
    \centering
    \includegraphics[width= .49 \textwidth]{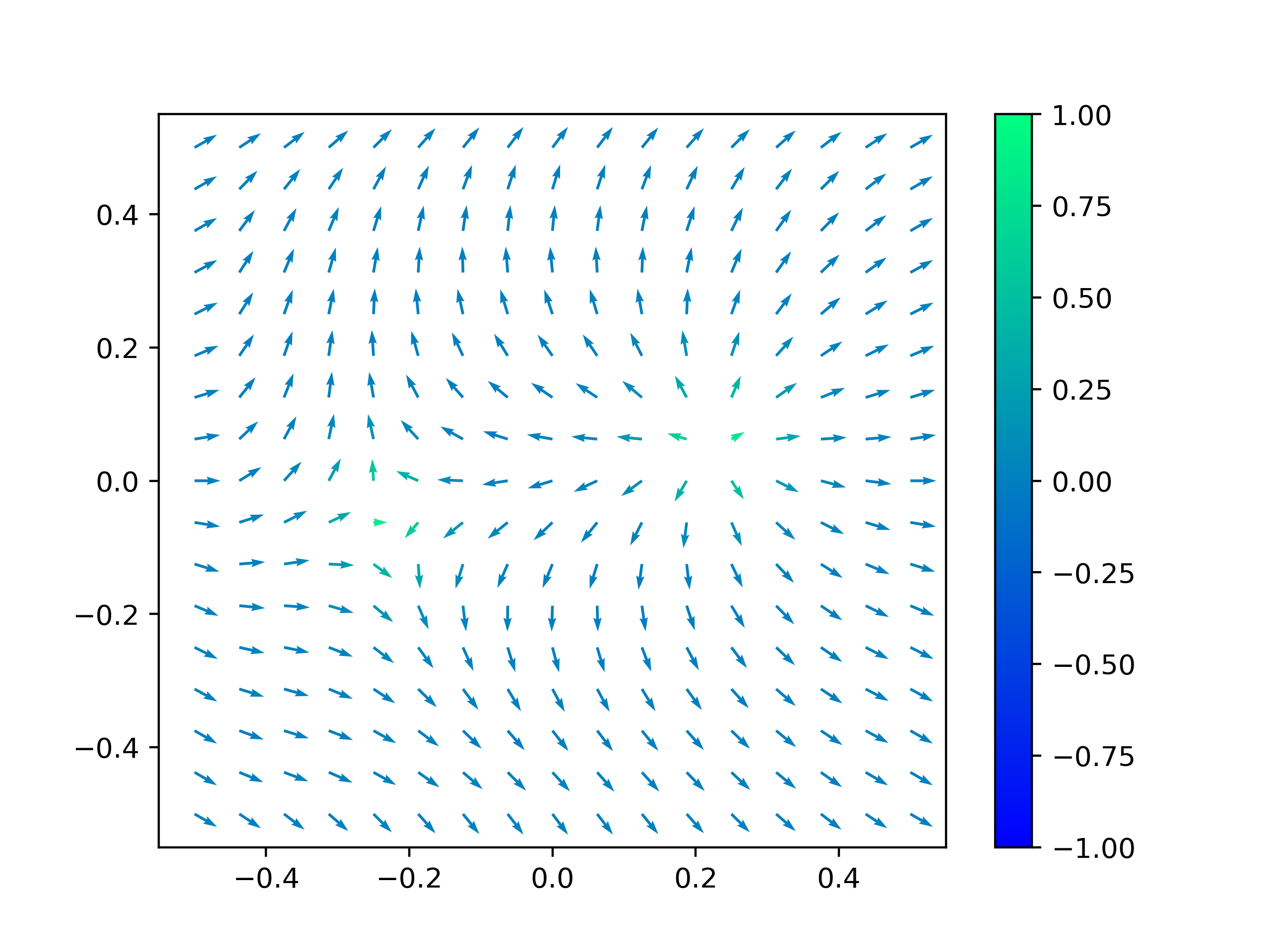}
    \includegraphics[width= .49 \textwidth]{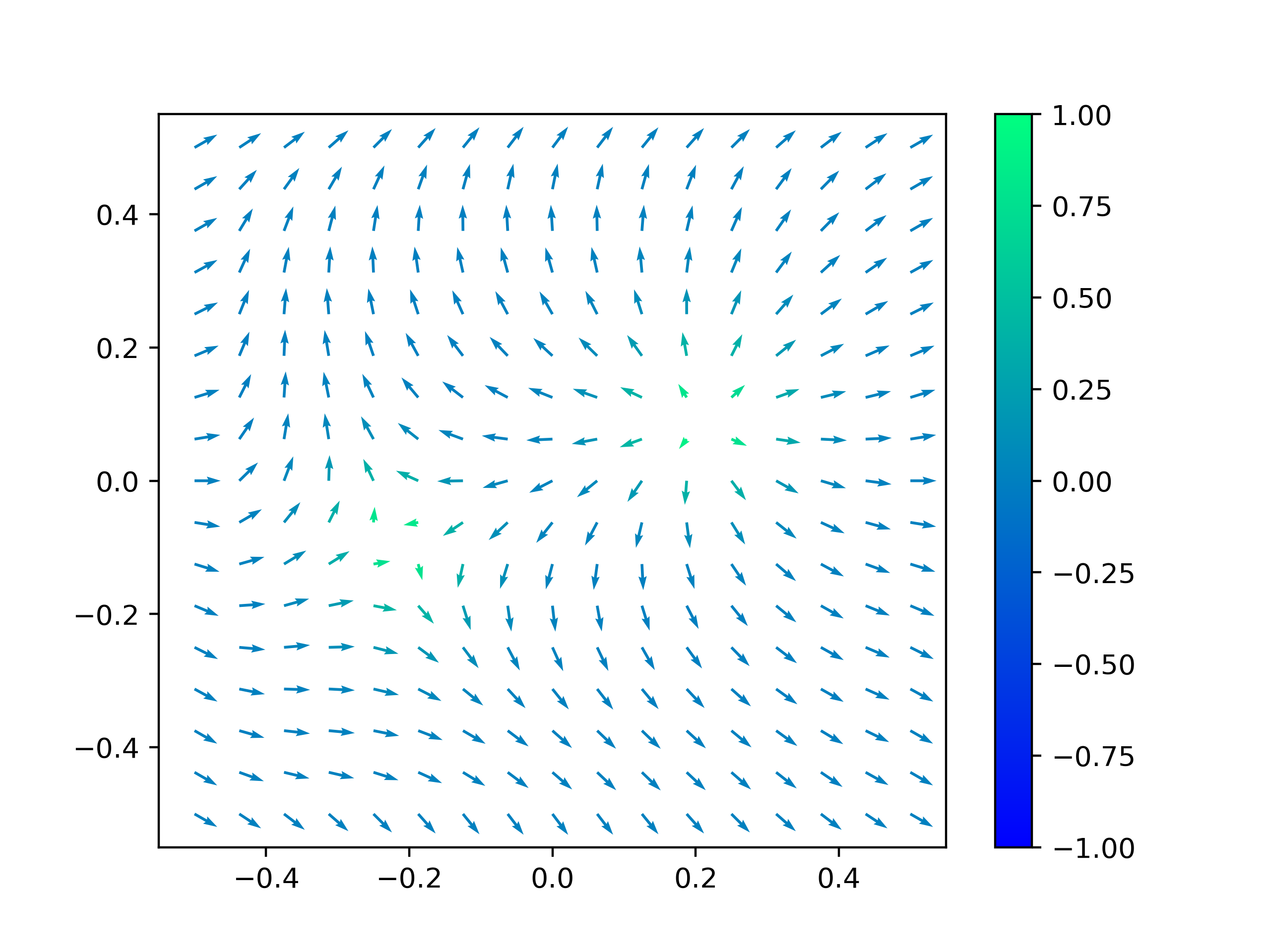}
    \includegraphics[width= .49 \textwidth]{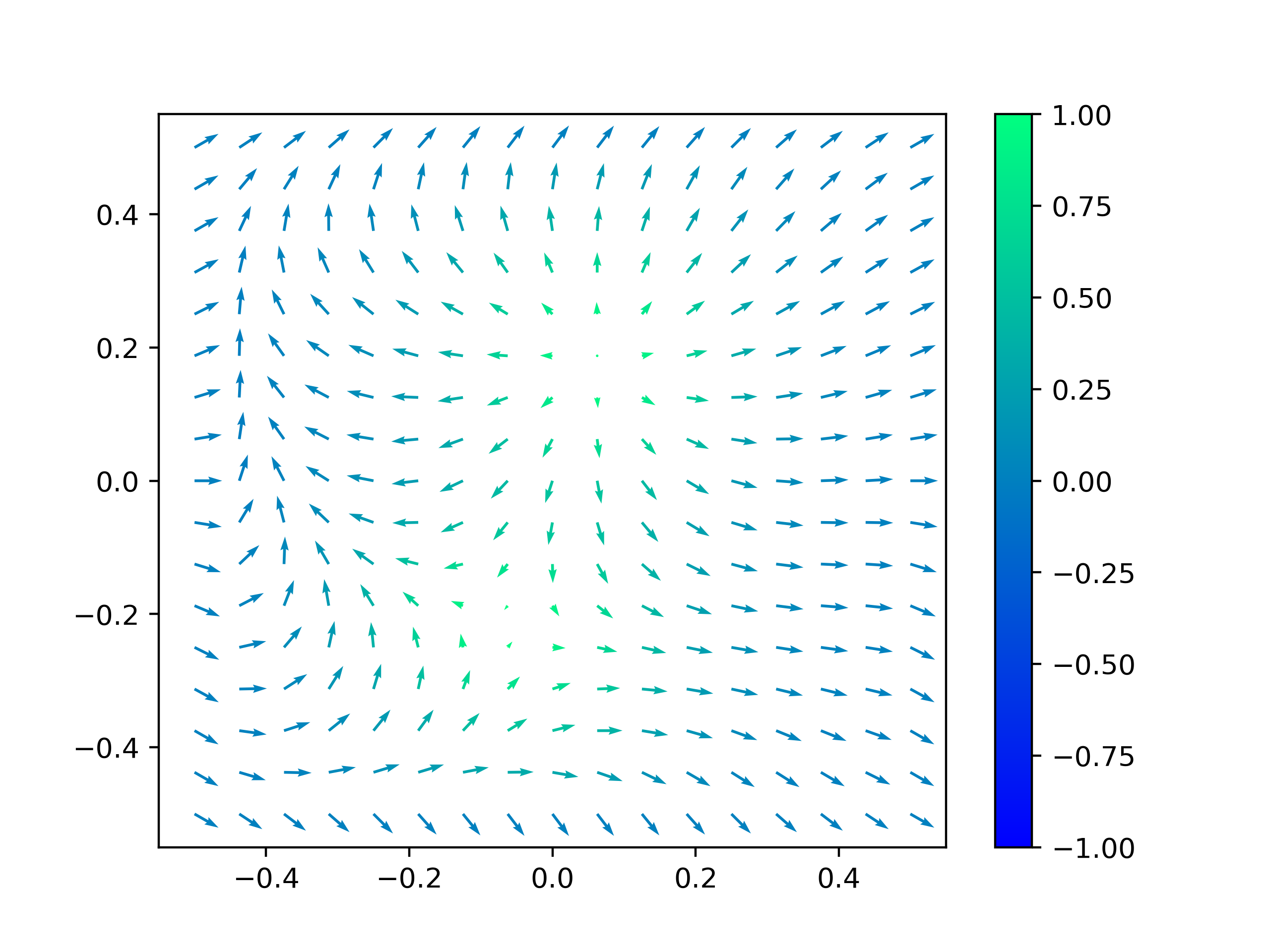}
    \includegraphics[width= .49 \textwidth]{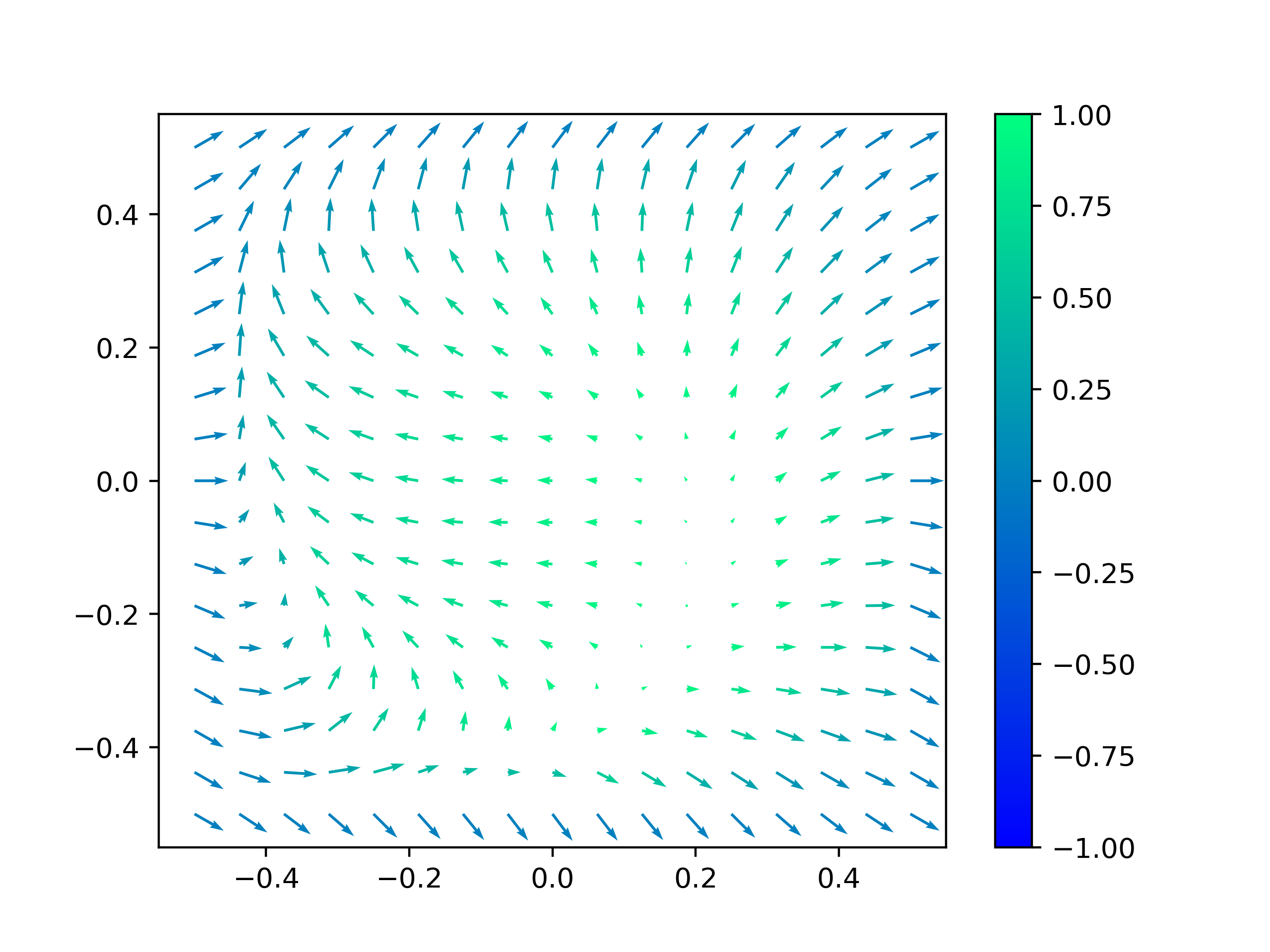}
     \includegraphics[width= .49 \textwidth]{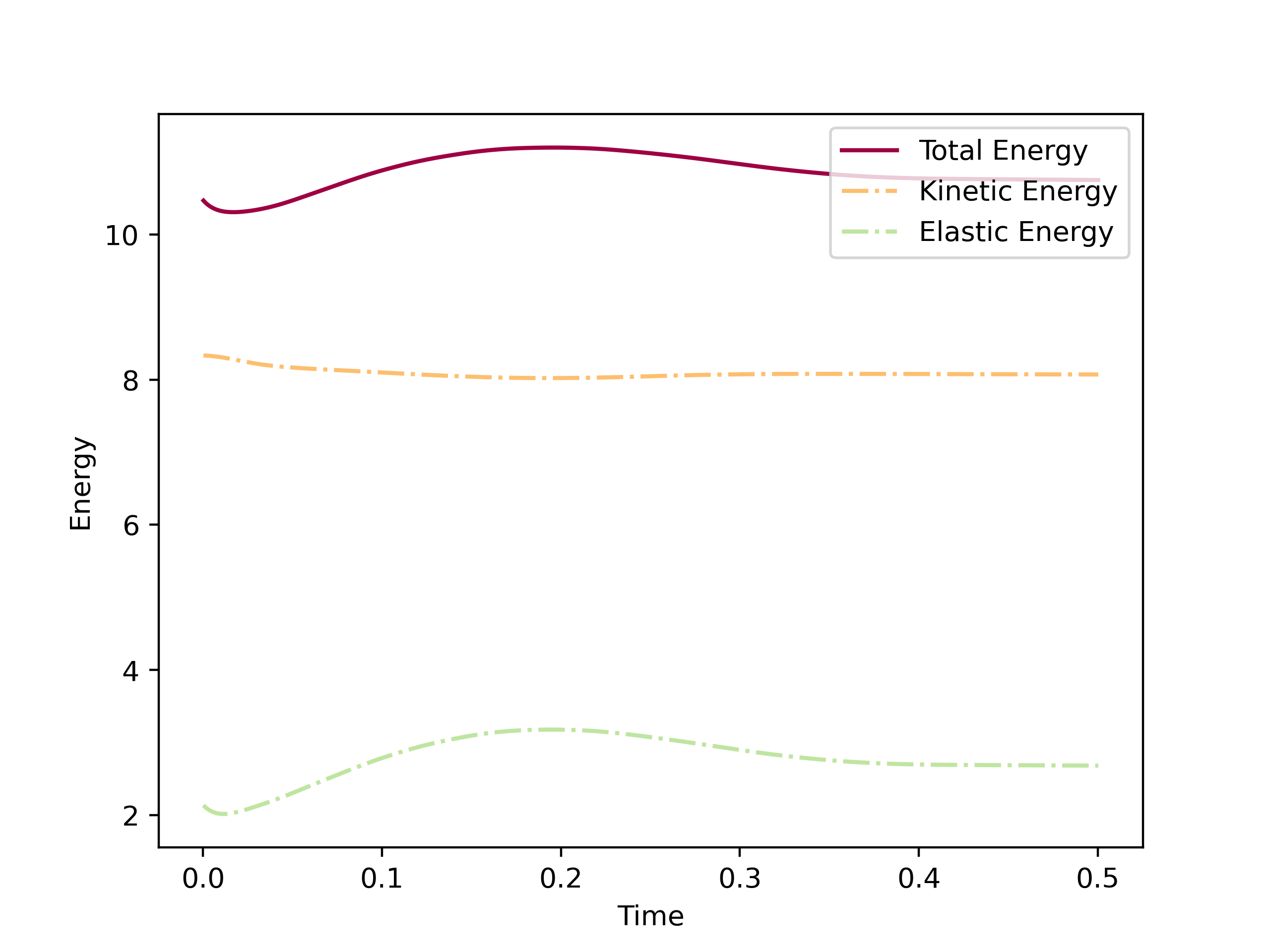}
    \includegraphics[width= .49 \textwidth]{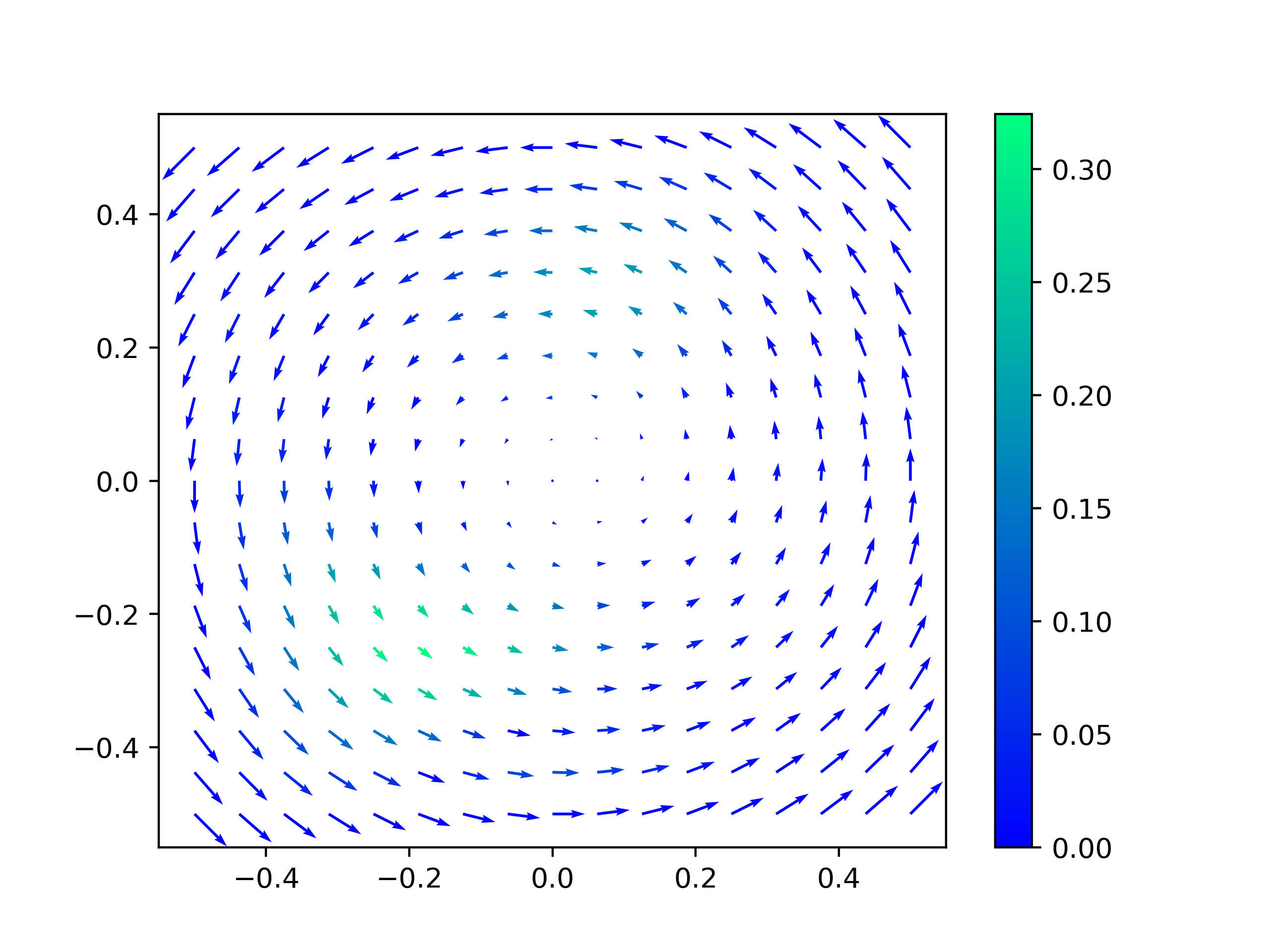}
    \caption{Annihilation in a rotating flow, model \eqref{discrete_scheme}: Evolution of the director in the plane $x_3=0$ at time $t= 0.03, 0.05, 0.15, 0.5$, evolution of the energy, velocity field in the plane $x_3=0$ at time $t= 0.15$ (from top left to bottom right).    The colour marks the (absolute) value of the z-component.
}
    \label{fig:annihilation_rot_gen}
\end{figure}

\section*{Acknowledgements}
The second author acknowledges financial support received in the form of a Ph.D. scholarship from the Friedrich-Naumann-Foundation for Freedom (dt.: Friedrich-Naumann-Stiftung für die Freiheit) with funds from the Federal Ministry of Education and Research (BMBF)  and funding by the Deutsche Forschungsgemeinschaft (DFG, German Research Foundation) under Germany's Excellence Strategy – The Berlin Mathematics Research Center MATH+ (EXC-2046/1, project ID: 390685689). 
This publication is in parts based on the second author's master thesis \cite{thesis_max}.
%
\small

\end{document}